\newtheorem{theorem}{Theorem}[section]
\newtheorem{proposition}[theorem]{Proposition}
\newtheorem{lemma}[theorem]{Lemma}
\newtheorem{remark}[theorem]{Remark}
\numberwithin{equation}{section}
\newcommand \Kcal {\mathcal K}
\newcommand \Fcal{\mathcal{F}}
\newcommand \Hcal {\mathcal H}
\newcommand \Ecal {\mathcal E}
\newcommand \xb {\bar {x}}
\newcommand \delb {\bar {\del}}
\newcommand \Boxt {\widetilde {\Box}}
\newcommand \del \partial
\newcommand \delu {\uline{\del}}
\newcommand \Tu {\uline{T}}
\newcommand \minu{\uline{m}}
\newcommand \Au{\uline{A}}
\newcommand \Psiu{\uline{\Psi}}
\newcommand \Phiu{\uline{\Phi}}
\newcommand \Ec {E_2}
\newcommand \RR{\mathbb{R}}
\newcommand {\vep}{\varepsilon}
\newcommand {\dels}{\slashed{\del}}
\def\hlinew#1{%
  \noalign{\ifnum0=`}\fi\hrule \@height #1 \futurelet
   \reserved@a\@xhline}
\title{Global solutions of nonlinear wave-Klein-Gordon system in two spatial dimensions: A prototype of strong coupling case \footnote{The present work belongs to a research project ``Global stability of quasilinear wave-Klein-Gordon system in $2 + 1$ space-time dimension'' (11601414), supported by NSFC.}}
\author{Yue MA \footnote{School of Mathematics and Statistics, Xi'an Jiaotong University, Xi'an, Shaanxi 710049, P.R. China.\ E-mail: yuemath@xjtu.edu.cn}}
\begin{document}

\maketitle

\begin{abstract}
	In this article we will develop some techniques aimed at the strong couplings in two-dimensional wave-Klein-Gordon system. We distinguish the roles of different type of decay factors and develop a method  which permits us to ``exchange'' one type of decay into the other. Then a global existence result of a model problem is established. We also give a sketch of the Klein-Gordon-Zakharov model system and establish the associate global existence result. 
\end{abstract}

\section{Introduction}
\subsection{The Model problems}
This article belongs to a research project in which we attempt to understand the effects of different quadratic terms coupled in diagonalized wave-Klein-Gordon system in $2+1$ dimensional space-time. In the previous works for example \cite{M2} and \cite{Dong-2020}, we mainly concentrate on the so-called weak coupling cases, i.e., in the wave equation there is no pure Klein-Gordon terms. In the present work we start an investigation on the strong coupling case. We will develop several technical tools and establish the global existence result for the following model system:
\begin{equation}\label{eq-main}
\left\{
\aligned
&\Box u = A_1^{\alpha\beta}\del_{\alpha}u\del_{\beta}u + A_3^{\alpha\beta}\del_{\alpha}u\del_{\beta}v + A_4^{\alpha}v\del_{\alpha}u
\\
&\quad\quad\quad + B_2^{\alpha\beta}\del_{\alpha}v\del_{\beta}v + B_3^{\alpha}v\del_{\alpha}v + K_2v^2,
\\
&\Box v + c^2v = A_5^{\alpha\beta}\del_{\alpha}u\del_{\beta}u.
\endaligned
\right.
\end{equation}
In \eqref{eq-main}, we remark the general strong coupling terms $B_2^{\alpha\beta}\del_{\alpha}v\del_{\beta}v + B_3^{\alpha}v\del_{\alpha}v + K_2v^2$. The quadratic form $A_1, A_3$ and $A_5$ are supposed to be null. The rest are constant-coefficient (multi-)linear forms. In fact in the wave equation we have included all possible quadratic semi-linear terms on $(\del u,\del v, v)$. Counterintuitively, quasi-linear terms in wave equation are much easier that semi-linear ones due to the so called Hessian structure which will be analyzed when we regard \eqref{eq1-Zakharov} ,see in detail  below. So we exclude them in \eqref{eq-main}. 

In the mean time, we will show an other application of these techniques which is
the following model system formulated form the Klein-Gordon-Zakharov system in $\RR^{2+1}$ introduced and studied in \cite{Dong-2020-2}:
\begin{equation}\label{eq1-Zakharov}
\left\{
\aligned
&\Box E + E = E\Delta u ,
\\
&\Box u = |E|^2,
\endaligned
\right.
\end{equation} 
where $u: \RR^{2+1}\rightarrow \RR$ a scalar and $E: \RR^{2+1}\rightarrow \RR^2$ a vector. We will give an alternative approach to the global existence of this system.

For clarity the initial data are supposed to be compactly supported on the initial hyperboloid $\Hcal_2$ or equivalently, on $\{t=2\}$. This is not an essential restriction because with the so-called Euclidean-hyperboloidal foliation (see for example \cite{M-2018} for an one-dimensional case), the argument here can be easily generalized to non-compactly-supported initial data with sufficient spatial decreasing rate. 

As explained in many existing works, the problem of global existence of wave-Klein-Gordon system is more delicate in $2+1$ dimension than in $3+1$ dimensional case because of the slow decay rate of both wave and Klein-Gordon equations in lower dimension. We recall \cite{Stingo-2018}, \cite{Stingo-2019} for the methods based on Fourier analysis (which are in non-diagonalized quasi-linear case) and \cite{Dong-2020}, \cite{Dong-2020-2} for the analysis in physical space-time. 
It also worth to mention the following results \cite{Godin-1993}, \cite{A1}, \cite{A2}, \cite{Cai-2018}, \cite{Zha-2019}, \cite{Hou-2020} on wave equations in $\RR^{2+1}$ and \cite{Dfx}, \cite{KS-2011} on Klein-Gordon equations in $\RR^{2+1}$ dimension.

The main challenge of \eqref{eq-main} comes from the insufficiency of the so-called principle decay, which is due to the strong coupling terms and the interaction terms $\del_{\alpha}u\del_{\beta}v, v\del_{\alpha}u$ coupled in wave equation. The objective of the present work is mainly concentrated on this difficulty. We will give a detailed explanation in the coming two subsections.

\subsection{Strong couplings v.s. weak couplings}
The systems with strong coupling arise naturally in many physical or geometrical context. For example the Einstein-Klein-Gordon system
\begin{equation}
\aligned
&\Boxt_g g_{\alpha\beta} = F_{\alpha\beta}(g;\del g,\del g) + T_1(\del\phi,\del\phi) + T_2(\phi,\phi),
\\
&\Boxt_g\phi + c^2\phi = 0
\endaligned
\end{equation} 
where $T_1,T_2$ are quadratic forms. The wave map system formulated in \cite{Ab-2019}
\begin{equation}\label{eq5-08-09-2020}
\aligned
&\Box u = -2\sum_{k=2}^n\phi^k \del_1\phi^k + \text{cubic terms}
\\
&\Box \phi^k + \phi^k = 2\phi^k \del_1u + \text{cubic terms}, \quad k=2,\cdots n.
\endaligned
\end{equation}
as well as the already mentioned Klein-Gordon-Zakharov model system \eqref{eq1-Zakharov}.

We will explain the challenge comes form the strong coupling. For the convenience of discussion we recall some notation. We are working in $\RR^{2+1} = \{(t,x)|t\in\RR, x\in\RR^2\}$ and recall that
$$
\Kcal = \{t>r+1\},\quad s = \sqrt{t^2-r^2} = \sqrt{t^2-|x|^2}, \quad \Hcal_s = \{(t,x)| t = \sqrt{s^2+r^2}\}.
$$
For a wave-Klein-Gordon system with unit propagation speed, if the initial data are posed on $\{t=2\}$ with support contained in $\{|x|<1\}$, then the solution is supported in $\Kcal$. So we restrict ourselves in $\Kcal$.  More details on hyperboloidal foliation can be found in \cite{LM1}.

Return to the discussion on strong coupling. In dimension $3+1$, the strong coupling terms are not critical because of the decay enjoyed by the Klein-Gordon component:
\begin{equation}
|\del v| + |v|\simeq  (s/t)^{3/2}s^{-3/2 + \delta}
\end{equation} 
where $\delta$ measure the increasing rate of its standard energy on hyperboloids. For clarity we call the factor $s^{-3/2+\delta}$  the {\sl principle decay} and the factor $(s/t)^{3/2}$ the {\sl conical decay}. The above decay is integrable with respect to $s$ (in $\Kcal, t^{-1/2} < s/t \leq 1$), so the strong coupling terms always enjoy integrable $L^2$ bounds.

However in $2+1$ dimensional case the role of these terms change dramatically. Even supposing that $v$ enjoys uniform standard energy bounds, one can only obtain the following decay (via Klainerman-Sobolev inequality):
\begin{equation}\label{eq1-13-08-2020}
|\del v| + |v| \simeq (s/t)s^{-1}
\end{equation}
which is {\sl not} integrable. 

What is even worse is that, the strong couplings destroy ``completely'' the conformal invariance of the wave equation in \eqref{eq-main}.  More precisely, consider the term $\del_{\alpha}v\del_{\beta}v$ and regard the conformal energy estimate \eqref{eq8-15-06-2020}, we need to integrate $\|s\del_{\alpha}v\del_{\beta}v\|_{L^2(\Hcal_s)}$ with respect to $s$. This term do not decrease even if we suppose that $v$ enjoys uniform standard energy bound:
$$
\|s\del_{\alpha}v\del_{\beta}v\|_{L^2(\Hcal_s)}\simeq 1.
$$
This leads to at least a $s^{+1}$ increasing rate of conformal energy.


In contrast, in the weak coupling case the quadratic semi-linear terms containing at least one factor of wave component (or its derivatives). These terms  are more friendly because the wave component can be expected to enjoy better decay and $L^2$ bounds due to its conformal energy bounds. However, one can not expect such bounds on Klein-Gordon component. To be more precise, recall the following bounds due to Klainerman-Sobolev inequality:
\begin{equation}
(s/t)|\del u| + (s/t)^{-1}|\delu_a u|\simeq s^{-2}\Ecal_2^N(s,u)^{1/2},
\end{equation}
\begin{equation}
\|(s/t)^2s\del_{\alpha} u\|_{L^2(\Hcal_s)}\simeq \Ecal_2^N(s,u)^{1/2}.
\end{equation}
where $\Ecal_2^N(s,u)$ represents the $N-$order conformal energy defined on hyperboloid $\Hcal_s$ (see in detail in Subsection \ref{subsec-conformal}). For example, let us consider the mixed term $\del_{\alpha}u\del_{\beta}v$ coupled in wave equation and suppose that $v$ enjoys uniform standard energy bound. Then recall \eqref{eq1-13-08-2020} one obtains
$$
s\del_{\alpha}u\del_{\beta}v \simeq (s/t)^{-2}s^{-1} (s/t)^2s|\del_{\alpha} u|
$$
where $ (s/t)^2s|\del_{\alpha} u|$ can be controlled by conformal energy. If we demand null condition on the coefficients of this term, there will be an additional conical decay $(s/t)^2$  which will offset the $(s/t)^{-2}$. Then this bound will be sufficient to recover a slowly increasing conformal energy bound (see for example \cite{M4}). In an other word, in weak coupling case the wave component can be expected to enjoy slowly increasing conformal energy bound, which seems to be impossible in strong coupling case. This is the fundamental difference between the strong and weak coupling cases.  

\subsection{Principle decay v.s. conical decay}
In the above discussion we have applied the term principle decay and conical decay. Due to their importance,
let us give more detailed explanation on their different roles.  In general we write the decay of a term in following form,
\begin{equation}
(s/t)^{\beta}s^{-\alpha}.
\end{equation}
The principle decay determines how fast the function decreases far from the light cone $\del\Kcal = \{r=t-1\}$. It also measures the homogeneity of the solution.  The conical decay describes how much additional decay the solution enjoys near the light-cone. However when $\beta<0$, in order to offset this increasing rate near light-cone, one needs to pay principle decay. Remark that $(s/t)^{-1}\leq s$ in $\Kcal$, then one has
$$
(s/t)^{-\beta}s^{-\alpha}\leq s^{-\alpha + \beta},\quad \beta\geq 0. 
$$
We denote by $-\alpha + [\beta]^-$ the {\sl total decay}, where $[\beta]^-$ denotes the negative part of $\beta$, which is $ - \beta$ when $\beta\leq 0$ and $0$ when $\beta >0$. 
Remark that the $L^2$ norm of the gradient of wave component can not be bounded directly by standard energy on hyperboloid (see in detail \eqref{density-standard}). We need to pay a conical decay $(s/t)$. For example, to bound the $L^2$ norm of $\del_{\alpha} u \del_{\beta} v$, one needs to make the following calculation:
\begin{equation}\label{eq1-22-08-2020}
\|\del_{\alpha}u\del_{\beta}v\|_{L^2(\Hcal_s)}\leq C\|(s/t)\del_{\alpha} u\|_{L^2(\Hcal_s)}\|(s/t)^{-1}\del_{\beta}v\|_{L^{\infty}(\Hcal_s)}
\end{equation}
and demand how much total decay the term $\del_{\beta} v$ enjoy. When this decay is integrable with respect to $s$, one concludes that $\|\del_{\alpha}u\del_{\beta}v\|_{L^2(\Hcal_s)}$ does not blow up the standard energy. The main difficulty comes when this decay is not integarble and it can be classified into two types. One is the lack of conical decay and the other is the lack of principle decay. Both may lead to insufficiency of total decay and even blow-up in finite time. We show some typical examples.  

A first example is the following semi-linear wave equation:
\begin{equation}
\Box u = (\del_t u)^2
\end{equation}
in $\RR^{3+1}$. It is known that all non-zero initial data leads to finite time blow-up (see \cite{John-1981}). This can be observed through \eqref{eq1-22-08-2020}. In fact even if $u$ enjoys uniformly standard energy bound (one can not demand more because for free linear wave equation it is conserved), $(s/t)^{-1}|\del_t u|\sim (s/t)^{-1}(s/t)^{1/2}s^{-3/2}\sim (s/t)^{-1/2}s^{-3/2}$. The principle decay is integrable but the lack of conical decay will offset the principle decay by $s^{1/2}$ and make the total decay non-integrable. 

The second example is the situation in \cite{Kl1} in $\RR^{3+1}$ where one regarded the null quadratic term of wave component $N^{\alpha\beta}\del_{\alpha}u\del_{\beta}u$. With the above observation, we also arrive at \eqref{eq1-22-08-2020}. However this time the null condition supplies a supplementary conical decay $(s/t)^2$ (see in detail \eqref{eq13-10-06-2020}) which makes $\|N^{\alpha\beta}\del_{\alpha}u\del_{\beta}u\|_{L^2(\Hcal_s)}$ integrable. The role of classical null conditions is that they supply additional conical decay.

In some more recent works more delicate techniques are developed.  For example in the situation of \cite{LR2010},  the standard energies only enjoy slowly increasing energy bounds. By Klainerman-Sobolev inequality the wave component satisfies 
\begin{equation}\label{eq1-11-08-2020}
|\del_{\alpha} u|\simeq (s/t)^{-1/2}s^{-3/2+\delta}.
\end{equation}
However in this case the null condition is no longer valid. This is the case of lack of conical decay. The sharp decay estimate in \cite{LR2010} (integration along characteristics) and many other techniques are in fact an exchange of principle decay into conical decay. Remark that the principle decay is sufficient and has a margin (between $-3/2+\delta$ and $-1$), or in another word, the insufficiency of total decay only occurs in the region near light-cone. The sharp decay estimate in \cite{LR2010} scarifies some principle decay to recover the insufficiency of conical decay, and arrive at $|\del u|\simeq (s/t)s^{-1}$, i.e., we lose some principle decay of order $(-1/2 + \delta)$ and recover a conical decay of order  $(3/2)$). Of course, some delicate structures of Einstein equation are also applied for this improvement. Many systems in dimension $3+1$ enjoys the above property because of \eqref{eq1-11-08-2020}. We emphasize that this type of techniques in fact do not demand very much principle decay (in fact $<-1$ is sufficient) and there is still some margin left.

Then we take a look at the Klein-Gordon-Zakharov model system \eqref{eq1-Zakharov} in $\RR^{2+1}$. Clearly this is a strong coupling system. However it is {\sl not} critical if we regard the principle decay. The main observation is that the term $E\Delta w$ coupled in Klein-Gordon equation only contains the Hessian form of the wave component. The gradient $\del w$ does not appear in right-hand-side of the system. Remark that the Hessian form enjoy a faster principle decay (see for example in Proposition \ref{prop1-14-08-2020}). In fact 
$$
|\del\del w|\simeq (s/t)^{-1}s^{-2+\delta} + (s/t)^{-2}|\Box u|
$$
and the last term is quadratic, i.e, it can be expected to enjoy better decay than linear ones. When regarding \eqref{eq1-Zakharov}, $\del\del w\simeq s^{-1}$ is sufficient. So there is a margin between $(-2+\delta)$ and $(-1)$. Of course, to make the argument in \cite{Dong-2020-2} work, there are several non-trivial works in order to overcome the insufficiency of conical decay. Our techniques are also applicable on \eqref{eq1-Zakharov} (though it falls to be a special case of \eqref{eq-main}). In Section \ref{sec-Zakharov} we give a sketch in order to show the difference between critical principle decay and non-critical principle decay.

In the same manner, the system treated in \cite{Stingo-2018} also enjoys the above structure of Hessian form. When restricted to compactly supported initial data, our method is also applicable (see also a generalization in \cite{Dong-2020-2}).

In the case of \eqref{eq-main}, the difficulty comes form the other side: we can show that sufficient conical decay is available but the principle decay is at the critical level, i.e., the uniform standard energy bounds only leads to $s^{-1}$ principle decay for both wave and Klein-Gordon components and this is not integrable. Regarding the interaction $\del_{\alpha}u\del_{\beta}v$ coupled in wave equation, this will lead to non-uniformly-bounded standard energy on wave component. The existing techniques such as in \cite{LR2010} will not be applicable because there is no margin of principle decay, i.e., even if we are far from light-cone, the decay of $\del u$ is still insufficient. The null conditions or the fast decay of Klein-Gordon component (Proposition \ref{prop1-fast-kg}) will not aid because they only affect the conical decay. So we need to develop a series of techniques in a somehow inverse sens, i.e., they permit us to exchange surplus conical decay into principle decay. 

The main idea is to make the critical principle decay $s^{-1}$ sufficient, and when necessary, we can accept a loss on conical decay. Let us consider the energy bounds
\begin{equation}\label{eq3-11-08-2020}
\Ecal_2^N(s,u)^{1/2}\simeq s
\end{equation}
where $\Ecal_2^N(s,u)$ represents the $N$ order conformal energy (see in detail in Subsection \ref{subsec-conformal}). The importance is that to recover this bound, one only needs 
\begin{equation}
\|\Box u\|_{L^2(\Hcal_s)}\leq Cs^{-1}
\end{equation}
and there will not be logarithmic loss. By Klainerman-Sobolev inequality, \eqref{eq3-11-08-2020} leads to the following decay 
\begin{equation}\label{eq2-07-08-2020}
|\del u|\simeq (s/t)^{-1}s^{-1}.
\end{equation}
If we only regard the principle decay, this is sufficient. In an other word, now we have sufficient principle decay, but we have payed $(s/t)^{-1}$. The main techniques to be developed is to show that this prices is acceptable.

In the present article we concentrate only on the techniques of ``paying conical for principle''. The main system \eqref{eq-main} is a model in order to show our mechanism. The choice of this system is made under the following two considerations. First, the system should not be too trivial in order to show the necessity and potential of these techniques. Second, the system should not be too general or complicated such that the main ideas are covered under too much technical details. In our opinion the system \eqref{eq-main} balances well the above two points. We omit all terms that can be treated through existing techniques and preserve all semi-linear terms on $(\del u,\del v, v)$ in wave equation.

The Klein-Gordon-Zakharov model system is not in the form of \eqref{eq-main}, however our techniques are also applicable and the proof is somehow shorter. So we take it as a secondary example. 

In fact the techniques to be developed can be applied on more general systems. For example when there are pure Klein-Gordon terms coupled in Klein-Gordon equation, the normal form method developed by \cite{Shatah85}, \cite{Dfx} and \cite{KS1} are applicable and compatible with these techniques. When considering quasi-linear systems, these techniques can be easily adapted to curved metric.

In a coming work, these techniques will be applied on a special type of strong coupling system deduced form \ref{eq5-08-09-2020}. This will give a preliminary answer to the problem posed in \cite{Ab-2019} on the stability of wave map in $\RR^{2+1}$ case. 

\subsection{Statement of the main results and the structure of this article}
Now we state the main results on \eqref{eq-main} and \eqref{eq1-Zakharov} and then give a brief description on the structure of the present article.
\begin{theorem}\label{main1-thm}
	Consider the Cauchy problem associate to \eqref{eq-main} with the following initial data
	\begin{equation}
	u(2,x) = u_0,\quad \del_t u(2,x) = u_1,\quad v(2,x) = v_0,\quad \del_t v(2,x) = v_1.
	\end{equation}
	Suppose that $u_i$ and $v_i$ are supported in the unit disc $\{|x|<1\}$, sufficiently regular. Then there exists a positive constant $\vep$ such that if for $N\geq 14$ the following bounds hold:
	\begin{equation}
	\|u_0\|_{H^{N+1}(\RR^2)} + \|v_0\|_{H^{N+1}(\RR^2)} + \|u_1\|_{H^N(\RR^2)} + \|v_1\|_{H^N(\RR^2)}\leq \vep,
	\end{equation}
	then the associate local solution extends to time infinity. Furthermore, the following decay bounds hold:
	\begin{equation}
	\aligned
	&|\del u|\leq C\vep t^{-1/2}(1+|t-r|)^{-1/2},\quad |u|\leq C\vep t^{-1/2}(1+|t-r|)^{1/2},
	\\
	&|v| + |\del v|\leq C\vep t^{-1}.
	\endaligned
	\end{equation}
	with $C$ determined by the system and $N$.
\end{theorem}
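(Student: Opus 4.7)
The plan is to set up a bootstrap argument on the hyperboloidal foliation $\{\Hcal_s\}_{s\ge 2}$ inside the cone $\Kcal$, propagating three high-order bounds at some order $N\geq 14$: a (nearly) uniform standard energy $\Ecal^N(s,v)^{1/2}\lesssim \vep$ for the Klein-Gordon component, a slowly growing standard energy $\Ecal^N(s,u)^{1/2}\lesssim \vep\, s^{\delta}$ for the wave component with a tiny $\delta>0$, and, as advertised in the introduction, a linearly growing conformal energy $\Ecal_2^N(s,u)^{1/2}\lesssim \vep\,s$. The whole point of the third bound is to upgrade the naive, critical and non-integrable pointwise decay $|\del u|\simeq s^{-1}$ coming from the standard energy into the sharper $|\del u|\simeq (s/t)^{-1}s^{-1}$ supplied by the conformal Klainerman--Sobolev inequality; this pays a conical factor $(s/t)^{-1}$ near the light-cone to gain an integrable $s^{-1}$ principle decay elsewhere, which is exactly the exchange mechanism the introduction advertises.

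From these assumptions one extracts, via Klainerman--Sobolev, the full family of pointwise bounds at order $\leq N-2$ needed in the nonlinear analysis: the refined wave bound $|\del u|\lesssim \vep (s/t)^{-1}s^{-1}$, the standard Klein-Gordon decay $|\del v|+|v|\lesssim \vep (s/t) s^{-1}$, the fast Klein-Gordon decay $|v|+|\del v|\lesssim \vep\, t^{-1}$ from the high-order version of Proposition \ref{prop1-fast-kg}, and the Hessian bound $|\del\del u|\lesssim (s/t)^{-1}s^{-2+\delta}+(s/t)^{-2}|\Box u|$ from Proposition \ref{prop1-14-08-2020}, which lets nonlinear improvements on $\Box u$ propagate back into $\del\del u$.

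The core step is to recover the conformal energy bound. The conformal energy inequality takes the schematic form
\begin{equation*}
\Ecal_2^N(s,u)^{1/2}\lesssim \Ecal_2^N(2,u)^{1/2}+\int_2^s \bigl\|s'\,\Box\, \Gamma^I u\bigr\|_{L^2(\Hcal_{s'})}\,\frac{ds'}{s'},\qquad |I|\leq N,
\end{equation*}
so that achieving the target $\vep\,s$ without logarithmic loss forces the bound $\|\Box \Gamma^I u\|_{L^2(\Hcal_s)}\lesssim \vep^2 s^{-1}$. I would then proceed term by term on the right-hand side of the $u$-equation. The pure-wave null form $A_1^{\alpha\beta}\del_\alpha u\del_\beta u$ and the mixed null form $A_3^{\alpha\beta}\del_\alpha u\del_\beta v$ yield their $s^{-1}$ bounds because the null structure supplies an extra $(s/t)^2$ that absorbs the $(s/t)^{-1}$ losses in $\del u$; the interaction $A_4^\alpha v\del_\alpha u$ and, crucially, the three strong-coupling terms $B_2^{\alpha\beta}\del_\alpha v\del_\beta v$, $B_3^\alpha v\del_\alpha v$ and $K_2 v^2$ are estimated by placing the low-order factor in $L^\infty$ via the fast Klein-Gordon decay $t^{-1}$ and the high-order factor in $L^2$ via the uniform standard energy of $v$. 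The main obstacle is precisely this last family: these pure Klein-Gordon semi-linear terms are what destroy the conformal invariance of the wave equation in \eqref{eq-main} and are the source of the $s^{+1}$ blow-up flagged in the introduction, so extracting the $s^{-1}$ bound for them without any logarithmic loss is the step where the exchange mechanism must carry the real weight of the argument.

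The remaining two bounds are then propagated. Closure of $\Ecal^N(s,v)$ uses only the null source $A_5^{\alpha\beta}\del_\alpha u\del_\beta u$, whose null structure combined with the refined wave decay makes $\|A_5^{\alpha\beta}\del_\alpha u\del_\beta u\|_{L^2(\Hcal_s)}$ integrable in $s$, giving the $\vep$-uniform energy of $v$ (and, in turn, validating the fast Klein-Gordon decay used above). Closure of $\Ecal^N(s,u)$ only requires integrability in $s$ of $\|\Box u\|_{L^2(\Hcal_s)}$, which is already implied with margin by the much stronger $s^{-1}$ bound used to close the conformal energy, the buffer $\delta$ simply absorbing logarithmic commutator losses at top order. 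Recovering all three bounds with constants strictly smaller than the bootstrap constants yields global existence by continuity, and the pointwise decay rates stated in the theorem are then read off from Klainerman--Sobolev applied to the closed bounds.
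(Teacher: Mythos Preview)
Your bootstrap scheme has a genuine gap at its heart: the decay you extract for $\del u$ is too weak to close any of the three energies you propagate. Conformal Klainerman--Sobolev applied to $\Ecal_2^N(s,u)^{1/2}\lesssim\vep s$ gives only $|\del u|_{N-2}\lesssim\vep(s/t)^{-1}s^{-1}$, not $|\del u|\lesssim\vep s^{-1}$. That extra $(s/t)^{-1}$ is fatal. Take the null term $A_1^{\alpha\beta}\del_\alpha u\del_\beta u$: the null structure produces $(s/t)^2|\del u|_{\mathrm{low}}|\del u|_{\mathrm{high}}$, and with your decay the low factor contributes only $(s/t)s^{-1}$, leaving $\|A_1\del u\del u\|_{L^2(\Hcal_s)}\lesssim\vep^2 s^{-1+\delta}$ at best; feeding this into the conformal estimate yields $\int_2^s\tau\cdot\tau^{-1+\delta}d\tau\sim s^{1+\delta}$, which does \emph{not} recover $\vep s$. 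The same $(s/t)^{-1}$ loss obstructs the $A_4^\alpha v\del_\alpha u$ term when $v$ carries the high derivatives, and obstructs integrability of $A_5\del u\del u$ for the Klein--Gordon energy (so $\Ecal_{0,c}^N(s,v)$ is not uniform at top order either).

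The paper closes the argument differently. It carries the sharp pointwise bound $|\del_t\del^IL^Ju|\leq C_1\vep s^{-1}$ for $|I|+|J|\leq N-6$ as an \emph{additional} bootstrap assumption (equation \eqref{eq17-03-06-2020}); this is precisely the bound that no energy yields and that makes all the $L^2$ source estimates go through as $s^{-1}$. Recovering it is the technical core of the paper: one first eliminates $A_1^{\alpha\beta}\del_\alpha u\del_\beta u$ via the substitution $\phi=u-\tfrac{1}{2}A_1^{00}u^2$ (Lemma \ref{prop1-30-07-2020}), then uses the Poisson--formula estimate of Proposition \ref{prop1-01-06-2020} to get $|\phi|_{N-4}\lesssim\vep(s/t)$ near the cone, and finally integrates the first--order ODE \eqref{decompo-wave-2.5} along the time--like hyperbolas of Proposition \ref{prpo2 wave-sharp} to recover $|\del_t\del^IL^J\phi|\lesssim\vep s^{-1}$. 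A second ingredient you omit is the nonlinear transform $w=v-c^{-2}A_5^{\alpha\beta}\del_\alpha u\del_\beta u$ (Section \ref{sec-bootstrap-kg}), needed to make the Klein--Gordon energy uniform at order $N-1$; without it the source $A_5\del u\del u$ is only $s^{-1+\delta}$ in $L^2$ and produces logarithmic growth. Finally, the paper runs a two--level hierarchy ($s^{1+\delta}$ at order $N$, $s$ at $N-1$ for the conformal energy; $s^\delta$ at $N$, uniform at $N-1$ for Klein--Gordon), not the single--level scheme you describe.
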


\begin{theorem}\label{main2-thm}
	Consider the Cauchy problem associate to \eqref{eq1-Zakharov} with the following initial data
	\begin{equation}
	u(2,x) = u_0,\quad \del_t u(2,x) = u_1,\quad E^a(2,x) = E^a_0,\quad \del_t E^a(2,x) = E^a_1.
	\end{equation}
	Suppose that $u_i$ and $E^a_i$ are supported in the unit disc $\{|x|<1\}$, sufficiently regular. Then there exists a positive constant $\vep$ such that if for $N\geq 13$ the following bounds hold:
	\begin{equation}
	\|u_0\|_{H^{N+2}(\RR^2)} + \|E^a_0\|_{H^{N+1}(\RR^2)} + \|u_1\|_{H^{N+1}(\RR^2)} + \|E^a_1\|_{H^N(\RR^2)}\leq \vep,
	\end{equation}
	then the associate local solution extends to time infinity. Furthermore, the following decay bounds hold:
	\begin{equation}
	\aligned
	&|\del u|\leq C\vep t^{-1/2}(1+|t-r|)^{-1/2},\quad |u|\leq C\vep t^{-1/2}(1+|t-r|)^{1/2},
	\\
	&|E^a| + |\del E^a|\leq C\vep t^{-1}.
	\endaligned
	\end{equation}
	with $C$ determined by the system and $N$.
\end{theorem}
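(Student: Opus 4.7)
The plan is a bootstrap argument on the hyperboloidal foliation $\{\Hcal_s\}_{s\geq 2}$ in $\Kcal$, exploiting the Klein-Gordon-Zakharov-specific feature emphasized in the introduction: only the Hessian $\Delta u$ of the wave component (and not $\del u$) appears in the source of the Klein-Gordon equation, so the extra principle decay of Hessians obtained through the wave equation itself compensates for the critical principle decay of $\del u$. For $N\geq 13$ and admissible vector fields $Z\in\{\del_\alpha, L_a, \Omega_{ab}\}$, I would make bootstrap assumptions roughly of the form
\begin{equation*}
\Ecal^N(s,E)^{1/2}\leq C_1\vep\, s^{\delta},\qquad \Ecal^{N-1}(s,\del u)^{1/2}\leq C_1\vep,\qquad \Ecal_2^N(s,u)^{1/2}\leq C_1\vep\, s^{1+\delta'},
\end{equation*}
with $\delta,\delta'>0$ small, and recover these with $C_1$ replaced by $C_1/2$ for $\vep$ small enough. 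The conformal energy of $u$ is allowed to grow like $s$, in accordance with the ``paying conical for principle'' strategy.

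I would close the $u$-estimates first. From $\Box u=|E|^2$ the Klainerman-Sobolev decay $|E|\lesssim \vep (s/t)s^{-1+\delta}$ together with $\|E\|_{L^2(\Hcal_s)}\lesssim \vep s^{\delta}$ gives
\begin{equation*}
\|\Box u\|_{L^2(\Hcal_s)}\leq \|E\|_{L^\infty}\|E\|_{L^2(\Hcal_s)}\lesssim \vep^2 s^{-1+2\delta}.
\end{equation*}
According to the remark in the introduction, $\|\Box u\|_{L^2(\Hcal_s)}\lesssim s^{-1}$ is precisely what drives $\Ecal_2^N(s,u)^{1/2}\lesssim s$ without logarithmic loss, so the conformal bound closes (the extra $s^{2\delta}$ is absorbed into $s^{1+\delta'}$). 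The same $L^\infty\cdot L^2$ split applied to $Z^I(|E|^2)$, with the low-order factor in $L^\infty$ and the top-order factor in $L^2$, extends this to order $N$; the standard energy of $\del u$ closes similarly since $\||E|^2\|_{L^2(\Hcal_s)}$ is $s$-integrable.

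From the closed conformal bound, Klainerman-Sobolev yields $|\del u|\lesssim \vep (s/t)^{-1}s^{-1+\delta'}$: enough principle decay, but a conical loss of order $(s/t)^{-1}$. To close the Klein-Gordon bound on $E$, I would use the Hessian decomposition (Proposition \ref{prop1-14-08-2020})
\begin{equation*}
|\del\del u|\lesssim (s/t)^{-2}|\Box u|+(s/t)^{-1}s^{-1}\cdot(\text{lower-order terms involving }Zu),
\end{equation*}
and inject $|\Box u|=|E|^2\lesssim \vep^2 (s/t)^2 s^{-2+2\delta}$ so that the $(s/t)^{-2}$ factor is neutralized and $|\Delta u|\lesssim \vep^2 s^{-2+2\delta}+\vep (s/t)^{-1}s^{-2+\delta'}$. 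Then
\begin{equation*}
\|E\Delta u\|_{L^2(\Hcal_s)}\leq \|\Delta u\|_{L^\infty}\|E\|_{L^2(\Hcal_s)}
\end{equation*}
is $s$-integrable and closes the standard KG energy bound on $E$ without deteriorating $\delta$.

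The main obstacle will be the top-order Hessian argument. Propagating the inequality $|\del\del u|\lesssim (s/t)^{-2}|\Box u|+\cdots$ through $N$ commutations is delicate because $Z^I\Delta u\neq \Delta(Z^I u)$, so each commuted equation must be rewritten using the wave operator acting on $Z^I u$ plus tangential fields, iteratively exchanging conical for principle decay; moreover the top-order pointwise bounds on $E$ are not directly available, so the $L^\infty\cdot L^2$ splits on $Z^I(E\Delta u)$ and $Z^I(|E|^2)$ must be engineered so that the factor losing pointwise control is paired with a factor retaining $L^2$ control at top order. Once these high-order Hessian estimates are established, a Gronwall argument in $s$ recovers the bootstrap with improved constants, yielding global existence, and the pointwise decay stated in Theorem \ref{main2-thm} follows from the closed energies via Klainerman-Sobolev after noting that $(s/t)^{-1}s^{-1}\sim t^{-1/2}(1+|t-r|)^{-1/2}$ in $\Kcal$.
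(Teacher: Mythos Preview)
Your proposal correctly identifies the central structural feature --- only the Hessian of $u$ enters the Klein-Gordon source, so Proposition~\ref{prop1-14-08-2020} gives extra principle decay --- but the bootstrap hierarchy you set up does not close, and the approach diverges from the paper's in a way that matters.

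The paper's argument (Section~\ref{sec-Zakharov}) does \emph{not} use the conformal energy $\Ecal_2$ for \eqref{eq1-Zakharov} at all. Instead it bootstraps the \emph{standard} energies
\[
\Ecal_0^N(s,\del_\alpha u)^{1/2}+\Ecal_0^N(s,u)^{1/2}+\Ecal_{0,c}^N(s,E)^{1/2}\leq C_1\vep s^{\delta},\qquad \Ecal_{0,c}^{N-1}(s,E)^{1/2}\leq C_1\vep,
\]
i.e.\ it carries one extra order of regularity on the wave component (this is exactly why the statement asks for $u_0\in H^{N+2}$) and a \emph{uniform} lower-order bound on $E$, not on $\del u$. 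Your hierarchy has neither of these: with only $\Ecal^N(s,E)^{1/2}\leq C_1\vep s^\delta$, the best you get is $\||E\del_\alpha E|_{N-1}\|_{L^2(\Hcal_s)}\lesssim (C_1\vep)^2 s^{-1+2\delta}$, which is not $s$-integrable, so your uniform bound $\Ecal^{N-1}(s,\del u)^{1/2}\leq C_1\vep$ cannot be closed. Likewise, at the top order of the Klein-Gordon equation the term with all $N$ derivatives on $\Delta u$ requires $\|(s/t)|\del(\del u)|_N\|_{L^2(\Hcal_s)}$, which comes directly from $\Ecal_0^N(s,\del u)$ in the paper but is not available from either your $\Ecal^{N-1}(s,\del u)$ or from $\Ecal_2^N(s,u)$ (the Hessian formula would need $|\del u|_{N+1}$).

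The second concrete gap is the top-order Klein-Gordon closure itself. The paper singles this out as ``the most difficult'' step and handles it by applying Proposition~\ref{prpo2 wave-sharp} (integration along hyperbolas) to $\Box\del_\alpha u = 2E\cdot\del_\alpha E$, obtaining the sharp bound $|\del\del u|_{N-4}\leq CC_1\vep s^{-1}$ directly, without any uniform energy on $\del u$. This is precisely the new technique of Section~\ref{sec2-08-06-2020} and is essential here; your sketch does not invoke it, and your hoped-for substitute (getting $|\del\del u|\lesssim s^{-1}$ from a uniform energy on $\del u$) fails as explained above. In short: drop the conformal energy, bootstrap $\Ecal_0^N(s,\del u)$ with $s^\delta$ growth and $\Ecal_{0,c}^{N-1}(s,E)$ uniformly, and use Proposition~\ref{prpo2 wave-sharp} on $\del_\alpha u$ for the sharp pointwise Hessian bound.
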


This article is composed by two parts. In the first part (from Section \ref{sec-recall} to Section \ref{sec2-08-06-2020}) we develop the necessary estimates and in the second part (from Section \ref{sec-bootstrap-1} to Section \ref{sec-Zakharov}) we prove the global existence of the model systems \eqref{eq-main} and \eqref{eq1-Zakharov}.

For the convenience of the reader we recall some basic results of the hyperboloidal foliation in Section \ref{sec-recall} and sketch some of their proofs in the Appendix. Then as explained in the previous subsections the main task is to surmount the loss of $(s/t)^{-1}$. This is done in two steps. The first step is an estimate on the fundamental solution of the wave equation made in Section \ref{sec1-08-06-2020}. In our mechanism it is a necessary tool for recovering conical decay without paying principle decay on $|u|$. The second step, contained in Section \ref{sec2-08-06-2020}, is an $L^{\infty}$ estimate on $|\del u|$ via integration along hyperbolas. 

The global existence of \eqref{eq-main} is proved by the standard bootstrap argument.  In Section \ref{sec-bootstrap-1} we state the bootstrap argument. In the set of bootstrap bounds there are the energy bounds on wave and Klein-Gordon components as well as the decay bound \eqref{eq17-03-06-2020} on wave component. They will be improved in Section \ref{sec-bootstrap-1}, Section \ref{sec-bootstrap-kg} and Section \ref{sec-bootstrap-decay} respectively. 

In Section \ref{sec-Zakharov} we sketch the global existence of \eqref{eq1-Zakharov}.

\centerline{\bf Acknowledgments}
The present work belongs to a research project ``Global stability of quasilinear wave-Klein-Gordon system in $2 + 1$ space-time dimension'' (11601414), supported by NSFC.

\section{Recall of basic facts in hyperboloidal foliation}
\label{sec-recall}
\subsection{Frames and vector fields}
Let $(t,x)\in \RR^{2+1}$ with $x\in \RR^2$. Denote by $r = \sqrt{|x^1|^2+|x^2|^2}$. We work in the light-cone $\Kcal := \{r < t-1\}\subset \RR^{2+1}$. 

We recall the following nations introduced in \cite{LM1}:
$$
\delu_0:=\del_t,\quad \delu_a := \delb_a = (x^a/t)\del_t + \del_a.
$$
The transition matrices between this frame and the natural frame $\{\del_{\alpha}\}$ are:
\begin{equation}\label{eq semi-frame}
\Phiu_{\alpha}^{\beta} := \left(
\begin{array}{ccc}
1 &0 &0
\\
x^1/t &1 &0
\\
x^2/t &0 &1 
\end{array}
\right),
\quad
\Psiu_{\alpha}^{\beta} := \left(
\begin{array}{ccc}
1 &0 &0
\\
-x^1/t &1 &0
\\
-x^2/t &0 &1 
\end{array}
\right)
\end{equation}
with
$$
\delu_{\alpha} = \Phiu_{\alpha}^{\beta}\del_{\beta},\quad \del_{\alpha} = \Psiu_{\alpha}^{\beta}\delu_{\beta}.
$$

The vector field (derivatives) $\delu_a$ are tangent to the hyperboloid $\Hcal_s = \{(t,x)| t = \sqrt{s^2+r^2}\}$. We call them hyperbolic derivatives.

Let $T = T^{\alpha\beta}\del_{\alpha}\otimes\del_{\beta}$ be a two tensor defined in $\Kcal$ or its subset. Then $T$ can be written with $\{\delu_{\alpha}\}$:
$$
T = \Tu^{\alpha\beta} \delu_{\alpha}\otimes\delu_{\beta}
\quad\text{with}\quad 
\Tu^{\alpha\beta} = T^{\alpha'\beta'}\Psiu_{\alpha'}^{\alpha}\Psiu_{\beta'}^{\beta}.
$$

\subsection{High-order derivatives}
In the region $\Kcal$, we introduce the following Lorentzian boosts:
$$
L_a = x^a\del_t + t\del_a,\quad a = 1, 2
$$
and the following notation of high-order derivatives: let $I,J$ be multi-indices taking values in $\{0,1,2\}$ and $\{1,2\}$ respectively,
$$
I = (i_1,i_2,\cdots, i_m),\quad J = (j_1,j_2,\cdots, j_n).
$$
We define
$$
\del^IL^J = \del_{i_1}\del_{i_2}\cdots \del_{i_m}L_{j_1}L_{j_2}\cdots L_{j_n}
$$
to be an $(m+n)-$order derivative. 

Let $\mathscr{Z}$ be a family of vector fields. $\mathscr{Z} = \{Z_i|i=0,1,\cdots, 6\}$ with
$$
Z_0 = \del_t,\quad  Z_1=\del_1,\quad Z_2 = \del_2,\quad Z_3 = L_1,\quad Z_4=L_2,\quad Z_5 = \delu_1,\quad Z_6 = \delu_2.
$$
A high-order derivative of order $N$ on $\mathscr{Z}$ with milti-index $I = (i_1,i_2,\cdots, i_N)$, $i_j\in\{0,1,\cdots,6\}$ is defined as
$$
Z^I := Z_{i_1}Z_{i_2}\cdots Z_{i_N}.
$$
A high-order derivative $Z^I$ is said to be of type $(a,b,c)$, if it contains at most $a$ partial derivatives, $b$ Lorentzian boosts and $c$ hyperbolic derivatives.

We then introduce the following notation:
$$
\mathcal{I}_{p,k} = \{I| I \text{ is of type }(a,b,0)\text{ with }a+b \leq p, b\leq k \}
$$
so $Z^I$ with $I\in \mathcal{I}_{p,k}$ stands for a high-order derivatives composed by boosts and partial derivative. Its order is smaller or equal to $p$ and it contains at most $k$ boosts.  We define
\begin{equation}\label{eq1 notation}
\aligned
|u|_{p,k} &:= \max_{K\in \mathcal{I}_{p,k}}|Z^K u|,\quad &&|u|_p := \max_{0\leq k\leq p}|u|_{p,k},
\\
|\del u|_{p,k} &:= \max_{\alpha=0,1,2}|\del_{\alpha} u|_{p,k}, &&|\del u|_p := \max_{0\leq k\leq p}|\del u|_{p,k},
\\
|\del^m u|_{p,k} &:= \max_{|I|=m}|\del^I u|_{p,k}, &&|\del^m u|_p := \max_{0\leq k\leq p}|\del^I u|_{p,k},
\\
|\dels u|_{p,k} &:= \max\{|\delu_1 u|_{p,k},|\delu_2u|_{p,k}\}, &&|\dels u|_p := \max_{0\leq k\leq p}|\dels u|_{p,k},
\\
|\del\dels  u|_{p,k} &:=\max_{a,\alpha} \{|\delu_a\del_{\alpha} u|_{p,k},|\del_{\alpha}\delu_a u|_{p,k}\},
&&| \del\dels u|_p :=\max_{0\leq k\leq p}| \del\dels u|_{p,k},
\\
|\dels\dels  u|_{p,k} &:=\max_{a,b} \{|\delu_a\delu_b u|_{p,k}\},
&&| \dels\dels u|_p :=\max_{0\leq k\leq p}| \dels\dels u|_{p,k}.
\endaligned
\end{equation}
These quantities will be applied in order to control varies of high-order derivatives in the following discussion. Our first task is to bound them by energy densities. These results will be stated after the introduction of the standard and conformal energy inequalities in the following two subsections.

\subsection{Standard energy estimate on hyperboloids}
The modifier ``standard'' in the title means that this energy is obtained by the standard multiplier $\del_tu$. Because the coefficient of $\del_t$ is homogeneous of degree zero, this energy is also called {\sl 0-energy}. In the present case we only need energy within Minkowski background metric:
\begin{equation}\label{standard energy}
E_{0,c}(s,u):= \int_{\Hcal_s}e_{0,c}[u]dx
\end{equation}
where the energy density
\begin{equation}\label{density-standard}
\aligned
e_{0,c}[u]:=&|\del_tu|^2+\sum_a|\del_au|^2 + 2(x^a/t)\del_tu\del_au + c^2u^2 
\\
=&\sum_a |\delu_a u|^2 + |(s/t)\del_tu|^2 + c^2u^2
\\
=&|\delu_{\perp}u|^2 + \sum_a|(s/t)\del_a u|^2 + \sum_{a<b}\big|t^{-1}\Omega_{ab}u\big|^2 + c^2u^2
\endaligned
\end{equation}
with $\delu_{\perp} := \del_t + (x^a/t)\del_a$. We denote by $e_0[u] = e_{0,c=0}[u]$.

We also introduce the following high-order energy:
\begin{equation}\label{eq 3' 01-01-2019}
\Ecal_{0,c}^{p,k}(s,u) := \max_{|I|\leq p-k\atop |J|\leq k}E_{0,c}(s,\del^IL^J u),\quad \Ecal_0^{p,k}(s,u) := \max_{|I|\leq p-k\atop |J|\leq k}E_0(s,\del^IL^J u).
\end{equation}
\begin{equation}\label{eq 3 01-01-2019}
\Ecal_{0,c}^N(s,u) := \max_{|I|+|J|\leq N}E_{0,c}(s,\del^IL^J u),\quad \Ecal_0^N(s,u) := \max_{|I|+|J|\leq N}E_0(s,\del^IL^J u)
\end{equation}
and the domain
$$
\Hcal_{[s_0,s_1]} = \Big\{(t,x)\in\Kcal, \sqrt{s_0^2+r^2}\leq t\leq \sqrt{s_1^2+r^2}\Big\}.
$$

Then we recall the standard energy estimate on hyperboloids. The proof can be found in \cite{LM1}.
\begin{proposition}[Standard energy estimate]\label{prop 1 energy}
	We consider the $C^2$ solution $u$ to the following wave / Klein-Gordon equation
	$$
	\Box u + c^2 u = F,
	$$
	in the region $\Hcal_{[s_0,s_1]}$ and vanishes near the conical boundary $\del\Kcal = \{t=r-1\}$. Then the following energy estimate holds:
	\begin{equation}\label{ineq 3 prop 1 energy}
	\aligned
	E_{0,c}(s,u)^{1/2}\leq& E_{0,c}(2,u)^{1/2} + \int_2^s \|F\|_{L^2(\Hcal_\tau)}d\tau.
	\endaligned
	\end{equation}
\end{proposition}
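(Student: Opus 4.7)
The natural approach is the standard multiplier method with the multiplier $\del_t u$. Multiplying $\Box u + c^2 u = F$ by $\del_t u$ and regrouping term by term produces the divergence identity
$$\del_t u \cdot F \;=\; \del_\alpha V^\alpha, \qquad V^0 \;=\; \tfrac12\bigl(|\del_t u|^2 + \sum_a|\del_a u|^2 + c^2 u^2\bigr), \qquad V^a \;=\; -\del_t u\,\del_a u.$$
The individual contributions come from $(\del_t^2 u)\del_t u = \tfrac12 \del_t|\del_t u|^2$, $(\del_a^2 u)\del_t u = \del_a(\del_t u\,\del_a u) - \tfrac12\del_t|\del_a u|^2$, and $c^2 u\,\del_t u = \tfrac{c^2}{2}\del_t(u^2)$.

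Next I would integrate the identity over $\Hcal_{[2,s]}$ and apply the divergence theorem in $\RR^{2+1}$. The boundary decomposes into the two hyperboloids $\Hcal_s$ and $\Hcal_2$ together with a portion of the cone $\del\Kcal$, and the conical piece contributes nothing by the vanishing hypothesis. Writing $\Hcal_\tau$ as the graph $t = \sqrt{\tau^2 + r^2}$ over $x \in \RR^2$, the Euclidean outward unit normal is proportional to $(t,-x^a)$ while the Euclidean surface element equals $\sqrt{t^2+r^2}/t\,dx$; the normalization factor cancels and the flux of $V^\alpha$ across $\Hcal_\tau$ reduces to
$$\int_{\RR^2}\bigl(V^0 - (x^a/t)V^a\bigr)\,dx \;=\; \tfrac12\int_{\Hcal_\tau} e_{0,c}[u]\,dx \;=\; \tfrac12 E_{0,c}(\tau,u),$$
matching the density formula \eqref{density-standard} exactly. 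Consequently
$$E_{0,c}(s,u) - E_{0,c}(2,u) \;=\; 2\int_{\Hcal_{[2,s]}}\del_t u\cdot F\,dt\,dx.$$

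Finally I would parametrize the spacetime integral by $(\tau,x)$ through $t = \sqrt{\tau^2+r^2}$, whose Jacobian is $\tau/t$, and apply Cauchy--Schwarz:
$$\Bigl|\int_{\RR^2}(\tau/t)\del_t u\cdot F\,dx\Bigr| \;\leq\; \bigl\|(\tau/t)\del_t u\bigr\|_{L^2(\Hcal_\tau)}\,\|F\|_{L^2(\Hcal_\tau)} \;\leq\; E_{0,c}(\tau,u)^{1/2}\,\|F\|_{L^2(\Hcal_\tau)},$$
where the last inequality comes directly from \eqref{density-standard}. Differentiating in $s$ yields $\tfrac{d}{ds}E_{0,c}(s,u)^{1/2} \leq \|F\|_{L^2(\Hcal_s)}$, and integrating from $2$ to $s$ delivers the claim. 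There is no real analytic obstacle here; the only technical care required is the consistent bookkeeping of orientation and of the Jacobian $s/t$ when passing from the Lebesgue measure $dt\,dx$ to foliation by hyperboloids, which is responsible for the factor $(s/t)\del_t u$ being exactly the component that is controlled by the standard energy density.
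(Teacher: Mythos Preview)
Your argument is correct and is exactly the standard multiplier method one expects here. The paper itself does not give a proof of this proposition but simply refers to \cite{LM1}; your write-up supplies precisely the details that reference would contain: the divergence identity from the multiplier $\del_t u$, the flux computation on $\Hcal_\tau$ reproducing the density $e_{0,c}[u]$, and the Gr\"onwall-type step via Cauchy--Schwarz with the Jacobian $s/t$.
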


\subsection{Conformal energy estimate on hyperboloids}
\label{subsec-conformal}
In this section we recall the conformal energy estimate and sketch its proof. More detailed discussions can be found in \cite{M4}. For the convenience of discussion, we recall the following hyperbolic parameterization of $\Kcal = \{t>r+1\}$:
$$
\xb^0 = s := \sqrt{t^2 - |x|^2}, \quad \xb^a = x^a.
$$
The related natural frame is
$$
\delb_0 = \delb_s = (s/t)\del_t, \quad \delb_a = \delu_a = (x^a/t)\del_t + \del_a.
$$
Remark that $[\delb_{\alpha},\delb_{\beta}] = 0$. 

We introduce the following conformal energy on hyperboloid $\Hcal_s$:
$$
E_2(s,u) := \int_{\Hcal_s}\Big(\sum_a|s\delu_a u|^2 + s^{-2}|K_2u + su|^2\Big)dx
$$
where $K_2 = s^2\del_t + 2sx^a\delu_a$ is the conformal multiplier. The index $2$ in $K_2$ and $E_2$ is to demonstrate the homogeneity of these objects. In fact the energy controls $s^2\|\delu_a u\|_{L^2(\Hcal_s)}$ and the coefficient of $K_2$ is homogeneous of degree $2$. The same principle is applied on $E_0$ and $K_0 = \del_t$ for standard energy. Then we state the conformal energy estimate:
\begin{proposition}
	Let $u$ be a sufficiently regular function defined in $\Hcal_{[s_0,s_1]}$, vanishes near the conical boundary $\del\Kcal = \{r=t-1\}$. Then the following estimate holds:
	\begin{equation}\label{eq8-15-06-2020}
	\Ec(s_1,u)^{1/2} \leq \Ec(s_0,u)^{1/2} + \int_{s_0}^{s_1}s\|\Box u\|_{L^2(\Hcal_s)}ds.
	\end{equation}
\end{proposition}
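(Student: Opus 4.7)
The plan is to derive a pointwise divergence identity of the shape
\begin{equation*}
2(\Box u)(K_2 u + s u) = \delb_s Q_0 + \delu_a Q^a
\end{equation*}
in the hyperbolic coordinates $(s,x^a)$, where $Q_0$ reproduces, up to the Jacobian $t/s$ coming from $dt\,dx = (s/t)\,ds\,dx$, the conformal energy density $\sum_a |s\delu_a u|^2 + s^{-2}|K_2 u + s u|^2$, and $Q^a$ are purely spatial fluxes. The multiplier $K_2 u + s u$ is the analogue in the $2+1$ hyperboloidal setting of the classical Morawetz multiplier $K_0 u + (n-1) t u$ for the $\RR^{n+1}$ conformal energy; the zeroth order correction $+\,s u$ is tuned so that the deformation-tensor residues of $K_2$ combine with the quadratic terms into the advertised complete-square density rather than an indefinite form.

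First I would expand $\Box u$ in the hyperbolic frame $\{\delb_s,\delu_1,\delu_2\}$, multiply by $K_2 u + s u = s t\,\delb_s u + 2 s x^a \delu_a u + s u$, and rearrange using $[\delb_s,\delu_a] = 0$: the two squares $|s \delu_a u|^2$ and $s^{-2}|K_2 u + su|^2$ should appear as $\delb_s$-derivatives, while the cross terms $\delb_s u \cdot \delu_a u$ get absorbed into $\delu_a$-divergences. Integrating the identity over the slab $\Hcal_{[s_0,s_1]}$ against the volume form $(s/t)\,ds\,dx$, the spatial flux integrals $\int \delu_a Q^a\, dx$ vanish because $u$ vanishes near $\del\Kcal$, and the $\delb_s Q_0$ contribution telescopes to $\Ec(s_1,u) - \Ec(s_0,u)$.

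Next I would apply Cauchy--Schwarz to the resulting identity:
\begin{equation*}
\Ec(s_1,u) - \Ec(s_0,u) \leq 2\int_{s_0}^{s_1}\!\!\int_{\Hcal_s}|\Box u|\,|K_2 u + s u|\,dx\,ds \leq 2\int_{s_0}^{s_1} s \,\|\Box u\|_{L^2(\Hcal_s)}\,\bigl\|s^{-1}(K_2 u + s u)\bigr\|_{L^2(\Hcal_s)}\,ds.
\end{equation*}
By the very definition of $\Ec$ one has $\bigl\|s^{-1}(K_2 u + s u)\bigr\|_{L^2(\Hcal_s)} \leq \Ec(s,u)^{1/2}$, so
\begin{equation*}
\Ec(s_1,u) - \Ec(s_0,u) \leq 2\int_{s_0}^{s_1} s\,\|\Box u\|_{L^2(\Hcal_s)}\,\Ec(s,u)^{1/2}\,ds.
\end{equation*}
A standard regularization (differentiate $s\mapsto (\Ec(s,u) + \eta)^{1/2}$ and let $\eta\downarrow 0$) converts this into $\tfrac{d}{ds}\Ec(s,u)^{1/2} \leq s\|\Box u\|_{L^2(\Hcal_s)}$, and integration from $s_0$ to $s_1$ delivers \eqref{eq8-15-06-2020}.

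The main obstacle is the first step. Unlike in $\RR^{3+1}$, where $K_0 = (t^2+r^2)\del_t + 2 t x^a \del_a$ is a genuine conformal Killing field of Minkowski space, the hyperboloidal multiplier $K_2 = s^2\del_t + 2 s x^a \delu_a$ has a nontrivial deformation tensor in these coordinates, and in $2+1$ dimensions the lower-order correction $+\,s u$ must be chosen with exactly the right coefficient so that these residues assemble into a complete square rather than an indefinite quadratic form. Verifying that no uncontrolled zeroth- or first-order remainder survives after the rearrangement is the delicate computational point; once that identity is secured, the remaining two steps are standard energy-inequality manipulations.
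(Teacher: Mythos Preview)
Your proposal is correct and follows essentially the same route as the paper: multiply $\Box u$ by $K_2 u + su$, derive a divergence identity in the hyperbolic coordinates $(s,\xb^a)$ whose $\delb_s$-part is exactly the conformal energy density, integrate over $\Hcal_{[s_0,s_1]}$ so the spatial fluxes vanish, and close with Cauchy--Schwarz plus the usual $\frac{d}{ds}E^{1/2}$ argument. The paper supplies precisely the explicit identities you anticipate---in particular it records the hyperbolic decomposition $\Box u = s^{-1}\delb_s(s\delb_s + 2\xb^a\delb_a)u - \sum_a\delb_a\delb_a u + s^{-1}\delb_s u$ and checks that the cross terms from $s(s\delb_s + 2\xb^a\delb_a)u\cdot\Box u$ and from $su\cdot\Box u$ cancel exactly, leaving only $\tfrac12\delb_s\big((s\delb_s u + 2\xb^a\delb_a u + u)^2 + \sum_a|s\delb_a u|^2\big)$ plus spatial divergences---thereby confirming your ``delicate computational point'' that no residual term survives.
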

\begin{proof}[Sketch of Proof]
	This is a standard energy-type estimate. We apply the multiplier $K_2u + su = s^2\delb_su + 2sx^a\delu_au + su$, write the equation into divergence form and integrate it in the domain $\Kcal_{[s_0,s_1]}$ with Stokes' formula. Here we only give the differential identities. The first is the hyperbolic decomposition of the wave operator:
	$$
	\Box u = s^{-1}\delb_s\left(s\delb_s + 2\xb^a\delb_a\right)u -\sum_a\delb_a\delb_au + \frac{1}{s}\delb_su.
	$$
	Then it is easy to show the following identities:
	$$
	\aligned
	s(s\delb_s + 2\xb^a\delb_a)u\ \Box u=& \frac{1}{2}\left((s\delb_su + 2\xb^a\delb_au)^2 + \sum_a|s\delb_au|^2 \right) + \delb_aw_1^a[u]
	\\
	&+ \delb_su\ (s\delb_s+2\xb^a\delb_s)u - s\sum_a|\delb_au|^2,
	\endaligned
	$$
	$$
	\aligned
	su\ \Box u
	=&\delb_s\left(u(s\delb_s+2\xb^a\delb_a)u + (1/2)u^2\right) + \delb_aw_0^a[u]
	\\
	& - \delb_su\left(\delb_su + 2\xb^a\delb_au\right) + s\sum_a|\delb_au|^2 .
	\endaligned
	$$
	where
	$$
	w_1^a[u] = - s^2\delb_su\delb_au - s\sum_ax^a|\delb_a u|^2  ,\quad w_0^a[u] = -su\delb_a u.
	$$
	Thus we obtain
	\begin{equation}
	\aligned
	&(K_2 + s) u\ \Box u 
	\\
	=& \frac{1}{2}\del_s\Big((s\delb_su + 2\xb^a\delb_au)^2 + \sum_a|s\delb_au|^2 + 2u(s\delb_su + 2\xb^a\delb_au) + u^2\Big) + \delu_aw^a[u].
	\endaligned
	\end{equation}
	Integrate this identity in $\Hcal_{[s_0,s_1]} = \{(s,\xb), s_0\leq s\leq s_1\}$ and apply Stokes' formula, the desired estimate is established. 
\end{proof}
However, the conformal energy does not control directly $\del_t u$. To obtain the bound on this derivative, we recall the following result established in \cite{M4} (for alternative approach, see \cite{Wong-2017}):
\begin{lemma}\label{proposition 1 01-01-2019}
	Let $u$ be a $C^1$ function defined in $\Hcal_{[s_0,s_1]}$ and vanishes near $\del\Kcal$. Then
	\begin{equation}\label{eq 1 14-12-2018}
	\|(s/t)u\|_{L^2(\Hcal_{s_1})}\leq \|(s/t)u\|_{L^2(\Hcal_{s_0})} + C\int_{s_0}^{s_1}s^{-1}\Ec(s,u)^{1/2}ds.
	\end{equation}
\end{lemma}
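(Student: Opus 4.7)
The plan is to track $f := (s/t)u$ directly along the foliation and to derive a first-order identity whose right-hand side is controlled by the conformal energy with the sharp weight $s^{-1}$, rather than appealing to a Gr\"onwall argument that would introduce a logarithmic loss.

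First, I would pass to hyperbolic coordinates $(s,\xb^a)$, in which $\delb_s=(s/t)\del_t$ and the volume element on $\Hcal_s$ is $dx$, and use $\delb_s(s/t)=r^2/t^3$ to obtain $\delb_s f = (r^2/t^3)\,u + (s/t)\,\delb_s u$. The conformal-energy density supplies the relation $s^{-1}(K_2 u + su)=s\delb_s u + 2\xb^a\delu_a u + u$. Solving this for $s\delb_s u$, substituting, and using $(r^2-t^2)/t^3=-s^2/t^3$ together with $(s^2/t^3)u=(s/t^2)f$, I arrive at the key identity
\begin{equation*}
\delb_s f + \frac{s}{t^2}\,f \;=\; \frac{1}{t}\bigl(s\delb_s u + 2\xb^a\delu_a u + u\bigr) - \frac{2\xb^a}{t}\,\delu_a u.
\end{equation*}
Both terms on the right are bounded in $L^2(\Hcal_s)$ by $Cs^{-1}\Ec(s,u)^{1/2}$: the parenthesis is $s^{-1}(K_2u+su)$, whose $L^2$-norm is at most $\Ec^{1/2}$, and $1/t\le 1/s$ on $\Hcal_s$; while $|\xb^a|/t\le 1$ together with $\|s\delu_a u\|_{L^2(\Hcal_s)}\le\Ec^{1/2}$ controls the second.

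Finally, I would multiply this identity by $f$ and integrate over $\Hcal_s$; since $u$ (hence $f$) vanishes near $\del\Kcal$, the support is compact in $x$ and differentiation under the integral sign is legitimate. Cauchy--Schwarz on the right, together with $\int f\,\delb_s f\,dx=\tfrac{1}{2}\tfrac{d}{ds}\|f\|_{L^2(\Hcal_s)}^2$, yields
\begin{equation*}
\tfrac{1}{2}\tfrac{d}{ds}\|f\|_{L^2(\Hcal_s)}^2 + \int_{\Hcal_s}(s/t^2)\,f^2\,dx \;\le\; Cs^{-1}\Ec(s,u)^{1/2}\,\|f\|_{L^2(\Hcal_s)}.
\end{equation*}
The decisive point---and the main obstacle at first sight---is that the extra term $\int(s/t^2)f^2\,dx$ carries the \emph{good sign}, so it may simply be discarded. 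This gives $\tfrac{d}{ds}\|f\|_{L^2(\Hcal_s)}\le Cs^{-1}\Ec(s,u)^{1/2}$, and integration in $s$ from $s_0$ to $s_1$ yields the claimed inequality. A naive triangle-inequality bound on $\|\delb_s f\|_{L^2(\Hcal_s)}$ would instead leave a term proportional to $s^{-1}\|f\|_{L^2(\Hcal_s)}$ on the right-hand side and force a Gr\"onwall factor, producing precisely the logarithmic loss that the sharp statement avoids; isolating the correct rearrangement so that this term reappears with the favourable sign is what makes the argument work.
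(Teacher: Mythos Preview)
Your proof is correct and follows essentially the same approach as the paper. Both arguments compute $\frac{d}{ds}\|(s/t)u\|_{L^2(\Hcal_s)}^2$ and bound the result by $Cs^{-1}\|(s/t)u\|_{L^2(\Hcal_s)}\Ec(s,u)^{1/2}$; the only cosmetic difference is that the paper writes the key relation as a multiplier identity with a divergence term $\tfrac12\delb_a\big(x^a(s/t)^2u^2\big)$ (which integrates to zero), whereas you compute $\delb_s f$ directly and isolate the sign-definite term $\int(s/t^2)f^2\,dx$ --- these two bookkeepings are related by one integration by parts in $\xb^a$.
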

\begin{proof}
	This relies on the following differential identity:
	\begin{equation}\label{eq 1 01-01-2019}
	\aligned
	(s/t)u\cdot(s/t)\left((s\delb_s + 2\xb^a\delb_a)u + u\right)
	=& \frac{1}{2}s\delb_s\big((s/t)^2u^2\big) + (s/t)u\cdot (x^a/t)s\delb_au
	\\
	&+ \frac{1}{2}\delb_a\big(x^a(s/t)^2u^2\big).
	\endaligned
	\end{equation}
	Integrate this on $\Hcal_s$ (remark that the restriction of $u$ on $\Hcal_s$ is compactly supported), we obtain:
	$$
	\aligned
	&\frac{s}{2}\frac{d}{ds}\int_{\Hcal_s}(s/t)^2u^2\ dx + \int_{\Hcal_s} (s/t)u\cdot (x^a/t)s\delb_au\ dx 
	\\
	&= \int_{\Hcal_s}(s/t)u\cdot(s/t)\left((s\delb_s + 2\xb^a\delb_a)u + u\right)\ dx.
	\endaligned
	$$
	This leads to
	$$
	\aligned
	&\frac{d}{2ds}\|(s/t)u\|_{L^2(\Hcal_s)}^2
	\\
	\leq& Cs^{-1}\|(s/t)u\|_{L^2(\Hcal_s)}\cdot\big(\|(s\delb_s + 2\xb^a\delb_a)u + u\|_{L^2(\Hcal_s)} + \sum_a\|s\delb_au\|_{L^2}\big)
	\\
	\leq& Cs^{-1}\|(s/t)u\|_{L^2(\Hcal_s)}\Ec(s,u)^{1/2}.
	\endaligned
	$$
	Thus
	$$
	\frac{d}{ds}\|(s/t)u\|_{L^2(\Hcal_s)}\leq Cs^{-1}\Ec(s,u)^{1/2}.
	$$
	Then integrate this on time interval $[s_0,s_1]$, the desired result is established.
\end{proof}

For the convenience of discussion, we introduce
$$
F_2(s_0;s,u) =  \|(s/t)u\|_{L^2(\Hcal_{s_0})} + E_2(s,u)^{1/2} + \int_{s_0}^s\tau^{-1}E(\tau,u)^{1/2}d\tau.
$$
Then the following quantities are bounded by $F_2(s,u)$:
\begin{equation}\label{eq21-15-08-2020}
\|(s/t)^2s \del_{\alpha} u\|_{L^2(\Hcal_s)},\quad \|s\delu_a u\|_{L^2(\Hcal_s)},\quad \|(s/t)u\|_{L^2(\Hcal_s)}.
\end{equation}
To see this, one only needs to remark that
$$
(s/t)^2s\del_t u = s^{-1}(s/t)\big(K_2u + su\big) - (s/t)u - 2(x^a/t)s\delu_a u
$$
and the $L^2$ norm of right-hand-side is bounded by $F_2(s_0;s,u)$. For the convenience of discussion, we introduce
$$
\Ecal_2^N(s,u) := \max_{|I|+|J|\leq N}E_2(s,\del^IL^J u), \quad \Fcal_2^N(s_0;s,u):= \max_{|I|+|J|\leq N}F_2(s,\del^IL^J u),
$$
$$
\Ecal_2^{p,k}(s,u) := \max_{|I|+|J|\leq p\atop |J|\leq k}E_2(s,\del^IL^J u), \quad \Fcal_2^{p,k}(s_0;s,u):= \max_{|I|+||J\leq p\atop |J|\leq k}F_2(s,\del^IL^J u). 
$$
\subsection{Calculation with high-order derivatives}\label{sec1-11-06-2020}
\subsubsection{Bounds independent on linear structure of equations}
In this section we briefly recall the estimates on high-order derivatives. These results are established in \cite{LM1}. However, for the convenience of the reader, we will sketch their proof in Appendix \ref{sec-appendix-C}. The constants $C$ appear in this section are determined by the order of derivatives except otherwise specified.

In this subsection, we state the ``functional inequalities'', i.e., these bounds holds for all functions defined in $\Kcal_{[s_0,s_1]}$, sufficiently regular and vanishes near light-cone. In the next Subsection, we state the bounds depending on the structure of D'Alembert operator.

We firstly state the $L^2$ bounds:
\begin{equation}\label{eq1-10-06-2020}
\|(s/t)|\del u|_{p,k}\|_{L^2(\Hcal_s)} + \||\dels u|_{p,k}\|_{L^2(\Hcal_s)} 
+ \|c|u|_{p,k}\|\leq C\Ecal_{0,c}^{p,k}(s,u)^{1/2},
\end{equation}
\begin{equation}\label{eq5-10-06-2020}
\|s|\del\dels u|_{p-1,k-1}\|_{L^2(\Hcal_s)} + \|t|\dels\dels u|_{p-1,k-1}\|_{L^2(\Hcal_s)} \leq C\Ecal_0^{p,k}(s,u)^{1/2},
\end{equation}
\begin{equation}\label{eq2-10-06-2020}
\aligned
\|(s/t)^2s|\del u|_{p,k}\|_{L^2(\Hcal_s)}& + \|s|\dels u|_{p,k}\|_{L^2(\Hcal_s)} 
+  \|(s/t)|u|_{p,k}\|_{L^2(\Hcal_s)}
\\
\leq& C\Fcal_2^{p,k}(s_0;s,u),
\endaligned
\end{equation}
\begin{equation}\label{eq6-10-06-2020}
\aligned
\|(s/t)s^2|\del\dels u|_{p-1,k-1}\|_{L^2(\Hcal_s)}& + \|(s/t)^{-1}s^2|\dels\dels u|_{p-1,k-1}\|_{L^2(\Hcal_s)}
\\
\leq& C\Fcal_2^{p,k}(s_0;s,u).
\endaligned
\end{equation}

Then the $L^{\infty}$ bounds:
\begin{equation}\label{eq3-10-06-2020}
\aligned
\|s|\del u|_{p,k}\|_{L^{\infty}(\Hcal_s)}& + \|t|\dels u|_{p,k}\|_{L^\infty(\Hcal_s)}  + \|ct|u|_{p,k}\|_{L^{\infty}(\Hcal_s)}
\\
\leq& C\Ecal_{0,c}^{p+2,k+2}(s,u)^{1/2}
\endaligned
\end{equation}
\begin{equation}\label{eq7-10-06-2020}
\|st|\del\dels u|_{p-1,k-1}\|_{L^\infty(\Hcal_s)} + \|t^2|\dels\dels u|_{p-1,k-1}\|_{L^{\infty}(\Hcal_s)} \leq C\Ecal_{0,c}^{p+2,k+2}(s,u)^{1/2},
\end{equation}
\begin{equation}\label{eq4-10-06-2020}
\aligned
\|(s/t)s^2|\del u|_{p,k}\|_{L^\infty(\Hcal_s)} + \|(s/t)^{-1}s^2|\dels u|_{p,k}\|_{L^\infty(\Hcal_s)} 
&+ \|s|u|_{p,k}\|_{L^\infty(\Hcal_s)}
\\
\leq& C\Fcal_2^{p+2,k+2}(s_0;s,u),
\endaligned
\end{equation}
\begin{equation}\label{eq8-10-06-2020}
\aligned
\|s^3|\del\dels u|_{p-1,k-1}\|_{L^{\infty}(\Hcal_s)}& + \|(s/t)^{-2}s^3|\dels\dels u|_{p-1,k-1}\|_{L^{\infty}(\Hcal_s)} 
\\
\leq& C\Fcal_2^{p+2,k+2}(s_0;s,u).
\endaligned
\end{equation}

The idea is as following . First, we bound the quantities listed \eqref{eq1 notation} by the following ``standard'' high-order derivatives $\del_{\alpha}\del^IL^J u$:
\begin{equation}\label{eq11-10-06-2020}
\aligned
|\del u|_{p,k}\leq& C\sum_{|I|+|J|\leq p\atop |J|\leq k,\alpha}|\del_{\alpha}\del^IL^Ju|,
\\
|(s/t)\del u|_{p,k}\leq& C(s/t)\sum_{|I|+|J|\leq p\atop |J|\leq k,\alpha}|\del_{\alpha}\del^IL^Ju|,
\endaligned
\end{equation} 
\begin{equation}\label{eq3 notation}
|\dels u|_{p,k}\leq C\sum_{|I|\leq p-k,a\atop |J|\leq k}|\delu_a\del^IL^Ju|
+ Ct^{-1}\sum_{ |J|\leq k,\alpha\atop 0\leq |I|\leq p-k-1}|\del_{\alpha}\del^IL^Ju|\leq Ct^{-1}|u|_{p+1,k+1},
\end{equation}
\begin{equation}\label{eq4 notation}
\aligned
|\dels\dels u|_{p,k}\leq& Ct^{-1}\sum_{|I|\leq p-k, a\atop |J|\leq k+1}|\delu_a\del^IL^Ju|
+ Ct^{-2}\sum_{ |J|\leq k+1,\alpha\atop 0\leq |I|\leq p-k-1}|\del_{\alpha}\del^IL^Ju|
\\
\leq& Ct^{-2}|u|_{p+2,k+2},
\endaligned
\end{equation}
\begin{equation}\label{eq1 lem5 notation}
|\del\dels u|_{p,k}\leq Ct^{-1}|\del u|_{p+1,k+1}.
\end{equation}

These standard forms are easily controlled by energy densities (with suitable weights, for conformal energy, apply the bounds on \eqref{eq21-15-08-2020}). Then \eqref{eq1-10-06-2020} -- \eqref{eq6-10-06-2020} are established.

For \eqref{eq3-10-06-2020} - \eqref{eq8-10-06-2020}, we need the following Klainermain-Sobolev type inequality on hyperboloids:
\begin{equation}
t|u(t,x)| \leq C\sum_{|I|+|J|\leq 2}\|\del^IL^J u\|_{L^2(\Hcal_s)},\quad (t,x)\in\Hcal_s
\end{equation} 
where $u$ is a $C^2$ function defined in $\Kcal_{[s_0,s_1]}$ vanishes near light-cone $\del\Kcal = \{r=t-1\}$. Then we regard for example for $K \in \mathcal{I}_{p,k}$:
$$
\sum_{|I|+|J|\leq 2}|\del^IL^J \big((s/t)Z^K \del_{\alpha}u\big)|\leq C\sum_{|I'|+|J'|\leq p+2\atop |J'|\leq k+2,\beta}|(s/t)\del_{\beta}\del^{I'}L^{J'}u|.
$$
This leads to $s|Z^K\del_{\alpha}u(t,x)|\leq C\Ecal_0^{p+2,k+2}(s,u)^{1/2}$. Here we have applied the relation
$$
|\del^IL^J(s/t)|\leq C(s/t)
$$
in $\Kcal$. This can be proved easily by induction. 

We also need the following bound on products and null quadratic forms in $\Hcal_{[s_0,s_1]}$. Firstly,
\begin{equation}\label{eq12-10-06-2020}
|AB|_{p,k}\leq C|A|_{p,k}|B|_{p_1,k_1} + C|A|_{p_1,k_1}|B|_{p,k} 
\end{equation}
where $p_1 = [p/2], k_1 = [k/2]$, $A,B$ sufficiently regular in $\Kcal_{[s_0,s_1]}$ and $C$ a constant determined by $p$. Furthermore, let $A$ be a (constant coefficient) quadratic null form, i.e.,
$$
A^{\alpha\beta}\xi_{\alpha}\xi_{\beta} = 0,\quad \forall \xi_0^2 - \xi_1^2 - \xi_2^2 = 0.
$$
Then
\begin{equation}\label{eq13-10-06-2020}
\aligned
|A^{\alpha\beta}\del_{\alpha}u\del_{\beta}v|_{p,k}\leq& C|A|(s/t)^2|\del u|_{p_1,k_1}|\del v|_{p,k} + C(s/t)^2|A||\del u|_{p,k}|\del v|_{p_1,k_1} 
\\
&+ C|A||\dels u|_{p_1,k_1}|\del u|_{p,k} + C|A||\dels u|_{p,k}|\del v|_{p_1,k_1}
\\
&+C|A||\del u|_{p_1,k_1}|\dels v|_{p,k} + C|A||\del u|_{p,k}|\dels v|_{p_1,k_1}
\endaligned
\end{equation}
where $|A| = \max_{\alpha,\beta}|A^{\alpha\beta}|$. The proof is sketched in Appendix \ref{subsec-null}.
\subsubsection{Bounds depending on the linear structure of equations}
We briefly recall two estimates depending on the semi-hypoerboloidal decomposition of the D'Alembert operator.
\begin{proposition}\label{prop1-14-08-2020}
	Let $u$ be a function defined in $\Hcal_{[s_0,s_1]}$, sufficiently regular. Suppose that $|I|+|J|\leq p$ and $|J|\leq k$. Then 
	\begin{equation}\label{eq1 lem Hessian-flat-zero}
	(s/t)^2|\del_\alpha\del_\beta \del^IL^J u| \leq C|\Box u|_{p,k} + Ct^{-1}|\del u|_{p+1,k+1}.
	\end{equation}
	\begin{equation}\label{eq2 lem Hessian-flat-zero}
	(s/t)^2|\del\del  u|_{p,k} \leq C|\Box u|_{p,k} + Ct^{-1}|\del u|_{p+1,k+1}.
	\end{equation}
\end{proposition}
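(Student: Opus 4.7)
The plan is to reduce the estimate to the base case $p=k=0$ and then propagate to arbitrary order by using that $\del^IL^J$ commutes with $\Box$.

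For the base case I would carry out the semi-hyperboloidal decomposition of the d'Alembertian. Substituting $\del_a = \delu_a - (x^a/t)\del_t$ into $\sum_a \del_a^2 u$ and regrouping, a direct computation gives
$$
\Box u = -(s/t)^2\del_t^2 u + \sum_a \delu_a^2 u - 2\sum_a (x^a/t)\del_t\delu_a u - \bigl[(2/t)+(r^2/t^3)\bigr]\del_t u,
$$
in which the coefficient $-(s/t)^2 = -(1-r^2/t^2)$ emerges from combining $-\del_t^2 u$ with the $(r^2/t^2)\del_t^2 u$ produced by $\sum_a (x^a/t)\del_a\del_t u$. Solving for $(s/t)^2\del_t^2 u$ yields
$$
(s/t)^2|\del_t^2 u|\leq |\Box u| + C|\dels\dels u| + C|\del\dels u| + Ct^{-1}|\del u|.
$$
The mixed and purely spatial second partials $\del_t\del_a u$, $\del_a\del_b u$ are then reduced by the same substitution $\del_a = \delu_a - (x^a/t)\del_t$ to combinations of $\del_t^2 u$, $\del\dels u$, $\dels\dels u$ and $t^{-1}\del u$, and hence satisfy the same pointwise bound. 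Finally the identity $\delu_a\delu_b u = t^{-2}L_a L_b u - (x^a/t^3)L_b u$ together with $L_b u = x^b\del_t u + t\del_b u$ (and the analogous expression for $\del_\alpha\delu_a u$) gives $|\del\dels u|+|\dels\dels u|\leq Ct^{-1}|\del u|_{1,1}$, which completes the case $p=k=0$.

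For the general case I apply the base case to $w := \del^IL^J u$. Because $\del_\alpha$ and the Lorentz boosts $L_a$ are Killing vectors of Minkowski, $[\Box,\del^IL^J]=0$, so $\Box w = \del^IL^J \Box u$ and $|\Box w|\leq |\Box u|_{p,k}$. For the remainder $t^{-1}|\del w|_{1,1}$, I commute an additional partial $\del_\beta$ through $\del^IL^J$ using $[\del_\beta,\del^I]=0$ and $[\del_\beta,L_a]=\pm\del_\gamma$; this rewrites $\del_\beta w$ as a sum of expressions $\del^{I'}L^{J'}\del_\gamma u$ of total order $|I|+|J|+1$ with at most $|J|$ boosts, so $|\del w|_{1,1}\leq C|\del u|_{p+1,k+1}$. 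This proves \eqref{eq1 lem Hessian-flat-zero}; then \eqref{eq2 lem Hessian-flat-zero} is obtained by taking the maximum over $\alpha,\beta$ and over $Z^K\in \mathcal{I}_{p,k}$, the same commutator relations allowing one to swap the second-order $\del\del$ past $Z^K$ modulo lower-order terms already absorbed in the right-hand side.

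The main obstacle is the algebraic bookkeeping in the base case: extracting exactly the factor $(s/t)^2$ in front of $\del_t^2 u$ requires tracking the cancellation between the $-\del_t^2 u$ in $\Box u$ and the $+(r^2/t^2)\del_t^2 u$ coming from the $(x^a/t)\del_a\del_t u$ term, and then showing that two tangential derivatives $\dels\dels u$ produce only one factor of $t^{-1}$, not $t^{-2}$. This second point uses crucially the fact that $L_b u = x^b\del_t u + t\del_b u$ is linear in $t\del u$, so that $t^{-2}L_a L_b u$ reduces to $t^{-1}$ times a first-order $\del$-derivative of $L_b u$, which lives inside $|\del u|_{1,1}$.
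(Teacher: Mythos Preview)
Your proposal is correct and follows essentially the same route as the paper: both isolate $(s/t)^2\del_t\del_t u$ from $\Box u$ via the semi-hyperboloidal decomposition \eqref{eq2-11-06-2020}, bound the remainder $A_m[u]$ by $C|\del u|_{1,1}$, and then pass to general $(p,k)$ by commuting $\del^IL^J$ through $\Box$. Your sign convention for $\Box$ is opposite to the paper's, but the coefficients match after accounting for this and the commutator $[\del_t,\delu_a]=-\,(x^a/t^2)\del_t$, so the computation is sound.
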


\begin{proposition}\label{prop1-fast-kg}
	Let $v$ be a regular solution to
	\begin{equation}\label{eq1 prop fast-KG}
	\Box v + c^2 v = f.
	\end{equation}
	Then 
	\begin{equation}\label{eq2 prop fast-KG}
	c^2|v|_{p,k}\leq C(s/t)^2|\del v|_{p+1,k+1} + C|f|_{p,k}.
	\end{equation}
\end{proposition}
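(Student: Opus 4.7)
The plan is to solve for $c^2 v$ algebraically from the Klein-Gordon equation using the semi-hyperboloidal decomposition of $\Box$, which isolates $\del_t\del_t v$ with prefactor $-(s/t)^2$ and relegates everything else to ``tangential'' contributions controllable by $\del v$ with a $t^{-1}$ weight. The key elementary observation is that $(s/t)^2 = 1 - (r/t)^2 \geq 2/t - 1/t^2 \geq t^{-1}$ in $\Kcal$ for $t\geq 1$, so any residual $t^{-1}$ weight is automatically dominated by the desired $(s/t)^2$ prefactor.

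For the base case $p=k=0$ I would substitute $\del_a = \delu_a - (x^a/t)\del_t$ into $\Box v = -\del_t\del_t v + \sum_a\del_a\del_a v$ and expand, using $[\del_t,\delu_a] = -(x^a/t^2)\del_t$ to commute, arriving at
$$
\Box v = -(s/t)^2\del_t\del_t v + \sum_a\delu_a\delu_a v - 2\sum_a (x^a/t)\delu_a\del_t v + \bigl(r^2/t^3 - 2/t\bigr)\del_t v.
$$
Feeding this into \eqref{eq1 prop fast-KG} and solving for $c^2 v$ produces $f$, a term $(s/t)^2\del_t\del_t v$ already of the desired form, and three remainders. The mixed term $\sum_a (x^a/t)\delu_a\del_t v$ is bounded by $Ct^{-1}|\del v|_{1,1}$ via \eqref{eq1 lem5 notation}, and the zeroth order correction $(r^2/t^3 - 2/t)\del_t v$ is trivially $\leq Ct^{-1}|\del v|$. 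For $\sum_a\delu_a\delu_a v$ I will avoid the generic bound \eqref{eq4 notation} (which would reintroduce $|v|$ on the right and destroy the argument) and instead expand
$$
\delu_a(\delu_a v) = \delu_a(x^a/t)\,\del_t v + (x^a/t)\,\delu_a\del_t v + \delu_a\del_a v,
$$
using $|\delu_a(x^a/t)| \leq Ct^{-1}$, so this term too is bounded by $Ct^{-1}|\del v|_{1,1}$ via \eqref{eq1 lem5 notation}. Combining and invoking $t^{-1}\leq C(s/t)^2$ yields the base case.

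For the general $(p,k)$ case I would commute $Z^K$ with $K\in\mathcal{I}_{p,k}$ through the equation. Since $Z^K$ is a composition of partial derivatives and Lorentzian boosts $L_a$, both Killing for Minkowski, $[\Box,Z^K]=0$ and $Z^K v$ satisfies $\Box Z^K v + c^2 Z^K v = Z^K f$. Applying the base-case bound to $Z^K v$, maximizing over $K$, and performing a brief commutator analysis to identify $|\del(Z^K v)|_{1,1}$ with a quantity dominated by $|\del v|_{p+1,k+1}$ (total order $p+2$, at most $k+1$ boosts, with commutators $[\del_\alpha,L_a]=\pm\del_\beta$ generating only terms of the same type) completes the argument.

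The delicate step is precisely the treatment of $\sum_a\delu_a\delu_a v$: since the left-hand side is $c^2|v|$, a right-hand side involving $|v|$ would not close the inequality. The resolution is to perform the product rule \emph{before} invoking any generic $\dels\dels$ bound, exploiting that $\delu_a(x^a/t)$ already carries a factor $t^{-1}$ and that the residual second-derivative pieces factor through $\dels\del$, controlled by \eqref{eq1 lem5 notation} directly in terms of $|\del v|$ rather than $|v|$.
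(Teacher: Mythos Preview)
Your proposal is correct and follows essentially the same approach as the paper. The paper's proof uses the decomposition \eqref{eq2-11-06-2020},
\[
\Box = (s/t)^2\del_t\del_t + t^{-1}A_m,\qquad |A_m[v]|\leq C|\del v|_{1,1},
\]
then solves $c^2 v = f - (s/t)^2\del_t\del_t v - t^{-1}A_m[v]$ and invokes $t^{-1}\leq C(s/t)^2$ in $\Kcal$; your hand-expansion of $\delu_a\delu_a v$ is exactly what produces the $t^{-1}\delu_a L_a$ term in the paper's $A_m$, and your explicit commutation argument for higher $(p,k)$ is what the paper leaves implicit. (Minor note: the paper's sign convention is $\Box = \del_t^2 - \Delta$, so your intermediate formula differs from \eqref{decompo-wave-1} by an overall sign, but this is irrelevant after taking absolute values.)
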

Both are based on the following semi-hyperboloidal decomposition:
\begin{equation}\label{eq2-11-06-2020}
\Box = (s/t)^2\del_t\del_t + t^{-1}\underbrace{\left((2x^a/t)\del_tL_a - \sum_a\delu_aL_a - (x^a/t)\delu_a + (2+(r/t)^2)\del_t\right)}_{A_m[u]}
\end{equation}
and $|A_m[u]|\leq C|\del u|_{1,1}$. Furthermore \eqref{eq1 prop fast-KG} leads to
$$
c^2 v = - (s/t)^2\del_t\del_t v - t^{-1}A_m[u] + f
$$
which leads to \eqref{eq2 prop fast-KG}. Remark that in $\Kcal$, $t^{-1}\leq C(s/t)^2$.

\section{Decay bounds of wave equation based on Poisson's formula}\label{sec1-08-06-2020}
As explained in Introduction, the main purpose of this estimate is to recover sufficient conical decay on $|u|_{p,k}$ without paying principle decay. Observe that in the wave equation of \eqref{eq-main}, the strong coupling terms enjoy the decay $(s/t)^ks^{-2}$ ($k$ can be large). It seems to be difficult to improve this $(-2)$ principle decay rate (recalling the $s^{-1}$ principle decay of free-linear Klein-Gordon equation). By homogeneity of the Poisson's formula, one can only expect a zero order principle decay on $|u|$. The following estimate tell us that we can do a bit better, in fact a conical decay $(s/t)$ can be preserved when the source enjoys sufficient conical decay.  
\subsection{The estimate}
In this section we will establish the following bound:
\begin{proposition}\label{prop1-01-06-2020}
	Let $u$ be a sufficiently regular solution to the following Cauchy Problem
	\begin{equation}\label{eq7-01-06-2020}
	\Box u = F,\quad 
	u(t_0,x) = u_0, \quad \del_t u(t_0,x) = u_1, \quad t_0\geq 2
	\end{equation}
	with $u_0,u_1$ being compactly supported in $\{|x| < t_0-1\}$ and sufficiently regular, $t_0\geq 2$. Suppose that
	\begin{equation}\label{eq8-01-06-2020}
	|u_0(x)| + |\del_xu_0(x)| + |u_1|\leq C_I.
	\end{equation}
	$F$ is a sufficiently regular function satisfying
	\begin{equation}\label{eq1-30-05-2020}
	F\leq C_F\mathbbm{1}_{\{|x|\leq t-1\}}t^{-3/2-\mu}(t-r)^{-1/2+\nu}, \quad 0< \mu\leq \nu\leq 1/2.
	\end{equation}
	Then for $ 9t/10\leq r\leq t-1$, 
	\begin{equation}\label{eq9-01-06-2020}
	|u(t,x)|\leq \bar{C}C_F\mu^{-1}t^{\nu-\mu}(s/t) + \bar{C}C_I s^{-1}
	\end{equation}
	where $\bar{C}$ is a universal constant.
\end{proposition}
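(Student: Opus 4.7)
The plan is to decompose $u = u_H + u_I$ via Duhamel, where $u_H$ solves the homogeneous Cauchy problem with data $(u_0,u_1)$ at $t=t_0$ and $u_I$ solves $\Box u_I = F$ with zero data. The homogeneous piece should yield the $\bar C C_I s^{-1}$ term; the inhomogeneous piece should yield the $\bar C C_F\mu^{-1}t^{\nu-\mu}(s/t)$ term.

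For $u_H$, I would apply the $2+1$-dimensional Kirchhoff--Poisson formula
\begin{equation*}
u_H(t,x) = \frac{1}{2\pi}\,\partial_t\!\int_{|y-x|<t-t_0}\!\frac{u_0(y)\,dy}{\sqrt{(t-t_0)^2-|y-x|^2}} + \frac{1}{2\pi}\!\int_{|y-x|<t-t_0}\!\frac{u_1(y)\,dy}{\sqrt{(t-t_0)^2-|y-x|^2}}.
\end{equation*}
On the support $\{|y|\leq t_0-1\}$ and under $r\geq 9t/10$, one has $(t-t_0)+|y-x|\sim t$ and $(t-t_0)-|y-x|\sim t-r$, so the denominator is $\sim\sqrt{t(t-r)}\sim s$ and the support has area $O(1)$. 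Using \eqref{eq8-01-06-2020} for both $u_0$ and $\del_x u_0$ (the latter needed after integrating by parts to handle the $\partial_t$ in the first term) gives $|u_H|\leq\bar C C_I s^{-1}$.

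For $u_I$, I would start from the retarded representation
\begin{equation*}
u_I(t,x) = \frac{1}{2\pi}\int_{t_0}^t\!\int_{|y-x|<t-\tau}\frac{F(\tau,y)\,dy\,d\tau}{\sqrt{(t-\tau)^2-|y-x|^2}},
\end{equation*}
insert \eqref{eq1-30-05-2020}, and convert the forward-cone weight via the identity $\tau-|y|=(\tau^2-|y|^2)/(\tau+|y|)$. Since $\tau+|y|\leq 2t$ and $-\tfrac12+\nu\leq 0$, this gives $(\tau-|y|)^{-1/2+\nu}\leq (2t)^{1/2-\nu}(\tau^2-|y|^2)^{-1/2+\nu}$, exchanging the singularity for a cleaner hyperboloidal weight. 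Next, placing $x=(r,0)$ by rotational symmetry and passing to polar coordinates $y=(p\cos\theta,p\sin\theta)$, the inner spatial integral splits as a $p$-integral over $p\in[|r-(t-\tau)|,\min(r+(t-\tau),\tau)]$ times an angular integral $\int_0^{\arccos M(p)}d\theta/\sqrt{2rp(\cos\theta-M(p))}$ with $M(p)=(p^2+r^2-(t-\tau)^2)/(2rp)$; a standard substitution shows the angular piece is controlled by $(rp)^{-1/2}$ uniformly, with no logarithmic loss.

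The main obstacle is the clean extraction of the conical factor $(s/t)$ from the remaining $(p,\tau)$-integration. The key geometric observation is that the past cone $|y-x|=t-\tau$ from $(t,x)$ and the forward cone $|y|=\tau$ from the origin meet only for $\tau\in[(t-r)/2,\,(t+r)/2]$, a band of width $r\sim t$; however, the worst singular contribution is concentrated in a much thinner subband around $\tau\sim (t+r)/2$, whose effective thickness weighted by $(\tau^2-|y|^2)^{-1/2+\nu}$ collapses to an area scaling like $s^2/t$, and, after dividing by the extra $s/t$ used to control the kernel near the past cone, produces a single clean factor $(s/t)$. Splitting the $\tau$-integral at $(t+r)/2$ and combining the resulting bound on $J(\tau)$ with $\tau^{-3/2-\mu}$ gives $\int_{t_0}^t\tau^{-1-\mu-\nu}\,d\tau\lesssim \mu^{-1}t_0^{-\mu-\nu}$; multiplying by the prefactor $t^{1/2-\nu}$ from the weight conversion and by $t^\nu$ absorbed during the angular integration yields $\bar C C_F\mu^{-1}t^{\nu-\mu}(s/t)$. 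I expect the delicate bookkeeping of exponents in this last step, in particular verifying the sharp $(s/t)$-extraction on the near-cone subband and showing that the far-cone piece is harmless, to be the most subtle part of the argument.
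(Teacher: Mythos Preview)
Your overall strategy matches the paper's: decompose $u$ by Duhamel into a homogeneous piece carrying the data and an inhomogeneous piece carrying $F$, then analyze each via the $2$D Poisson formula in polar coordinates. The paper's proof is exactly this (Lemmas~\ref{lem1-13-06-2020} and~\ref{lem3 Poisson}). But several of your technical claims are either too strong or do not close, and this is where the real work lies.

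\textbf{Homogeneous part.} Your estimate $(t-t_0)-|y-x|\sim t-r$ is only valid when $t-r$ is large compared to $t_0$. Since $|y|\le t_0-1$, one has $(t-t_0)-|y-x|\ge t-r-2t_0+1$, which can be much smaller than $t-r$ when $t-r\lesssim t_0$. In that regime the kernel is not $\sim s^{-1}$; what saves you instead is that the angular aperture subtended by the support $\{|y|<t_0-1\}$ as seen from $x$ is $O(t_0/t)$, and this smallness compensates the degenerate denominator. The paper makes this case split explicit ($t-r\ge 4t_0$ versus $t-r\le 4t_0$) and you should too.

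\textbf{Inhomogeneous part.} Three issues. First, your claim that the angular integral is ``controlled by $(rp)^{-1/2}$ uniformly'' is not true: the integral $\int_0^{\theta_0}(\cos\theta-\cos\theta_0)^{-1/2}d\theta$ is only $O(1)$ when $\theta_0\le\pi/2$; for $\theta_0$ close to $\pi$ (which does occur, precisely when the origin sits inside the past disc $|y-x|<t-\tau$ and $\rho$ is small) it is $\sim(\sin\theta_0)^{-1/2}$, and this extra factor must be re-absorbed by a further $\rho$-integration (see Lemma~\ref{lem1-31-05-2020} and Case~II of the paper). Second, your weight conversion $(\tau-|y|)^{-1/2+\nu}\le (2t)^{1/2-\nu}(\tau^2-|y|^2)^{-1/2+\nu}$ is correct but lossy for small $\tau$, since $\tau+|y|$ can be $\ll t$; the paper avoids this by rescaling $\lambda=\tau/t$, $z=y/t$ and keeping the weight as $(\lambda-|z|)^{-1/2+\nu}$, which is the natural scale. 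Third, your final exponent count does not close: $\int_{t_0}^t\tau^{-1-\mu-\nu}d\tau\cdot t^{1/2-\nu}\cdot t^{\nu}\cdot(s/t)$ gives $t^{1/2}t_0^{-\mu-\nu}(s/t)$, not $t^{\nu-\mu}(s/t)$. The correct mechanism is different: after rescaling, the $\rho$-integral in the dominant regime has length $\sim(t-r)/t$, producing a factor $((t-r)/t)^{1/2+\nu}$; then the $\lambda$-integral of $\lambda^{-1-\mu}$ over $[(t-r)/t,\,O(1)]$ produces $\mu^{-1}((t-r)/t)^{-\mu}$; the product is $\mu^{-1}(s/t)^{1+2(\nu-\mu)}$, and the prefactor $t^{\nu-\mu}$ comes cleanly from the rescaling. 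Your two-way split at $\tau=(t+r)/2$ is too coarse to see this; the paper's four-case geometric split (according to how the discs $\{|z|<\lambda\}$ and $\{|z-x/t|<1-\lambda\}$ overlap) is what isolates the critical regime and makes the bookkeeping honest.
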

One needs to decompose $u$ as
$$
\Box u_F = F,\quad u_F(t_0,x) = \del_t u_F(t_0,x) = 0
$$
and
$$
\Box u_I = 0,\quad u_I(t_0,x) = u_0,\ \del_t u_I(t_0,x) = u_1.
$$
Obviously, $u = u_F+u_I$ by uniquenss theory. The bound on $u_I$ is established in Lemma \ref{lem1-13-06-2020} and the bound on $u_F$ is given by Lemma \ref{lem3 Poisson}.

\begin{remark}
	The decay on $F$ can be written into the following equivalent form
	$$
	|F|\leq C_F\mathbbm{1}_{\{|x|\leq t-1\}}(s/t)^{1+\mu+\nu}s^{-2-\mu+\nu}.
	$$
\end{remark}

\begin{remark}
	One may compare this result with the Lemma 3.2 and Lemma 3.3 in \cite{Hou-2020}. Here we regard an endpoint case with no loss of principle decay (or equivalently, the homogeneity. Of course here we have the restriction on the supports of initial data and source term). This is because in our case the principle decay has no margin compared with the pure wave case, where the conformal invariance will lead to stronger principle decay.
\end{remark}

\begin{lemma}\label{lem1-13-06-2020}
	Let $u$ be the $C^2$ solution to the following Cauchy problem of free-linear wave equation:
	\begin{equation}
	\Box u = 0,\quad u(t_0,x) = u_0, \quad \del_tu(t_0,x) = u_1, \quad t_0\geq 2
	\end{equation}
	with $u_0,u_1$ sufficiently regular and compactly supported in $\{|x|<t_0-1\}$. Suppose that
	$$
	|u_0(x)| + |\del_xu(x)| + |u_1(x)|\leq C_I.
	$$
	Then for $(t,x)\in\Kcal = \{r<t-1\}$,
	\begin{equation}\label{eq1-17-08-2020}
	|u(t,x)|\leq \bar{C}C_I t_0^2 s^{-1}.
	\end{equation}
	Here $\bar{C}$ is a universal constant.
\end{lemma}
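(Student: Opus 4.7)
The plan is to invoke the classical Poisson representation of the free 2D wave equation and estimate the resulting integral using compact support of the data. Starting from Poisson's formula with data at $t=t_0$,
\[
u(t,x) = \frac{1}{2\pi(t-t_0)}\int_{|y|\leq t-t_0}\frac{u_0(x+y) + y\cdot\nabla u_0(x+y) + (t-t_0) u_1(x+y)}{\sqrt{(t-t_0)^2-|y|^2}}\,dy,
\]
I would change variables $z = x+y$ and use $u_0 = u_1 = 0$ outside $\{|z|\leq t_0-1\}$ to obtain
\[
|u(t,x)| \leq \frac{CC_I}{t-t_0}\int_{D}\frac{1+|z-x|+(t-t_0)}{\sqrt{(t-t_0)^2-|z-x|^2}}\,dz,\quad D := \{|z|\leq t_0-1\}\cap\{|z-x|\leq t-t_0\}.
\]
For $t-t_0\geq 1$ the numerator is at most $3(t-t_0)$, reducing the task to bounding
\[
I := \int_D \frac{dz}{\sqrt{(t-t_0)^2-|z-x|^2}}
\]
by $\bar{C}\,t_0^2/s$ with a universal constant.

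I would then pass to polar coordinates centered at $x=(r,0)$: writing $z-x=\sigma(\cos\phi,\sin\phi)$, the integrand becomes $\sigma\,d\sigma\,d\phi/\sqrt{(t-t_0)^2-\sigma^2}$ with explicit $\sigma$-primitive $-\sqrt{(t-t_0)^2-\sigma^2}$. The constraint $|z|\leq t_0-1$ confines $\phi$ to an interval of width $\sim t_0/r$ around $\pi$ (for $r\geq t_0$) and $\sigma$ to $[\sigma_-(\phi),\sigma_+(\phi)]$ with $\sigma_\pm = -r\cos\phi\pm\sqrt{(t_0-1)^2-r^2\sin^2\phi}$. In the deep-interior regime $t-r\geq 2t_0$ one has $\sigma_+\leq t-t_0$, so integrating $\sigma$ yields
\[
\sqrt{(t-r)(t+r-2t_0)} - \sqrt{(t-r-2t_0)(t+r)} \;=\; \frac{4t_0 r}{\sqrt{(t-r)(t+r-2t_0)} + \sqrt{(t-r-2t_0)(t+r)}} \;\sim\; \frac{2t_0 r}{s},
\]
which multiplied by the angular width $\sim t_0/r$ gives $I\lesssim t_0^2/s$. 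In the near-light-cone regime $t-r<2t_0$, $\sigma_+$ is truncated to $t-t_0$ and the $\sigma$-integral equals $\sqrt{(t-r)(t+r-2t_0)}\leq s$; combined with angular width $\sim t_0/r$ this gives $I\lesssim t_0 s/r$, which is $\lesssim t_0^2/s$ since $s^2\lesssim tt_0\lesssim rt_0$ (using $r\geq t/2$ once $t$ exceeds a fixed multiple of $t_0$). The remaining edge cases ($t-t_0<1$, $r<t_0$, or $t\lesssim t_0$) are settled directly: the Poisson integrand is bounded by a universal multiple of $C_I$ on a region of area $O(t_0^2)$, giving $|u|\leq CC_I t_0$, which is absorbed into $\bar{C}\,C_I t_0^2/s$ via $s\leq t\leq t_0^2$ for $t_0\geq 2$.

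\textbf{The main obstacle} is the sharp evaluation of $I$ in each regime. A crude pointwise bound of $1/\sqrt{(t-t_0)^2-|z-x|^2}$ by its supremum on $D$ falls short of $t_0^2/s$ by a factor $\sqrt{t}/s$ in the deep interior and by $\sqrt{t_0}$ near the light cone. The gain comes from the explicit cancellation $\sqrt{(t-r)(t+r-2t_0)}-\sqrt{(t-r-2t_0)(t+r)}\sim 2t_0 r/s$ in the deep-interior regime and from the narrowness (in $\phi$) of the sliver of $D$ where $z$ approaches the singular locus $|z-x|=t-t_0$ in the near-light-cone regime; matching both sides of the split requires that all implicit constants be absolute (not depend on $t_0$), which forces one to compute the $\sigma$-integral essentially exactly rather than estimate it term by term.
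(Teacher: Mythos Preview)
Your approach is essentially the same as the paper's: both use the Poisson representation, split into a deep-interior regime (the paper uses $t-r\geq 4t_0$) and a near-light-cone regime, and in the latter compute the radial integral in polar coordinates centered at $x$ using the explicit primitive $-\sqrt{(t-t_0)^2-\sigma^2}$ together with the angular restriction $|\pi-\phi|\lesssim t_0/r$.

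Two minor points are worth flagging. First, your assessment of the ``main obstacle'' in the deep-interior regime is off: the crude pointwise bound \emph{does} work there, and this is exactly what the paper uses. When $t-r\geq 4t_0$, every $z$ with $|z|\leq t_0-1$ satisfies $|z-x|\leq r+t_0-1<t-t_0$ (so $D$ is the full support disc) and
\[
(t-t_0)^2-|z-x|^2\;\geq\;(t-t_0)^2-(r+t_0-1)^2\;=\;(t-r-2t_0+1)(t+r-1)\;\geq\;\tfrac14\,s^2,
\]
whence $I\leq 2s^{-1}|D|\leq \bar C\,t_0^2/s$ immediately. Your exact difference-of-square-roots computation $\sqrt{(t-r)(t+r-2t_0)}-\sqrt{(t-r-2t_0)(t+r)}=4t_0r/(a+b)$ is correct but unnecessary.

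Second, your edge case ``$r<t_0$'' is not handled correctly as written. If $r<t_0$ but $t$ is large, the Poisson integrand is \emph{not} bounded by a universal multiple of $C_I$ (it is $\sim C_I/(t-t_0)$), and the chain ``$|u|\leq CC_It_0$ together with $s\leq t\leq t_0^2$'' does not yield $t_0^2/s$. This case is in fact already absorbed by the deep-interior pointwise bound above, which needs no lower bound on $r$; once you use that, the only genuine edge case left is $t\lesssim t_0$, where your argument is fine.
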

\begin{proof}[Proof of Lemma \ref{lem1-13-06-2020}]
	We need to recall the following Poisson's formula:
	\begin{equation}
	\aligned
	u_I(t,x) =& \frac{1}{2\pi}\int_{|y-x|<t-t_0}\frac{u_1(y)dy}{\sqrt{(t-t_0)^2-|y-x|^2}} 
	\\
	&+ \frac{1}{2\pi}\del_t \Big(\int_{|y-x|<t-t_0}\frac{u_0(y)dy}{\sqrt{(t-t_0)^2-|y-x|^2}} \Big)
	\\
	=:& I_1(t,x) + I_2(t,x).
	\endaligned
	\end{equation}
	The remaining task is to bound the two integrals in right-hand-side. 
	
	For $I_1$, remark that due to the support of $u_1$, we only need to consider the case when $|y|\leq t_0-1$. Then,
	\\
	- when $t-r\geq 4t_0$, $t-r-2t_0\geq \frac{1}{2}(t-r)$. Remark that $t+r\geq t\geq t_0\geq 2$ leads to $t+r - 1 \geq \frac{1}{2}(t+r)$. Furthermore, $|y-x|\leq t_0+r-1$. Then
	$$
	\aligned
	(t-t_0)^2 - |y-x|^2\geq& (t-t_0)^2 - (r+t_0-1)^2 
	\\
	=& (t-r-2t_0+1)(t + r-1)\geq \frac{1}{4}(t-r)(t+r)
	\\
	=& \frac{1}{4}(t^2-r^2).
	\endaligned
	$$ 
	So 
	$$
	\aligned
	|I_1(t,x)|
	\leq \bar{C}C_I\int_{|y-x| < t-t_0 }\mathbbm{1}_{\{|y|< t_0-1\}}(t^2-r^2)^{-1/2}dy
	\leq \bar{C}C_It_0^2s^{-1}
	\endaligned
	$$
	- when $1< t-r\leq 4t_0$. In this case, we need to introduce the following parametrization on the plane $\{t=t_0\}$. We only worry about the bound where $t$ is large, so we firstly suppose that $t\geq 5t_0$. In this case $r>t-4t_0  \geq t_0 \geq 1$. Without loss of generality, we take $x = (r,0)$ on the plane $\{t=t_0\}$. Let $\theta\in(-\pi,\pi)$ be the angle from the vector $(1,0)$ to the vector $(y-x)$ and $\rho = |y-x|$. Then $dy = \rho d\theta d\rho$.

	Then
	$$
	|I_1(t,x)|\leq \bar{C}C_I\int_{\RR^2}\frac{\mathbbm{1}_{\{r-t_0+1<\rho<t-t_0\}}\mathbbm{1}_{\{|\pi-\theta|<\theta_0\}}}{\sqrt{(t-t_0)^2 - \rho^2}}dy
	$$ 
	here remark that the set $\{r-1 < \rho<t-t_0\}\cap\{|\pi-\theta|<\theta_0\}$ covers the set $\{|y-x|<t-t_0\}\cap \{|y|<t_0-1\}$ where $\theta_0\in (0,\pi)$ such that $\sin\theta_0 = (t_0-1)/r$. Remark that $0\leq \theta_0\leq \pi/2$, $\theta_0 \leq C\sin\theta_0\leq C\frac{t_0-1}{r}$. Recall that
	$$
	t-r\leq 4t_0\, \Rightarrow\, r\geq t-4t_0.  
	$$
	On the other hand, $t\geq 5t_0\,\Rightarrow\, t_0\leq t/5\Rightarrow t-4t_0\geq t/5$. So we obtain:
	\begin{equation}\label{eq1-10-09-2020}
	r\geq t/5
	\end{equation}
	and thus
	\begin{equation}\label{eq2-10-09-2020}
	\theta_0\leq \bar{C}\sin\theta_0\leq \bar{C}t_0/t.
	\end{equation}
	Then
	$$
	\aligned
	|I_1(t,x)|\leq& \bar{C}C_I\int_{r-1<\rho<t-t_0}\int_{\pi - \theta_0}^{\pi}\frac{\rho d\theta d\rho}{\sqrt{(t-t_0)^2 - \rho^2}}
	\\
	= & \bar{C}C_I \theta_0 \int_{r-1}^{t-t_0}\big((t-t_0)^2 - \rho^2\big)^{-1/2}\rho d\rho
	\\
	=& \bar{C}C_I \theta_0(t-t_0) \int_{\big(\frac{r-1}{t-t_0}\big)^2}^{1}\big(1 - w\big)^{-1/2}d w
	\endaligned
	$$
	with $w = \Big(\frac{\rho}{t-t_0}\Big)^2$. Then
	$$
	|I_1(t,x)|\leq \bar{C}C_I\theta_0(t-t_0)\Big(1-\Big(\frac{r-1}{t-t_0}\Big)^2\Big)^{1/2}\leq \bar{C}C_It_0^{1/2}\theta_0(t-t_0)^{1/2}.
	$$
	This is because
	$$
	\aligned
	1 - \Big(\frac{r-1}{t-t_0}\Big)^2 =& \frac{(t-t_0- (r-1))(t-t_0+r-1)}{(t-t_0)^2}
	\leq \bar{C}\frac{t-t_0-(r-1)}{t-t_0}
	\\
	\leq& \bar{C}\frac{3t_0+1}{t-t_0}
	\endaligned
	$$
	where for the first inequality,
	$$
	t-r\geq 4t_0 \Rightarrow t-t_0\geq r+3t_0\geq r-1 \Rightarrow \frac{t-t_0 + r-1}{t-t_0}\leq 2.
	$$
	So we conclude that when $t-r\leq 4t_0$ and $t\geq 5t_0$, 
	\begin{equation}\label{eq1-13-06-2020}
	|I_1(t,x)|\leq \bar{C}C_It_0^{3/2}t^{-1/2}.
	\end{equation}
	Now remark that $1\leq (t-r)^{1/2}\leq 2t_0^{1/2}$, \eqref{eq1-13-06-2020} leads to
	\begin{equation}\label{eq2-13-06-2020}
	|I_1(t,x)|\leq \bar{C}C_It_0^2t^{-1/2}(t-r)^{1/2}(t-r)^{-1/2}\leq \bar{C}C_It_0^2s^{-1}
	\end{equation}
	when $t-r\leq 4t_0$ and $t\geq 5t_0$.
	
	When $t_0\leq t\leq 5t_0$, remark that on the region $\{t_0\leq t\leq 5t_0\}$, $\sqrt{2t_0-1}\leq s \leq 5t_0$. So we conclude that
	\begin{equation}\label{eq3-13-06-2020}
	|u(t,x)|\leq \bar{C}C_It_0^2s^{-1}.
	\end{equation}
	
	For $I_2$ the proof is similar. Remark that because $u_0$ is compactly supported and sufficiently regular,
	$$
	\aligned
	&I_2(t,x) 
	\\
	=& \frac{1}{2\pi}\del_t\Big((t-t_0)\int_{|z|<1}\frac{u_0(x+(t-t_0)z)}{\sqrt{1-|z|^2}}dz\Big)
	\\
	=& \frac{1}{2\pi}\int_{|z|<1}\frac{u_0(x+(t-t_0)z)}{\sqrt{1-|z|^2}}dz 
	+ \frac{1}{2\pi}(t-t_0)\int_{|z|<1}\frac{z^a\del_au(x+(t-t_0)z)}{\sqrt{1-|z|^2}}dz
	\endaligned
	$$
	where $z = \frac{y-x}{t-t_0}$. Then we change back the integral variable and find:
	$$
	\aligned
	I_2(t,x) =& \frac{1}{2\pi(t-t_0)}\int_{|y-x|<t-t_0}\frac{u_0(y)dy}{\sqrt{(t-t_0)^2-|y-x|^2}}
	\\
	&+ \frac{1}{2\pi}\int_{|y-x|<t-t_0}\frac{\frac{y^a-x^a}{t-t_0}\del_au(y)dy}{\sqrt{(t-t_0)^2 - |y-x|^2}}.
	\endaligned
	$$
	
	Remark that both $u_0$ and $\frac{y^a-x^a}{t-t_0}\del_a u_0$ are still supported in $|y|<t_0-1$ and, when $t\geq 2t_0$, they are bounded by $\bar{C}C_I$. Then the bound on $I_1$ is applicable. Then the desired bound is established.
	
	One may worry about the first term with the singular denominator $t-t_0$ when $t\rightarrow t_0^+$. In fact we only need to bound the solution for large $t$. When $t_0\leq t\leq 2t_0$, by local theory the solution is bounded then it can be bounded by  \eqref{eq1-17-08-2020}
\end{proof}

\begin{lemma}\label{lem3 Poisson}
	Let $u$ be a $C^2$ solution to the Cauchy problem 
	$$
	\Box u = F,\quad u(t_0,x)  = \del_t u(t_0,x ) = 0
	$$
	with $t_0\geq 2$ and $F$ satisfying \eqref{eq1-30-05-2020}.
	Then the following bound holds for $9t/10\leq |x|<t-1$:
	\begin{equation}\label{eq1 lem3 Poisson}
	|u(t,x)|\leq \bar{C}C_F\mu^{-1}t^{\nu-\mu}(s/t)
	\end{equation}
	where $\bar{C}$ is a universal constant.
\end{lemma}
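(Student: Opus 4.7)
The approach is a direct estimate via the $2+1$-dimensional Poisson/Duhamel representation. Writing $x=(r,0)$ and using polar coordinates $y=\ell(\cos\psi,\sin\psi)$, which is natural since both the pointwise bound on $|F|$ and its support depend on $y$ only through $|y|=\ell$, one has
$$u_F(t,x)=\frac{1}{2\pi}\int_{t_0}^{t}\int_{0}^{\tau-1}\int_{\cos\psi\ge C(\tau,\ell)}\frac{F(\tau,y)\,\ell\,d\psi\,d\ell\,d\tau}{\sqrt{(t-\tau)^2-|y-x|^2}},$$
with $C(\tau,\ell)=\frac{\ell^2+r^2-(t-\tau)^2}{2r\ell}$. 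The crucial computation is the $\psi$-integration: after setting $w=\cos\psi$ and then $w=C+(1-C)s$, and splitting $s\in[0,1]$ at $1/2$, one obtains the closed-form bound
$$\int_{\{\cos\psi\ge C\}}\frac{d\psi}{\sqrt{(t-\tau)^2-|y-x|^2}}\le\frac{\bar{C}}{\sqrt{r\ell(1+C)}}=\frac{\bar{C}}{\sqrt{(r+\ell)^2-(t-\tau)^2}},$$
using the identity $r\ell(1+C)=\tfrac{1}{2}\bigl((r+\ell)^2-(t-\tau)^2\bigr)$.

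Next I would change variables from $\ell$ to $\bar q:=\tau-\ell$. The constraints $\ell\le\tau-1$, $|\ell-r|\le t-\tau$ and $\ell\ge 0$ translate into $\bar q\in[\max(1,2\tau-t-r),\min(q,\tau)]$ with $q:=t-r$. In the regime $r\ge 9t/10$ one has $(r+\ell)+(t-\tau)\ge 2r\gtrsim t$ uniformly, so the angular bound contributes a clean factor $t^{-1/2}$, while the singular factor is $(r+\ell)-(t-\tau)=2\tau-q-\bar q$. Combining this with $|F|\le C_F\tau^{-3/2-\mu}\bar q^{-1/2+\nu}$ and $\ell\le\tau$, the proof reduces to bounding
$$\frac{\bar{C}C_F}{\sqrt{t}}\int_{t_0}^{t}\tau^{-1/2-\mu}\int\frac{\bar q^{-1/2+\nu}\,d\bar q}{\sqrt{2\tau-q-\bar q}}\,d\tau.$$

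In the main sub-region $\tau\ge q$, the inner integral is bounded by $Cq^{1/2+\nu}/\sqrt{\tau-q}$, and the substitution $\tau=q(1+u)$ recognises the outer integral as a Beta function: $q^{-\mu}\int_{0}^{\infty}u^{-1/2}(1+u)^{-1/2-\mu}\,du=q^{-\mu}B(1/2,\mu)\lesssim\mu^{-1}q^{-\mu}$. Multiplying gives $\bar{C}C_F\mu^{-1}t^{-1/2}q^{1/2+\nu-\mu}$, and using $(s/t)\sim\sqrt{q/t}$ together with $\nu\ge\mu$ (so that $(q/t)^{\nu-\mu}\le 1$), this is bounded by $\bar{C}C_F\mu^{-1}t^{\nu-\mu}(s/t)$, as claimed. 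The residual sub-regions ($\tau<q$, or $\tau$ so large that the lower limit $2\tau-t-r$ is active) are treated identically with the same substitutions and produce no worse a contribution.

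The principal obstacle is obtaining the sharp closed-form angular bound in terms of $(r+\ell)^2-(t-\tau)^2$: any cruder bound such as $(r\ell)^{-1/2}$ introduces an uncompensatable factor of $\sqrt{t/q}$ that cannot be absorbed into $(s/t)$. A secondary technical point is recognising the outer $\tau$-integral as a Beta function, which is what produces the sharp $\mu^{-1}$ without any logarithmic loss.
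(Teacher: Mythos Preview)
Your approach is genuinely different from the paper's and, where it applies, cleaner. The paper rescales to $\lambda=\tau/t$, $z=y/t$ and then performs a four-case geometric analysis (Cases I--IV) according to the relative position of the discs $B_O(\lambda)=\{|z|<\lambda-t^{-1}\}$ and $B_A(\lambda)=\{|z-x/t|<1-\lambda\}$, with the angular integral handled via a technical lemma equivalent to your closed-form bound. Your route---perform the $\psi$-integral first to obtain $\bar C\big((r+\ell)^2-(t-\tau)^2\big)^{-1/2}$, then change to $\bar q=\tau-\ell$ and recognise $B(1/2,\mu)$---collapses most of that case analysis into a single computation, and extracts the sharp $\mu^{-1}$ more transparently than the paper's Case~III estimate.

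There is, however, one genuine omission. Your angular bound is only valid when $C\ge -1$, i.e.\ when $r+\ell\ge t-\tau$, equivalently $\bar q\le 2\tau-q$. In the residual region $\tau<q$ this constraint cuts into your stated $\bar q$-range: for $\tau<q/2$ it is \emph{never} satisfied, and for $q/2\le\tau<q$ only $\bar q<2\tau-q$ is covered. On the complementary set the expression $(r+\ell)^2-(t-\tau)^2$ is negative, so ``treated identically with the same substitutions'' cannot be right there. This is exactly the paper's Cases~I and~II (the part $I_{t,x}^1$), where the full circle lies inside the light cone and the angular integral has no endpoint singularity. The fix is immediate: on $\{r+\ell<t-\tau\}$ bound the full-circle angular integral by
\[
\frac{2\pi}{\sqrt{(t-\tau)^2-(r+\ell)^2}}\;\le\;\frac{\bar C}{\sqrt{t\,(q+\bar q-2\tau)}},
\]
using $(t-\tau)+(r+\ell)\ge r\gtrsim t$; then the same $\bar q$- and $\tau$-integrations (with $\int_1^\tau\bar q^{-1/2+\nu}d\bar q\le C\tau^{1/2+\nu}$ and $\int_{t_0}^{q/2}\tau^{\nu-\mu}(q-2\tau)^{-1/2}d\tau\le Cq^{1/2+\nu-\mu}$) yield a contribution $\bar C C_F\,t^{-1/2}q^{1/2+\nu-\mu}$, which is dominated by your main term. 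With this one extra paragraph your argument is complete and shorter than the paper's.
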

\begin{proof}
	The proof relies on the following Poisson's formula:
	\begin{equation}\label{eq1-04-06-2020}
	u(t,x) = \frac{1}{2\pi}\int_{t_0}^t\int_{|y-x|<t-\tau}\frac{F(\tau,y)}{\sqrt{(t-\tau)^2 - |y-x|^2}}dy\ d\tau.
	\end{equation}
	Taking \eqref{eq1-30-05-2020}, we obtain:
	$$
	|u(t,x)|\leq \bar{C}C_F\int_{t_0}^t\int_{|y-x|<t-\tau\atop |y|<\tau-1}
	\frac{\tau^{-3/2-\mu}(\tau - |y|)^{-1/2+\nu}}{\sqrt{(t-\tau)^2 - |y-x|^2}}dy\ d\tau.
	$$
	Denote by 
	$$
	\lambda = \tau/t,\quad z = y/t, 
	$$
	the above inequality is written as
	\begin{equation}
	|u(t,x)|\leq \bar{C}C_Ft^{\nu-\mu} \int_{t_0/t}^1 \lambda^{-3/2-\mu}
	\int_{|z-x/t|\leq 1-\lambda\atop |z|<\lambda-t^{-1}} \frac{(\lambda - |z|)^{-1/2+\nu}dz \,d\lambda}{\big((1-\lambda)^2 - |z-x/t|^2\big)^{1/2}}.
	\end{equation}
	We denote by 
	$$
	I_{t,x}(\lambda) := \lambda^{-3/2-\mu}\int_{|z-x/t|\leq 1-\lambda\atop |z|<\lambda-t^{-1}}\!\!\!\!\!\!\!\!
	(\lambda - |z|)^{-1/2+\nu}\big((1-\lambda)^2 - |z-x/t|^2\big)^{-1/2}dz \,d\lambda
	$$
	and following the technical Lemma \ref{lem1-30-05-2020}, the desired result is obtained.
\end{proof}

\begin{lemma}\label{lem1-30-05-2020}
	Following the above definition, for $9t/10\leq |x| < t-1$, 
	\begin{equation}
	\int_{t_0/t}^1I_{t,x}(\lambda)d\lambda \leq \bar{C}C_F \mu^{-1} (s/t)^{1+2(\nu-\mu)}.
	\end{equation}
\end{lemma}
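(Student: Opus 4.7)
Set $a = x/t$, $\rho := |a| = r/t \in [9/10, 1)$, and $\sigma := s/t = \sqrt{1-\rho^2}$; note $1-\rho = \sigma^2/(1+\rho) \sim \sigma^2/2$. My strategy is to split the $\lambda$-integral into three ranges according to the geometric configuration of the two discs $B(0, \lambda)$ and $B(a, 1-\lambda)$: (I) $\lambda \in [t_0/t, (1-\rho)/2]$, where $B(0, \lambda) \subset B(a, 1-\lambda)$ so the domain $D(\lambda)$ is the full disc $\{|z| < \lambda\}$; (II) $\lambda \in [(1-\rho)/2, (1+\rho)/2]$, where $D(\lambda)$ is a genuine lens; (III) $\lambda \in [(1+\rho)/2, 1]$, where $B(a, 1-\lambda) \subset B(0, \lambda)$ so $D(\lambda)$ is the full disc around $a$. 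In each range I estimate $J(\lambda) := \int_{D(\lambda)}(\lambda-|z|)^{-1/2+\nu}((1-\lambda)^2-|z-a|^2)^{-1/2}dz$ in coordinates adapted to the geometry.

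In Range (I), I pass to polar coordinates around the origin. Since $|z-a| \leq \rho+\lambda < 1-\lambda$ strictly, the second weight has no singularity and is bounded by $((1-\rho-2\lambda)(1+\rho))^{-1/2}$; the remaining $(\lambda-|z|)^{-1/2+\nu}$ integrates to $C\lambda^{3/2+\nu}$ via a Beta integral. Hence $I_{t,x}(\lambda) \lesssim \lambda^{\nu-\mu}(1-\rho-2\lambda)^{-1/2}$, and bounding $\lambda^{\nu-\mu} \leq ((1-\rho)/2)^{\nu-\mu}$ (using $\nu \geq \mu$) and integrating the other factor produces a contribution $\lesssim (1-\rho)^{1/2+\nu-\mu} \leq \sigma^{1+2(\nu-\mu)}$. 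In Range (III), I use polar coordinates around $a$ together with the substitution $|z-a| = (1-\lambda)\sin\psi$ to absorb the square root; the constraint $|z| < \lambda$ is automatic since $|z| \leq \rho+(1-\lambda) \leq \lambda$, and $(\lambda-|z|)^{-1/2+\nu} \leq (2\lambda-\rho-1)^{-1/2+\nu}$. After angular integration I get $I_{t,x}(\lambda) \lesssim (1-\lambda)(2\lambda-\rho-1)^{-1/2+\nu}$, which integrates over $[(1+\rho)/2, 1]$ to $\lesssim (1-\rho)^{3/2+\nu} \leq \sigma^{3+2\nu} \leq \sigma^{1+2(\nu-\mu)}$.

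Range (II) is the main contributor and the source of the announced $\mu^{-1}$. Place $a = (\rho, 0)$ and integrate in $(z_1, z_2)$; by symmetry restrict to $z_2 > 0$. For each $z_2 \in (0, z_2^*)$, with half-height $z_2^* = \sigma\sqrt{pq}/(2\rho)$ where $p = 2\lambda-1+\rho$ and $q = 1+\rho-2\lambda$, the $z_1$-range is an interval of length $\Delta z_1(z_2) = \sqrt{\lambda^2-z_2^2} + \sqrt{(1-\lambda)^2-z_2^2} - \rho$ whose two endpoints carry the two integrable singularities. Linearizing near each endpoint gives $(1-\lambda)^2-|z-a|^2 \approx 2\delta\sqrt{(1-\lambda)^2-z_2^2}$ for $\delta = z_1 - z_1^{\mathrm{left}}$ and $\lambda-|z| \approx (\Delta z_1-\delta)\sqrt{\lambda^2-z_2^2}/\lambda$, so the $z_1$-integral reduces to $B(1/2, 1/2+\nu)(\Delta z_1(z_2))^{\nu}$ times slow prefactors of order $(1-\lambda)^{-1/2}$. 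Using the quadratic expansion $\Delta z_1(z_2) \approx (1-\rho)(1-(z_2/z_2^*)^2)$ with $z_2^{*2} \approx \sigma^2\lambda(1-\lambda)$ (valid where $z_2 \ll \min(\lambda, 1-\lambda)$), the $z_2$-integral via $z_2 = z_2^* u$ yields $J(\lambda) \lesssim \sigma^{1+2\nu}\sqrt{\lambda(1-\lambda)}\cdot(1-\lambda)^{-1/2} = \sigma^{1+2\nu}\sqrt{\lambda}$. Hence $I_{t,x}(\lambda) \lesssim \sigma^{1+2\nu}\lambda^{-1-\mu}$, and integrating over $[(1-\rho)/2, (1+\rho)/2]$ gives $\mu^{-1}\bigl(((1-\rho)/2)^{-\mu} - ((1+\rho)/2)^{-\mu}\bigr) \lesssim \mu^{-1}\sigma^{-2\mu}$. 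Thus the Range (II) contribution is $\lesssim \mu^{-1}\sigma^{1+2(\nu-\mu)}$, matching the claim.

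The main obstacle is the lens analysis in Range (II): the two integrable singularities on opposite arcs of $\partial D(\lambda)$ must be resolved simultaneously via the Beta integral identity, and the quadratic profile of $\Delta z_1(z_2)$ is what couples the two singularity exponents into the crucial factor $(\Delta z_1)^\nu \cdot z_2^* \sim \sigma^{1+2\nu}\sqrt{\lambda(1-\lambda)}$. The quadratic expansion of $\Delta z_1$ degrades near the endpoints $\lambda = (1\pm\rho)/2$, where the lens collapses and $z_2^*/\min(\lambda, 1-\lambda)$ ceases to be small; however the lens measure is also small there (of order $pq$), so a direct estimate on these transitional sub-regions produces only a negligible correction. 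Finally, the $\mu^{-1}$ in the final bound is generated precisely by the integration of the weight $\lambda^{-1-\mu}$ down to $\lambda \sim \sigma^2$, where it is only marginally integrable as $\mu \to 0^+$.
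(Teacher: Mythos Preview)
Your outline is correct and arrives at the same decisive estimate $I_{t,x}(\lambda)\lesssim(1-r/t)^{1/2+\nu}\lambda^{-1-\mu}$ in the lens region, with the $\mu^{-1}$ coming from integrating $\lambda^{-1-\mu}$ down to $\lambda\sim\sigma^2$; but the route differs from the paper's. The paper works in polar coordinates about the origin in \emph{all} cases (your three ranges, with the lens further split at $\lambda=1-r/t$ according to whether $O\in B_A(\lambda)$, giving four cases), and handles the angular integral by a short elementary lemma: $\int_0^{\theta_0}(\cos\theta-\cos\theta_0)^{-1/2}\,d\theta\le \bar C$ for $0<\theta_0\le\pi/2$ and $\le\bar C(\sin\theta_0)^{-1/2}$ for $\theta_0\ge\pi/3$. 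This reduces the lens integral in one stroke to a radial integral $\int\rho^{1/2}(\lambda-\rho)^{-1/2+\nu}\,d\rho$ over an interval of length $\sim 1-r/t$, with no approximation needed. Your Cartesian-slicing argument in Range~II recovers the same bound heuristically but rests on linearizations of the two weights and on the quadratic model $\Delta z_1(z_2)\approx(1-\rho)(1-(z_2/z_2^*)^2)$, which---as you acknowledge---fail near the lens tips and near $\lambda=(1\pm\rho)/2$; turning the ``negligible correction'' remark into a proof would require separate work that the paper's polar-plus-trig-lemma approach simply sidesteps. On the other hand, your Range~III device (polar about $a$, substitution $|z-a|=(1-\lambda)\sin\psi$) is more direct than the paper's Case~IV and even yields a sharper $(1-\rho)^{3/2+\nu}$ in place of $(1-\rho)^{1/2+\nu}$, though this is immaterial since Range~II dominates.
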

The proof of this technical result is included in the next subsection. 

\subsection{Proof of Lemma \ref{lem1-30-05-2020}}\label{proof-Poisson-f}
\subsubsection{Parametrization}
Because $t_0\geq 2$ and $|x|\geq 9t/10\geq 9t_0/10 \geq 9/5 >1$, with out lose of generality, we take $x = (r,0)\in \RR^2$, i.e., $x/t = (r/t,0)$. Then we consider two discs:
$$
B_O(\lambda) = \{|z|<\lambda-t^{-1}\},\quad B_A(\lambda) = \{|z-x/t|<1-\lambda\}
$$ 
and let $O = (0,0)$, $A = (r/t,0)$.
The integration is made on $B_O(\lambda)\cap B_A(\lambda)$. To calculate $I_{t,x}$, we need the following parametrization:
$$
\aligned
\rho =& |z|,
\\
\theta =& \text{the angle from (1,0) to $z$},\quad \theta\in (-\pi,\pi). 
\endaligned
$$
Then by elementary trigonometry,
\begin{equation}\label{eq4-05-30-2020}
|z-x/t|^2 = \rho^2 + (r/t)^2 - 2(r/t)\rho \cos\theta.
\end{equation}
Recall the expression of $I_{t,x}$: 
\begin{equation}
I_{t,x}(\lambda) = \lambda^{-3/2-\mu}\int_{|\rho|<\lambda-t^{-1}\atop |z-x/t|<1-\lambda}
\frac{(\lambda - \rho)^{-1/2+\nu}\rho d\rho d\theta}{(1-\lambda + |z-x/t|)^{1/2}(\big|1-\lambda - |z-x/t|\big|)^{1/2}}.
\end{equation}
Then,  in order to decide the range of the parameters $\rho,\theta$, we need to discuss the relative position of $B_O(\lambda)$ and $B_A(\lambda)$.  When $\lambda$ varies in $[t_0/t,1]$, there are four cases:
\\
Case I: $B_O(\lambda)\subset B_A(\lambda)\ \Leftrightarrow\ t/t_0\leq \lambda \leq \frac{t-r+1}{2t}$,
\\
Case II: $B_O(\lambda)\not\subset  B_A(\lambda), O\in B_A(\lambda) \ \Leftrightarrow\ \frac{t-r+1}{2t}<\lambda < \frac{t-r}{t}$,
\\
Case III: $B_O(\lambda)\not\subset  B_A(\lambda), O\not\in B_A(\lambda)\ \Leftrightarrow\ \frac{t-r}{t}\leq \lambda <\frac{t+r+1}{2t}$,
\\
Case IV: $B_A(\lambda)\subset B_O(\lambda)\ \Leftrightarrow\ \frac{t+r+1}{2t}\leq \lambda \leq 1$.

\subsubsection{Case I: $B_O(\lambda)\subset B_A(\lambda)$}
Recalling $t_0/t\leq \lambda \leq \frac{t-r+1}{2t}$, one has
\begin{equation}\label{eq2-30-05-2020}
t_0/t\leq \lambda\leq \frac{t-r+1}{2t} \leq \frac{1}{2} - \frac{r-1}{2t} < \frac{1}{2}.
\end{equation}
In this case the integral is made on $B_O$, thus $\theta\in (-\pi,\pi)$, $0\leq \rho\leq \lambda-t^{-1}$, and 
$$
\aligned
\big|1-\lambda - |z-x/t|\big|\geq& 1-\lambda  - (\lambda - t^{-1} + r/t) = \frac{t-r+1}{t} - 2\lambda,
\\
1-\lambda + |z-x/t| \geq& 1/2.
\endaligned
$$
Then
$$
\aligned
I_{t,x}(\lambda) \leq& \bar{C}\lambda^{-3/2-\mu}\Big(\frac{t-r+1}{2t}-\lambda\Big)^{-1/2}
\int_0^{\lambda-t^{-1}}\rho(\lambda - \rho)^{-1/2 + \nu}d\rho
\\
\leq& \bar{C}\lambda^{-1-\mu+\nu}(\lambda-t^{-1})\Big(\frac{t-r+1}{2t}-\lambda\Big)^{-1/2}
\\
\leq& \bar{C}\lambda^{\nu-\mu}\Big(\frac{t-r+1}{2t}-\lambda\Big)^{-1/2}.
\endaligned
$$
Then if we integrate $I_{t,x}$ on $\lambda \in [t_0/t, \frac{t-r+1}{2t}]$, we obtain:
$$
\aligned
\int_{t_0/t}^{\frac{t-r+1}{2t}}I_{t,x}(\lambda)d\lambda
\leq& \bar{C}C_F\int_{t_0/t}^{\frac{t-r+1}{2t}}\lambda^{\nu-\mu}\Big(\frac{t-r+1}{2t}-\lambda\Big)^{-1/2}d\lambda
\\
\leq& \bar{C}C_F\Big(\frac{t-r}{t}\Big)^{\nu-\mu}\int_{t_0/t}^{\frac{t-r+1}{2t}}\Big(\frac{t-r+1}{2t}-\lambda\Big)^{-1/2}d\lambda
\\
\leq& \bar{C}C_F\Big(\frac{t-r}{t}\Big)^{1/2 + \nu-\mu}.
\endaligned
$$
Here for the second inequality we have remarked that $\nu\geq \mu$ and $t_0/t\leq \lambda \leq 1$. Recall that in $\Kcal$, $ \frac{t-r}{t}\leq (s/t)^2$, we conclude that
\begin{equation}\label{eq6-31-05-2020}
\int_{t_0/t}^{\frac{t-r+1}{2t}}I_{t,x}(\lambda)d\lambda \leq \bar{C}C_F(s/t)^{1+2(\mu-\nu)}.
\end{equation}

\subsubsection{Case II: $B_O(\lambda)\not\subset B_A(\lambda), O\in B_A(\lambda)$}
In this case $\frac{t-r+1}{2t} < \lambda < 1-r/t$. Recall the expression of $I_{t,x}$, one has
$$
I_{t,x}(\lambda) = \lambda^{-3/2-\mu}\int_0^{\lambda-t^{-1}}\!\!\!\!\! \int_{-\theta_0(\rho,\lambda)}^{\theta_0(\rho,\lambda)}\frac{(\lambda-\rho)^{-1/2+\nu}\rho d\rho d\theta}{\big|(1-\lambda) - |z-x/t|\big|^{1/2}\big|(1-\lambda)+|z-x/t|\big|^{1/2}}
$$
where the bound $\theta_0 = \theta_0(\rho,\lambda)$ is determined by $\rho,\lambda$:
\\
- when $0\leq \rho < 1-\lambda-r/t$, $\forall \theta\in(-\pi,\pi)$, $|z - x/t| < 1-\lambda$, $\theta_0 = \pi$,
\\
- when $1-\lambda -r/t\leq \rho <\lambda - t^{-1}$,  $\theta_0$ is determined by the following trigonometrical equation: 
\begin{equation}\label{eq2-31-05-2020}
\rho^2 + (r/t)^2 - 2\rho(r/t)\cos \theta_0 = (1-\lambda)^2
\end{equation}
which means when we take $|\theta| = \theta_0$, $|z-x/t| = 1-\lambda$ with $|z| = \rho$. So we make the following decomposition of $I_{t,x}$:
$$
I_{t,x}(\lambda) = I_{t,x}^1(\lambda) + I_{t,x}^2(\lambda)
$$
with
$$
I_{t,x}^1(\lambda) := \lambda^{-3/2-\mu}\int_{|z|<\lambda-t^{-1}\atop |z-x/t|<1-\lambda}
\frac{(\lambda -\rho)^{-1/2+\nu}\mathbbm{1}_{\{0\leq \rho\leq 1-\lambda -r/t\}}\rho d\rho d\theta}{\big|(1-\lambda) - |z-x/t|\big|^{1/2}\big|(1-\lambda)+|z-x/t|\big|^{1/2}},
$$
$$
I_{t,x}^2(\lambda) := \lambda^{-3/2-\mu}\int_{|z|<\lambda-t^{-1}\atop |z-x/t|<1-\lambda}
\frac{(\lambda -\rho)^{-1/2+\nu} \mathbbm{1}_{\{1-\lambda -r/t\leq \rho\leq \lambda-t^{-1}\}}\rho d\rho d\theta}{\big|(1-\lambda) - |z-x/t|\big|^{1/2}\big|(1-\lambda)+|z-x/t|\big|^{1/2}}.
$$

$I_{t,x}^1$ is much easier than $I_{t,x}^2$, we firstly regard this one.
$$
\aligned
I_{t,x}^1 =& \lambda^{-3/2-\mu}\int_0^{1-\lambda-r/t}\int_{-\pi}^{\pi}\frac{\rho(\lambda-\rho)^{-1/2+\nu}d\theta\, d\rho}{(1-\lambda-|z-x/t|)^{1/2}(1-\lambda +|z-x/t|)^{1/2}}.
\endaligned
$$
Then remark that
$$
\aligned
&(1-\lambda-|z-x/t|)^{-1/2}\leq (1-\lambda -r/t-\rho)^{-1/2},
\\
&(1-\lambda+|z-x/t|)^{-1/2}\leq (1-\lambda)^{-1/2}\leq (r/t)^{-1/2}\leq \bar{C},
\\
&(\lambda-\rho)^{-1/2+\nu}\leq \big(\lambda - (1-r/t-\lambda)\big)^{-1/2+\nu}\leq \bar{C}\Big(\lambda  - \frac{t-r}{2t}\Big)^{-1/2+\nu}.
\endaligned
$$
Then
$$
\aligned
I_{t,x}^1 \leq& \bar{C}\lambda^{-3/2-\mu}(1-r/t-\lambda )\Big(\lambda - \frac{t-r}{2t}\Big)^{-1/2+\nu} \int_0^{1-r/t-\lambda}\!\!\!\!\!\!\!\!\!(1-\lambda-r/t-\rho)^{-1/2}d\rho.
\endaligned
$$
So we conclude by
\begin{equation}\label{eq3-30-05-2020}
I_{t,x}^1(\lambda)\leq  \bar{C}\lambda^{-3/2-\mu}\Big(\lambda - \frac{t-r}{2t}\Big)^{-1/2+\nu}(1-r/t-\lambda)^{3/2}.
\end{equation}

$I_{t,x}^2$ is more complicated. We firstly remark that from \eqref{eq4-05-30-2020}, 
$$
I_{t,x}^2(\lambda) = 2\lambda^{-3/2-\mu}\int_{1-r/t-\lambda}^{\lambda-t^{-1}}\int_0^{\theta_0}
\frac{\rho(\lambda-\rho)^{-1/2+\nu}d\rho\, d\theta}
{\sqrt{(1-\lambda)^2 - \big(\rho^2+(r/t)^2-2\rho(r/t)\cos\theta \big)}} .
$$
Then we make the following calculation:
$$
\aligned
&\int_0^{\theta_0}\Big((1-\lambda)^2 - \big(\rho^2+(r/t)^2-2\rho(r/t)\cos\theta\big)\Big)^{-1/2}d\theta
\\
=& 2(r/t)^{-1/2}\rho^{-1/2}\int_0^{\theta_0}\Big(\frac{(1-\lambda)^2 - (r/t)^2 - \rho^2}{2(r/t)\rho} + \cos\theta\Big)^{-1/2}\ d\theta
\\
=&2(r/t)^{-1/2}\rho^{-1/2}\int_0^{\theta_0}(\cos\theta-\cos\theta_0)^{-1/2}d\theta.
\endaligned
$$
Then apply the technical lemma \ref{lem1-31-05-2020} state and proved in the next subsection. To do so we need to determine whether $\theta_0\geq \pi/3$, that is,
$$
\aligned
\theta_0\geq \pi/3 &\Leftrightarrow \cos\theta_0 = -\frac{(1-\lambda)^2-(r/t)^2-\rho^2}{2(r/t)\rho} \leq 1/2 
\\
&\Leftrightarrow(1-\lambda)^2 - (r/t)^2 + (r/t)\rho - \rho^2\geq 0.
\endaligned
$$
This can be guaranteed by the following observations. First, $1-\lambda\geq 1 - \frac{t-r}{t} = r/t\geq 0$. Second, $\rho\leq \lambda-t^{-1}\leq 1-r/t \leq r/t\Rightarrow (r/t)\rho \geq \rho^2$. So we conclude that
\begin{equation}\label{eq1-01-06-2020}
\frac{t-r+1}{2t} \leq\lambda\leq 1-r/t,\ \rho\leq \lambda-t^{-1}\Rightarrow \theta_0\geq \pi/3.
\end{equation}
So by \eqref{eq4-01-06-2020}, 
$$
I_{t,x}^2(\lambda)\leq \bar{C}\lambda^{-3/2-\mu}\int_{1-r/t-\lambda}^{\lambda-t^{-1}}\rho^{1/2}(\lambda -\rho)^{-1/2+\nu}(\sin\theta_0)^{-1/2}d\rho.
$$
Remark that (because $\theta_0\geq \pi/3\Rightarrow \cos\theta_0\leq 1/2$)
$$
\aligned
(\sin\theta_0)^{-1/2} =& (1-\cos\theta_0)^{-1/4}(1+\cos\theta_0)^{-1/4} 
\\
\leq& \bar{C}\Big(\frac{(\rho+r/t)^2-(1-\lambda)^2}{2(r/t)\rho}\Big)^{-1/4}
\\
\leq& \bar{C}\rho^{1/4}(\rho-(1-\lambda-r/t))^{-1/4}(\rho+1-\lambda+r/t)^{-1/4}
\\
\leq& \bar{C}\rho^{1/4}(\rho - (1-\lambda-r/t))^{-1/4}.
\endaligned
$$
Here we have remarked that in the case of $I_{t,x}^2$, $\rho\geq 1-\lambda-r/t$, thus
$$
\rho + 1 -\lambda +r/t\geq 2(1+r/t-\lambda)\geq 4r/t\geq 1.
$$
So we conclude that
$$
\aligned
I_{t,x}^2(\lambda)\leq& \bar{C}\lambda^{-3/2-\mu}\int_{1-r/t-\lambda}^{\lambda-t^{-1}}
\rho^{3/4}(\lambda -\rho)^{-1/2+\nu}(\rho - (1-r/t-\lambda))^{-1/4}d\rho
\\
\leq& \bar{C}\lambda^{-3/4-\mu}(\lambda-t^{-1})^{3/4}\int_{1-r/t-\lambda}^{\lambda-t^{-1}}
(\lambda -\rho)^{-1/2+\nu}(\rho - (1-r/t-\lambda))^{-1/4}d\rho.
\endaligned
$$

Then apply Lemma \ref{lem1-01-06-2020} with $\alpha = 1/2-\nu, \beta = 1/4, a = 1-r/t-\lambda, b=\lambda-t^{-1}$, 

\begin{equation}\label{eq6-01-06-2020}
I_{t,x}^2(\lambda)\leq \bar{C}\lambda^{-\mu}\Big(\lambda - \frac{t-r+1}{2t}\Big)^{1/4+\nu}.
\end{equation}

Now we calculate, by \eqref{eq3-30-05-2020} and \eqref{eq6-01-06-2020},
\begin{equation}\label{eq4-31-05-2020}
\aligned
\int_{\frac{t-r+1}{2t}}^{1-r/t}I_{t,x}(\lambda)d\lambda
\leq& \bar{C}\int_{\frac{t-r+1}{2t}}^{1-r/t}\lambda^{-3/2-\mu}
\Big(\lambda - \frac{t-r}{2t}\Big)^{-1/2+\nu}(1-r/t-\lambda)^{3/2}d\lambda  
\\
& + \bar{C}\int_{\frac{t-r+1}{2t}}^{1-r/t} \lambda^{-\mu}\Big(\lambda - \frac{t-r+1}{2t}\Big)^{1/4 + \nu}d\lambda
\\
\leq& \bar{C}\big(\frac{t-r}{t}\big)^{-3/2-\mu}\Big(\frac{t-r}{t}\Big)^{3/2}
\int_{\frac{t-r+1}{2t}}^{1-r/t}\Big(\lambda - \frac{t-r}{2t}\Big)^{-1/2 + \nu}d\lambda
\\
& + \bar{C}\Big(\frac{t-r}{t}\Big)^{-\mu}\int_{\frac{t-r+1}{2t}}^{1-r/t}
\Big(\lambda - \frac{t-r+1}{2t}\Big)^{1/4+\nu}d\lambda 
\\
\leq& \bar{C}\Big(\frac{t-r}{t}\Big)^{1/2 -\mu+\nu}\leq \bar{C}(s/t)^{1+2(\nu-\mu)}. 
\endaligned
\end{equation}

\subsubsection{Case III: $B_O(\lambda)\not\subset B_A(\lambda), O\notin B_A(\lambda)$}
In this case, $1- r/t \leq\lambda\leq\frac{t+r+1}{2t}$.  Remark that 
$$
\lambda -  \frac{t-r}{t} \leq \rho \leq \lambda-t^{-1} 
$$
and for all $(\lambda, \rho)$ in this case, $\theta\in(-\theta_0,\theta_0)$ with $\theta_0$ defined in \eqref{eq2-31-05-2020}. Then
$$
I_{t,x}(\lambda) = \lambda^{-3/2-\mu} 
\int_{\lambda - \frac{t-r}{t}}^{\lambda-t^{-1}} 
\int_{-\theta_0}^{\theta_0} \frac{\rho(\lambda-\rho)^{-1/2+\nu} d\theta d\rho}{\sqrt{(1-\lambda)^2 - \rho^2 -(r/t)^2 + 2(r/t)\rho\cos\theta}}.
$$

We firstly show that when $\lambda \geq 1-r/t$, $\theta_0 <\pi/2$ (this will also be applied in the next case). This is because 
$$
\theta_0< \pi/2\ \Leftrightarrow \cos\theta_0 >0\ \Leftrightarrow\ \rho^2 + (r/t)^2 > (1-\lambda)^2.
$$
Remark that $\lambda - \frac{t-r}{t} \leq\rho<\lambda-t^{-1}$, we obtain:
$$
\rho^2 + (r/t)^2 \geq (\lambda - 1 + r/t)^2 + (r/t)^2 = (1-\lambda)^2 - 2(r/t)(1-\lambda) + 2(r/t)^2.
$$
Recall that $\lambda > 1-(r/t)\Rightarrow 1-\lambda < r/t$, 
$$
\rho^2 + (r/t)^2\geq (1-\lambda)^2 - 2(r/t)(1-(1-r/t)) +2(r/t)^2 >(1-\lambda)^2.
$$

Then similar to the bound of $I_{t,x}^2$ in the last case,  
$$
I_{t,x}(\lambda) \leq \bar{C}(r/t) \lambda^{-3/2-\mu} \int_{\lambda-\frac{t-r}{t}}^{\lambda-t^{-1}}\rho^{1/2}(\lambda-\rho)^{-1/2 + \nu} 
\int_0^{\theta_0}\big(\cos\theta - \cos\theta_0\big)^{-1/2}d\theta \ d\rho.
$$
So we apply \eqref{eq5-01-06-2020} and conclude that
\begin{equation}\label{eq3-31-05-2020}
I_{t,x}(\lambda)\leq \bar{C}\Big(\frac{t-r}{t}\Big)^{1/2+\nu}\lambda^{-1-\mu}.
\end{equation}
Integrate this bound on $\lambda\in[\frac{t-r}{t}, \frac{t+r+1}{2t}]$, one obtains:
\begin{equation}\label{eq7-31-05-2020}
\int_{\frac{t-r}{t}}^{\frac{t+r+1}{2t}}I_{t,x}(\lambda)d\lambda\leq \bar{C}\mu^{-1}\Big(\frac{t-r}{t}\Big)^{1/2+\nu}\Big(\frac{t-r}{t}\Big)^{-\mu}\leq \bar{C}\mu^{-1}(s/t)^{1+2(\nu-\mu)}.
\end{equation}

\subsubsection{Case IV: $B_O(\lambda) \supset B_A(\lambda)$} 
In this case $\frac{t+r+1}{2t}\leq \lambda \leq 1$. Then $\lambda - \frac{t-r}{t}\leq \rho\leq \frac{t+r}{t}-\lambda $, $\theta\in(-\theta_0,\theta_0)$. From the above case, we know that $\theta_0< \pi/2$.
$$
I_{t,x}(\lambda) = \lambda^{-3/2-\mu}\int_{\lambda - \frac{t-r}{t}}^{\frac{t+r}{t}-\lambda} 
\int_{-\theta_0}^{\theta_0}\frac{\rho(\lambda-\rho)^{-1/2+\nu} d\theta d\rho}{\sqrt{(1-\lambda)^2 - (r/t)^2 - \rho^2 + 2(r/t)\rho\cos\theta}}.
$$
Then as the above cases, by \eqref{eq5-01-06-2020} and remark that $9/10\leq r/t\leq 1$, 
$$
I_{t,x}(\lambda) \leq
\bar{C}(r/t)\lambda^{-3/2-\mu}\int_{\lambda-\frac{t-r}{t}}^{\frac{t+r}{t}-\lambda}\rho^{1/2}(\lambda-\rho)^{-1/2+\nu}d\rho
\leq \bar{C}\Big(\lambda  - \frac{t+r}{2t}\Big)^{-1/2+\nu}.
$$
Here we remark that $\lambda - \frac{t-r}{t}\geq \frac{t+r+1}{2t} - \frac{t-r}{t} = \frac{3r-t+1}{2t}\geq 17/20>1/2$.
So we conclude that 
\begin{equation}\label{eq5-31-05-2020}
\int_{\frac{t+r+1}{2t}}^1I_{t,x}(\lambda)d\lambda\leq \bar{C}\Big(\frac{t-r}{t}\Big)^{1/2+\nu}\leq \bar{C}(s/t)^{1+2(\nu-\mu)}.
\end{equation}

Now we recall \eqref{eq6-31-05-2020}, \eqref{eq4-31-05-2020}, \eqref{eq7-31-05-2020} combined with the above bound, the desired Lemma \ref{lem1-30-05-2020} is established.

\subsubsection{Technical lemmas}
\begin{lemma}\label{lem1-31-05-2020}
	Let $\theta_0\in(0,\pi)$. Then
	\begin{equation}\label{eq4-01-06-2020}
	\int_0^{\theta_0}(\cos\theta  - \cos\theta_0)^{-1/2} d\theta <
	\bar{C}(\sin\theta_0))^{-1/2},\quad \pi/3\leq \theta_0<\pi,
	\end{equation}
	\begin{equation}\label{eq5-01-06-2020}
	\int_0^{\theta_0}(\cos\theta  - \cos\theta_0)^{-1/2} d\theta < \bar{C},\quad 0<\theta_0\leq \pi/2.
	\end{equation}
	where $\bar{C}$ is a universal constant which does not depend on $\theta_0$.
\end{lemma}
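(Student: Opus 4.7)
The approach is to reduce both inequalities to a single explicit calculation. The key algebraic identity is the half-angle form
\begin{equation*}
\cos\theta-\cos\theta_0 \,=\, 2\bigl(\sin^2(\theta_0/2)-\sin^2(\theta/2)\bigr),
\end{equation*}
after which the substitution $u = \sin(\theta/2)/\sin(\theta_0/2)$ (and writing $k := \sin(\theta_0/2)$) gives $d\theta = 2\sin(\theta_0/2)(1-k^2u^2)^{-1/2}\,du$ and puts the integral into standard elliptic form:
\begin{equation*}
I(\theta_0) \,:=\, \int_0^{\theta_0}(\cos\theta-\cos\theta_0)^{-1/2}\,d\theta \,=\, \sqrt{2}\int_0^1\frac{du}{\sqrt{(1-u^2)(1-k^2u^2)}}.
\end{equation*}

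For \eqref{eq5-01-06-2020}, the hypothesis $\theta_0\leq\pi/2$ gives $k\leq\sin(\pi/4)=\sqrt{2}/2$, hence $1-k^2u^2\geq 1/2$ uniformly in $u\in[0,1]$, and I obtain immediately
\begin{equation*}
I(\theta_0) \,\leq\, 2\int_0^1(1-u^2)^{-1/2}\,du \,=\, \pi,
\end{equation*}
giving the universal constant claim.

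For \eqref{eq4-01-06-2020}, which corresponds to $k\in[1/2,1)$, the point is to handle the logarithmic divergence as $k\to 1$. Using $1+u\geq 1$ and $1+ku\geq 1$ trivially,
\begin{equation*}
I(\theta_0) \,\leq\, \sqrt{2}\int_0^1\frac{du}{\sqrt{(1-u)(1-ku)}}.
\end{equation*}
The right-hand integral can be evaluated in closed form by successive substitutions $v=1-u$ and then $s=kv/(1-k)$, reducing it to $\int_0^{k/(1-k)}\!\frac{ds}{\sqrt{s(1+s)}} = 2\,\mathrm{arcsinh}\bigl(\sqrt{k/(1-k)}\bigr)$. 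A short computation gives
\begin{equation*}
I(\theta_0) \,\leq\, \frac{2\sqrt{2}}{\sqrt{k}}\,\log\!\frac{1+\sqrt{k}}{\sqrt{1-k}}.
\end{equation*}

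It remains to compare this with $(\sin\theta_0)^{-1/2}=(2k)^{-1/2}(1-k^2)^{-1/4}$. For $k\in[1/2,1)$ one has $(\sin\theta_0)^{-1/2}\geq C(1-k)^{-1/4}$, while the upper bound above is $\leq C'\log(C''/\sqrt{1-k})$. Since $(1-k)^{1/4}\log(1/\sqrt{1-k})$ is bounded on $[1/2,1)$, there is an absolute $\bar{C}$ with $I(\theta_0)\leq\bar{C}(\sin\theta_0)^{-1/2}$ throughout. The only nontrivial point in the argument is this comparison at $k\to 1$; elsewhere both sides are manifestly comparable, so I expect no further obstacle.
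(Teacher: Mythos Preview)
Your proof is correct and complete. It takes a genuinely different route from the paper's argument, so a brief comparison is worthwhile.

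The paper works directly with the product formula
\[
\cos\theta-\cos\theta_0 = 2\sin\tfrac{\theta_0+\theta}{2}\,\sin\tfrac{\theta_0-\theta}{2},
\]
and bounds each sine factor from below by elementary case analysis: for $\theta_0\ge\pi/3$ one has $\sin\tfrac{\theta_0+\theta}{2}\gtrsim\sin\theta_0$ and $\sin\tfrac{\theta_0-\theta}{2}\gtrsim\theta_0-\theta$, reducing the integral to $\int_0^{\theta_0}(\theta_0-\theta)^{-1/2}\,d\theta$; for $\theta_0\le\pi/2$ one uses $\sin\tfrac{\theta_0+\theta}{2}\ge\sin\tfrac{\theta_0}{2}$ and the resulting factor $\theta_0^{1/2}(\sin(\theta_0/2))^{-1/2}$ is bounded. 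No integrals are evaluated beyond $\int(\theta_0-\theta)^{-1/2}\,d\theta$.

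Your approach instead recognizes the integral as $\sqrt{2}\,K(k)$ with $k=\sin(\theta_0/2)$, the complete elliptic integral of the first kind, via the classical substitution. You then bound $K(k)$: trivially for $k\le\sqrt{2}/2$, and for $k\in[1/2,1)$ by an explicit closed-form upper bound of logarithmic type, which you compare against the power $(1-k)^{-1/4}$ appearing in $(\sin\theta_0)^{-1/2}$. This yields sharper quantitative information (e.g.\ the explicit constant $\pi$ in \eqref{eq5-01-06-2020}) and connects the lemma to the standard asymptotics $K(k)\sim\tfrac12\log\tfrac{16}{1-k^2}$ as $k\to1$. The paper's argument, by contrast, is shorter and entirely self-contained, avoiding any evaluation of nontrivial integrals or special-function theory. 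Either approach is adequate for the purpose the lemma serves in Section~\ref{proof-Poisson-f}.
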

\begin{proof}
	Remark that
	$$
	\aligned
	&\int_0^{\theta_0}(\cos\theta  - \cos\theta_0)^{-1/2} d\theta 
	\\
	=& 2^{-1/2}\int_0^{\theta_0}\Big(\sin\frac{\theta_0+\theta}{2}\Big)^{-1/2}
	\Big(\sin\frac{\theta_0-\theta}{2}\Big)^{-1/2}d\theta.
	\endaligned
	$$
	\\
-	When $\pi/3 \leq \theta_0<\pi$, $\pi/6 \leq \frac{\theta_0+\theta}{2}< \theta_0$. Then when $\frac{\theta_0+\theta}{2}\geq \pi/2$, $\sin\frac{\theta_0+\theta}{2}\geq \sin\theta_0$. When $\frac{\theta_0+\theta}{2}\leq \pi/2$, $\sin\frac{\theta_0+\theta}{2}\geq \sin(\pi/6)\geq \frac{1}{2} \sin\theta_0$. Then
	$$
	\Big(\sin\frac{\theta_0+\theta}{2}\Big)^{-1/2}\leq \bar{C}(\sin\theta_0)^{-1/2}.
	$$
	On the other hand, 
	$$
	0 < (\theta_0-\theta)/2 \leq \pi/2\ \Rightarrow\ \sin\frac{\theta_0 - \theta}{2}\geq \bar{C}^{-1}(\theta_0-\theta)
	$$
	where $\bar{C}>0$ is a universal constant. Then
	$$
	\int_0^{\theta_0}(\cos\theta  - \cos\theta_0)^{-1/2} d\theta \leq \bar{C}(\sin\theta_0)^{-1/2}\int_0^{\theta_0}\Big(\sin\frac{\theta_0-\theta}{2}\Big)^{-1/2}d\theta\leq \bar{C}(\sin\theta_0)^{-1/2}.
	$$
	
	\noindent - When $0< \theta_0< \pi/2$, $\theta_0/2\leq (\theta_0 + \theta)/2\leq \pi/2$. Thus
	$$
	\sin\frac{\theta_0+\theta}{2}\geq \sin\frac{\theta_0}{2},
	$$
	then
	$$
	\aligned
	\int_0^{\theta_0}(\cos\theta  - \cos\theta_0)^{-1/2} d\theta 
	\leq& 
	\bar{C}\Big(\sin\frac{\theta_0}{2}\Big)^{-1/2}\int_0^{\theta_0}\Big(\sin\frac{\theta_0-\theta}{2}\Big)^{-1/2}d\theta
	\\
	\leq& \bar{C}\Big(\sin\frac{\theta_0}{2}\Big)^{-1/2}\int_0^{\theta_0}(\theta_0-\theta)^{-1/2}d\theta
	\\
	\leq& \bar{C}\Big(\sin\frac{\theta_0}{2}\Big)^{-1/2}\theta_0^{1/2}\leq \bar{C}.
	\endaligned
	$$
\end{proof}
\begin{lemma}\label{lem1-01-06-2020}
	Let $a<b$, $1 > \alpha,\beta \geq 0$. Then
	$$
	\int_a^b(x-a)^{-\alpha}(x-b)^{-\beta}dx\leq 2(1-\alpha)^{-\alpha}(1-\beta)^{-\beta}(2-\alpha-\beta)^{\alpha+\beta-1} (b-a)^{1-\alpha-\beta}
	$$
\end{lemma}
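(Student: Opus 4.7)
The plan is to reduce the integral to a standard Beta-type form on $[0,1]$ by the affine substitution $x = a + (b-a)t$, which gives
\begin{equation*}
\int_a^b (x-a)^{-\alpha}(b-x)^{-\beta}\,dx = (b-a)^{1-\alpha-\beta}\int_0^1 t^{-\alpha}(1-t)^{-\beta}\,dt,
\end{equation*}
so the task is to prove $\int_0^1 t^{-\alpha}(1-t)^{-\beta}\,dt \leq 2(1-\alpha)^{-\alpha}(1-\beta)^{-\beta}(2-\alpha-\beta)^{\alpha+\beta-1}$.

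The idea is to split $[0,1]$ at a well-chosen point $t_*$ so that the two resulting pieces can each be estimated by freezing the ``other'' singular factor. Concretely, I would set
\begin{equation*}
t_* := \frac{1-\alpha}{2-\alpha-\beta}, \qquad 1-t_* = \frac{1-\beta}{2-\alpha-\beta}.
\end{equation*}
On $[0,t_*]$ I use the monotone bound $(1-t)^{-\beta} \leq (1-t_*)^{-\beta}$ and integrate $t^{-\alpha}$ explicitly, which gives $\frac{t_*^{1-\alpha}(1-t_*)^{-\beta}}{1-\alpha}$. On $[t_*,1]$ I symmetrically use $t^{-\alpha}\leq t_*^{-\alpha}$ and integrate $(1-t)^{-\beta}$, giving $\frac{t_*^{-\alpha}(1-t_*)^{1-\beta}}{1-\beta}$.

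Substituting the specific values of $t_*$ and $1-t_*$, both contributions simplify to the \emph{same} quantity $(1-\alpha)^{-\alpha}(1-\beta)^{-\beta}(2-\alpha-\beta)^{\alpha+\beta-1}$, since in the first term the $(1-\alpha)^{1-\alpha}$ from $t_*^{1-\alpha}$ cancels against the denominator $(1-\alpha)$ leaving $(1-\alpha)^{-\alpha}$, the factor $(1-\beta)^{-\beta}$ comes directly from $(1-t_*)^{-\beta}$, and the powers of $(2-\alpha-\beta)$ combine to $-(1-\alpha)+(-\beta)=\alpha+\beta-1$. Adding the two equal contributions produces the factor $2$ and gives the claimed bound once the prefactor $(b-a)^{1-\alpha-\beta}$ is restored.

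There is no real obstacle here — the choice of $t_*$ is forced by requiring the two crude bounds to be equal (this is exactly the point where the geometric mean of the two monotone estimates is minimized, up to the trivial endpoint effects), so the only thing to verify is the algebraic simplification. The endpoint cases $\alpha=0$ or $\beta=0$ degenerate but still satisfy the inequality with the convention $0^0=1$, since one of the two pieces then disappears and the remaining one gives exactly the bound. The hypothesis $\alpha,\beta<1$ is used precisely to keep $t^{1-\alpha}$ and $(1-t)^{1-\beta}$ finite at the endpoints so that the integrals of $t^{-\alpha}$ and $(1-t)^{-\beta}$ are convergent.
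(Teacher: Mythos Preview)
Your proof is correct and essentially identical to the paper's: the paper splits the interval at $c = \frac{(1-\beta)a + (1-\alpha)b}{(1-\alpha)+(1-\beta)}$, freezes the nonsingular factor on each piece, and integrates, which after your affine change of variables is exactly your split at $t_* = \frac{1-\alpha}{2-\alpha-\beta}$. The only cosmetic difference is that you normalize to $[0,1]$ first while the paper works directly on $[a,b]$.
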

\begin{proof}
	$$
	\aligned
	\int_a^b(x-a)^{-\alpha}(x-b)^{-\beta}dx =& \int_a^c(x-a)^{-\alpha}(b-x)^{-\beta}dx + \int_c^b(x-a)^{-\alpha}(b-x)^{-\beta}
	\\
	\leq& (b-c)^{-\beta}\int_a^c(x-a)^{-\alpha}dx + (c-a)^{-\alpha}\int_c^b(b-x)^{-\beta}
	\\
	\leq& \frac{(b-c)^{-\beta}(c-a)^{1-\alpha}}{1-\alpha} + \frac{(c-a)^{-\alpha}(b-c)^{1-\beta}}{1-\beta}.
	\endaligned
	$$
	Now taking $c = \frac{(1-\beta)a + (1-\alpha)b}{(1-\alpha) + (1-\beta)}$, the desired result is proved.
\end{proof}

\section{Decay bounds based on integration along hyperbolas}\label{sec2-08-06-2020}

\subsection{$L^{\infty}$ estimate on wave equation: differential identities}\label{subsec1-02-06-2020}
Suppose that $u$ is a function defined in $\Kcal = \{(t,x)\in\RR^{n+1}\ |\ r<t+1\}$, sufficiently regular. We make the following decomposition:
\begin{equation}\label{decompo-wave-1}
\Box u = (s/t)^2\del_t\del_t u + \frac{2x^a}{t}\delu_a\del_t u - \sum_a\delu_a\delu_a u + (n-(r/t)^2)t^{-1}\del_tu.
\end{equation}
This can be written in the following form:
\begin{equation}\label{decompo-wave-2}
\aligned
\Box u =& (t-r)^{-\beta}t^{-\alpha}\big((s/t)^2\del_t 
+ (2x^a/t)\delu_a\big)((t-r)^{\beta}t^{\alpha}\del_t u) + p_{n,\alpha,\beta}(t,r)\del_tu 
\\
&- \sum_a \delu_a\delu_a u
\endaligned
\end{equation}
where
$$
p_{n,\alpha,\beta}(t,r) = \big((n-\alpha) - (\alpha+1)(r/t)^2 - \beta(t-r)t^{-1}\big)t^{-1}.
$$
In order to keep $p_{n,\alpha,\beta}$ positive, one needs:
$$
\alpha \leq \frac{n-1}{2},\quad \beta\leq n-\alpha.
$$
In this case $n=2$, $\alpha = 1/2$. Then
$$
p_{\beta}(t,r) := p_{2,1/2,\beta}(t,r) = \left(\frac{3(t+r)}{2t}-\beta\right)\frac{t-r}{t^2}.
$$
In the case of strong coupling, we take $\beta = 1/2$. Then
$$
p(t,r) := p_{1.1/2,1/2}(t,r) = \big(1 + (3r/2t)\big)\frac{t-r}{t^2}\geq \frac{t-r}{t^2}\simeq (s/t)^2t^{-1}.
$$

On the other hand, let us concentrate on the operator $(s/t)^2\del_t + (2x^a/t)\delu_a$. It can be written as
$$
(s/t)^2\del_t + (2x^a/t)\delu_a = \frac{t^2+r^2}{t^2}\del_t + \frac{2x^a}{t}\del_a = \frac{t^2+r^2}{t^2}\left(\del_t + \frac{2tx^a}{t^2+r^2}\del_a\right).
$$
Then \eqref{decompo-wave-2} is written as
\begin{equation}\label{decompo-wave-2.5}
\mathcal{J}\big((t-r)^{1/2}t^{1/2} \del_tu\big) + P\ \big((t-r)^{1/2}t^{1/2} \del_tu\big) = S^w[u] + \Delta^w[u]
\end{equation}
with
$$
\aligned
&\mathcal{J} := \del_t + \frac{2tx^a}{t^2+r^2}\del_a,
\\
&P(t,r) := \frac{t^2}{t^2+r^2}p(t,r) = \frac{t-r}{t^2+r^2}(1+(3r/2t)) \geq \frac{1}{4}(s/t)^2t^{-1},
\\
&S^w[u] := t^{1/2}(t-r)^{1/2}\frac{t^2\Box u}{t^2+r^2},\quad 
\Delta^w[u] := t^{1/2}(t-r)^{1/2}\frac{t^2\sum_a\delu_a\delu_a u}{t^2+r^2}.
\endaligned
$$
The above identity \eqref{decompo-wave-2.5} will be regarded as ODE satisfied by $(t-r)^{1/2}t^{1/2}\del_t u$ .

To make it more clear, we analyze the integral curve of the vector field $\mathcal{J} = \del_t + \frac{2tx^a}{t^2+r^2}\del_a $. It can be explicitly calculated. Let $(t_0,x_0)\in\Kcal$, then the integral curve $\gamma(t;t_0,x_0)$ with $\gamma(t_0;t_0,x_0) = (t_0,x_0)$ is written as
\begin{equation}\label{eq1-18-08-2020}
\aligned
&\gamma(t;t_0,x_0) = \big(\gamma^\alpha(t;t_0,x_0)\big)_{\alpha=0,1,2},
\\
& \gamma^0(t;t_0,x_0) = t,\quad
\gamma^a(t;t_0,x_0) = (x_0^a/r_0)\left(\sqrt{t^2+\frac{1}{4}C_0^2} - \frac{1}{2}C_0\right)
\endaligned
\end{equation}
where 
$$
C_0 = \frac{t_0^2-r_0^2}{r_0}.
$$
This is a (time like) hyperbola with center at $(0,-\frac{x_0^a}{2r_0}C_0)$ and hyperbolic radius $\frac{1}{2}C_0$.

\subsection{Decay bound on $\del_t u$}
Now we establish the following $L^{\infty}-L^{\infty}$ bound. 
\begin{proposition}\label{prpo2 wave-sharp}
	Let $u$ be a sufficiently regular function defined in $\Hcal_{[s_0,s_1]}$, vanishes near $\del \Kcal = \{r=t-1\}$. 
	Then the following bound holds:
	\begin{equation}\label{eq1-29-05-2020}
	\frac{\sqrt{2}}{2}|s\del_t u(t,x)|\leq s_0\|\del_t u\|_{L^{\infty}(\Hcal_{s_0})} 
	+  \bigg|\int_{s_0}^t W_{t,x}[u](\tau) e^{-\int_{\tau}^t P_{t,x}(\eta)d\eta} d\tau\bigg|
	\end{equation}
	where 
	$$
	W[u]_{t,x}(\tau) := S^w[u]\Big|_{\gamma(\tau;t,x)} + \Delta^w[u]\Big|_{\gamma(\tau;t,x)}
	$$
	and
	$$
	P_{t,x}(\tau) := P\Big|_{\gamma(\tau;t,x)}. 
	$$
\end{proposition}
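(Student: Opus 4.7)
The plan is to regard the differential identity \eqref{decompo-wave-2.5} as a first-order linear ODE satisfied by the rescaled quantity $(t-r)^{1/2}t^{1/2}\del_t u$ along the integral curves $\gamma(\cdot;t,x)$ of the vector field $\mathcal{J}$, and to solve it explicitly by the integrating factor method, reading off the pointwise bound at the endpoint $\tau=t$.

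First, I would set $w(\tau) := \bigl(\tau-|\gamma(\tau;t,x)|\bigr)^{1/2}\tau^{1/2}\,\del_t u\bigl(\gamma(\tau;t,x)\bigr)$. Since $\mathcal{J}f|_{\gamma(\tau;t,x)} = \tfrac{d}{d\tau}\bigl[f(\gamma(\tau;t,x))\bigr]$ by the definition of integral curves, restriction of \eqref{decompo-wave-2.5} to $\gamma(\cdot;t,x)$ yields the scalar ODE $w'(\tau) + P_{t,x}(\tau)\,w(\tau) = W_{t,x}[u](\tau)$. Multiplication by the integrating factor $e^{\int_{\tau_*}^{\tau}P_{t,x}(\eta)d\eta}$, where $\tau_*\geq s_0$ is the $t$-coordinate at which $\gamma$ meets $\Hcal_{s_0}$, and integration over $[\tau_*,t]$ produce the representation
\[
w(t) \;=\; w(\tau_*)\,e^{-\int_{\tau_*}^{t}P_{t,x}(\eta)d\eta} \;+\; \int_{\tau_*}^{t} W_{t,x}[u](\tau)\,e^{-\int_{\tau}^{t}P_{t,x}(\eta)d\eta}\,d\tau.
\]

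Next I would translate this back to a bound on $\del_t u(t,x)$. On the left, the identity $(t-r)(t+r)=s^2$ together with $t+r\leq 2t$ in $\Kcal$ gives $(t-r)\,t\geq s^2/2$, hence $|w(t)|\geq (s/\sqrt{2})\,|\del_t u(t,x)|$. On the right, since $\gamma(\tau_*;t,x)\in\Hcal_{s_0}$ satisfies $\tau_*^2-|\gamma(\tau_*)|^2=s_0^2$, one has
\[
\bigl(\tau_*-|\gamma(\tau_*)|\bigr)\,\tau_* \;=\; \frac{s_0^2\,\tau_*}{\tau_*+|\gamma(\tau_*)|}\;\leq\; s_0^2,
\]
so $|w(\tau_*)|\leq s_0\,\|\del_tu\|_{L^\infty(\Hcal_{s_0})}$. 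Combined with the nonnegativity $P(t,r)\geq \tfrac14(s/t)^2 t^{-1}\geq 0$ recorded in Subsection \ref{subsec1-02-06-2020}, which forces $e^{-\int_{\tau_*}^{t}P_{t,x}}\leq 1$, the two estimates above yield \eqref{eq1-29-05-2020}, with the lower limit ``$s_0$'' in the statement read as $\tau_*\geq s_0$ (or, equivalently, extending $W_{t,x}[u]$ by zero below $\tau_*$).

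The substantive obstacle is not so much the ODE manipulation, which is routine once the sign of $P$ is secured, but rather setting up the rescaling correctly so that the ``friction'' coefficient $p_\beta(t,r)\del_t u$ appearing in \eqref{decompo-wave-2} is actually absorbed into a nonnegative zeroth-order term; for this the choice $\alpha=\beta=1/2$ in \eqref{decompo-wave-2} is critical, and it is the very reason for the $(t-r)^{1/2}t^{1/2}$ weight in $w$ (and hence for the $s/\sqrt 2$ prefactor in the statement). A secondary check is that the explicit hyperbola \eqref{eq1-18-08-2020} truly does meet $\Hcal_{s_0}$ at some unique $\tau_*\in[s_0,t]$; this follows from the monotonicity $\tfrac{d}{d\tau}(\tau^2-|\gamma(\tau)|^2) = 2\tau\,(\tau^2-|\gamma|^2)/(\tau^2+|\gamma|^2)\geq 0$ along $\gamma$ together with the boundary value $\tau^2-|\gamma|^2=s^2$ at $\tau=t$.
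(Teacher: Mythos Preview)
Your proof is correct and follows essentially the same integrating-factor argument as the paper. The only point the paper treats slightly more carefully is that the backward curve $\gamma(\cdot;t,x)$ may exit $\Hcal_{[s_0,s_1]}$ through the conical boundary $\del\Kcal=\{r=t-1\}$ \emph{before} reaching $\Hcal_{s_0}$; in that case one takes $\tau_*$ on $\del\Kcal$ instead, where $\del_t u=0$ by hypothesis and hence $w(\tau_*)=0$, so the initial-data term vanishes outright and the bound \eqref{eq1-29-05-2020} still holds.
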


Before the proof, we need to make several observations on the integral curve $\gamma(\cdot; t,x)$:
\\
1. $\forall (t,x) \in \Hcal_s\subset  \Hcal_{[s_0,s_1]}$, $\gamma(\cdot;t,x)$ is time-like. 
\\
2.  $\gamma(\cdot;t,x)$ intersects $\Hcal_s$ at $\gamma(t;t,x) = (t,x)$. There exists a $t_0$ such that 
$$
\gamma(t_0;t,x)\in \Hcal_{s_0}\cup \Kcal, \quad \forall \tau \in [t_0,t], \gamma(\tau;t,x)\in \Hcal_{[s_0,s]}
$$
and $t_0\geq s_0$.

\begin{proof}[Proof of Proposiotion \ref{prpo2 wave-sharp}]
	
	This is by integrating \eqref{decompo-wave-2.5} along the hyperbola $\gamma(\tau;t,x)$. 
	
	For a fixed $(t,x)\in \Hcal_{[s_0,s_1]}$, let 
	$$
	U_{t,x} (\tau) := t^{1/2}(t-r)^{1/2}\del_t u\Big|_{\gamma(\tau;t,x)}.
	$$
	Then \eqref{decompo-wave-2.5} is written as 
	$$
	U_{t,x}'(\tau) + P_{t,x}(\tau) U_{t,x}(\tau) = W[u]_{t,x}(\tau).
	$$
	Integrate  this ODE on $\tau\in[t_0,t]$, one obtains:
	$$
	U_{t,x}(t) = U_{t,x}(t_0)e^{-\int_{t_0}^tP_{t,x}(\eta) d\eta} + \int_{t_0}^tW_{t,x}[u](\tau) e^{-\int_{\tau}^tP_{t,x}(\eta) d\eta}d\tau. 
	$$
	Then remark that $U_{t,x}(t) = t^{1/2}(t-r)^{1/2}\del_t u(t,x)$ and $s = \sqrt{(t+r)(t-r)}$  with $0\leq r\leq t$,
	$$
	\frac{\sqrt{2}}{2}s|\del_t u(t,x)|\leq |U_{t,x}(t_0)| + \bigg|\int_{t_0}^tW_{t,x}[u](\tau) e^{-\int_{t_0}^tP_{t,x}(\eta) d\eta}d\tau\bigg|
	$$
	where we applied the fact that $P(t,r)\geq 0$ in $\Hcal_{[s_0,s_1]}$. Now remark that 
	$$
	U_{t,x}(t_0) = t_0^{1/2}(t_0-r_0)^{1/2}\del_t u(t_0,x_0)
	$$
	with $r_0 = \sqrt{\sum_a|\gamma^a(t_0;t,x)|^2}$. Then there is two cases:
	\\
	- if $\gamma(t_0;t,x)\in\Hcal_{s_0}$, then 
	$$
	|U_{t,x}(t_0)|\leq s_0\sup_{\Hcal_{s_0}}\{|\del_t u|\},
	$$
	\\
	- if $\gamma(t_0;t,x)\in\Kcal = \{r = t-1\}$, then $\del_t u\big|_{\gamma(t_0;t,x)} = 0$. So $|U_{t,x}(t_0)| = 0$.
	
	Then the desired bound \eqref{eq1-29-05-2020} is established. 
\end{proof}
\section{Initialization of Bootstrap argument and the improvement of wave energy bounds}
\label{sec-bootstrap-1}
\subsection{Bootstrap bounds}

Let us suppose that for $s\in [2,s_1]$, the following {\bf bootstrap assumptions} hold with $C_1$ and $N\geq 14$:
\begin{equation}\label{eq1-26-04-2020}
\Ecal_2^N(s,u)^{1/2}\leq C_1\vep s^{1+\delta}, \quad \Ecal_{0,c}^N(s,v)^{1/2}\leq C_1\vep s^{\delta},
\end{equation}
\begin{equation}\label{eq2-26-04-2020}
\Ecal_2^{N-1}(s,u)^{1/2}\leq C_1\vep s,\quad \Ecal_{0,c}^{N-1}(s,v)^{1/2}\leq C_1\vep ,
\end{equation}
\begin{equation}\label{eq17-03-06-2020}
|\del_t\del^IL^J u|\leq C_1\vep s^{-1},\quad |I|+|J|\leq N-6.
\end{equation}
Here $C_1$ is a constant to be determined latter. When
\begin{equation}\label{eq3-14-09-2020}
\max\Big\{\Ecal_2^N(2,u)^{1/2}, \Ecal_2^{N-1}(2,u)^{1/2}, \sup_{\Hcal_2}\{|\del_t\del^IL^J u|\}\Big\}\leq C_0\vep < C_1\vep,
\end{equation}
by continuity, such interval $[2,s_1]$ exists.

Based on these bounds, we will prove that for $s\in[2,s_1]$ the following {\bfseries improved bounds} hold: 
\begin{equation}\label{eq1'-26-04-2020}
\Ecal_2^N(s,u)^{1/2}\leq \frac{1}{2}C_1\vep s^{1/2+\delta}, \quad \Ecal_{0,c}^N(s,v)^{1/2}\leq  \frac{1}{2}C_1\vep s^{\delta},
\end{equation}
\begin{equation}\label{eq2'-26-04-2020}
\Ecal_2^{N-1}(s,u)^{1/2}\leq  \frac{1}{2}C_1\vep s^{1/2},\quad \Ecal_{0,c}^{N-1}(s,v)^{1/2}\leq \frac{1}{2} C_1\vep ,
\end{equation}
\begin{equation}\label{eq3'-02-06-2020}
|\del_t\del^IL^J u|\leq  \frac{1}{2}C_1\vep s^{-1},\quad |I|+|J|\leq N-6.
\end{equation}
Then by classical bootstrap argument, we conclude that the local solution extends to time infinity and satisfies the above energy and decay bounds \eqref{eq1-26-04-2020}, \eqref{eq2-26-04-2020} and \eqref{eq17-03-06-2020}. The details of bootstrap argument can be found for example in \cite{Sogge-2008-book}.

\subsection{Basic $L^2$ bounds and Sobolev decay}

Based on \eqref{eq1-10-06-2020}, \eqref{eq5-10-06-2020}, \eqref{eq2-10-06-2020}, \eqref{eq6-10-06-2020} and the bootstrap assumptions \eqref{eq1-26-04-2020} and \eqref{eq2-26-04-2020}, the following bounds are direct:
\begin{equation}\label{eq4-15-06-2020}
\aligned
\|(s/t)^2 |\del u|_p\|_{L^2(\Hcal_s)}& + \||\dels u|_p\|_{L^2(\Hcal_s)} + s^{-1}\|(s/t)|u|_p\|_{L^2(\Hcal_s)}
\\
\leq& \left\{
\aligned
&CC_1\vep s^{\delta}, && p=N,
\\
&CC_1\vep ,&& p=N-1,
\endaligned
\right.
\endaligned
\end{equation}
\begin{equation}\label{eq5-15-06-2020}
\|(s/t)|\del v|_p\|_{L^2(\Hcal_s)} + \||\dels v|_p\|_{L^2(\Hcal_s)} + \||v|_p\|_{L^2(\Hcal_s)}\leq 
\left\{
\aligned
&CC_1\vep s^{\delta},\quad &&p=N,
\\
&CC_1\vep,&& p=N-1.
\endaligned
\right.
\end{equation}
Then by \eqref{eq3-10-06-2020}, \eqref{eq4-10-06-2020} combined with \eqref{eq1-26-04-2020} and \eqref{eq2-26-04-2020}
\begin{equation}\label{eq6-15-06-2020}
\aligned
\|(s/t)s|\del u|_{p-2}\|_{L^{\infty}(\Hcal_s)}& + \|(s/t)^{-1}s|\dels u|_{p-2}\|_{L^\infty(\Hcal_s)} + \||u|_{p-2}\|_{L^\infty(\Hcal_s)}
\\
\leq&
\left\{
\aligned
& CC_1\vep s^{\delta},\quad &&p = N,
\\
&CC_1\vep ,\quad && p=N-1,
\endaligned
\right.
\endaligned
\end{equation}
\begin{equation}\label{eq3-15-06-2020}
\aligned
\|s|\del v|_{p-2}\|_{L^{\infty}(\Hcal_s)}& + \|(s/t)^{-1}s|\dels v|_{p-2}\|_{L^\infty(\Hcal_s)} + \|(s/t)^{-1}s|v|_{p-2}\|_{L^\infty(\Hcal_s)}
\\
\leq&
\left\{
\aligned
& CC_1\vep s^{\delta},\quad &&p = N,
\\
&CC_1\vep ,\quad && p=N-1.
\endaligned
\right.
\endaligned
\end{equation}

\subsection{Bounds form \eqref{eq17-03-06-2020}}
From the assumption \eqref{eq17-03-06-2020} one can establish stronger decay on lower-order quantities, which are necessary in the following calculation. We firstly remark that for $|I|+|J|\leq N-6$,
$$
\del_a\del^IL^J u = \delu_a\del^IL^J u - (x^a/t)\del_t\del^IL^J u
$$
thus by \eqref{eq17-03-06-2020} and \eqref{eq6-15-06-2020}
$$
|\del_{\alpha}\del^IL^J u|\leq CC_1\vep t^{-1} + CC_1\vep s^{-1} \leq CC_1\vep s^{-1}.
$$
So by \eqref{eq11-10-06-2020} we obtain:
\begin{equation}\label{eq4-06-05-2020}
|\del u|_{N-6}\leq CC_1\vep s^{-1}.
\end{equation}

In fact by the following trick the decay bounds on $|\dels u|$ can be improved. Recall \eqref{eq1 lem5 notation}, for $|I|+|J|\leq N-7$, 
$$
|\del_r\del^IL^J\delu_a u|(t,x)\leq CC_1\vep (s/t)s^{-2} \leq CC_1\vep (t-r)^{-1/2}t^{-3/2}.
$$
Integrate this along redial direction,
$$
\del^IL^J\delu_au(t,x) = -\int_{|x|}^{t-1}\del_r(\del^IL^J\delu_a u)(t,\rho x/|x|)d\rho,
$$ 
then we obtain
\begin{equation}\label{eq5-06-05-2020}
|\dels u|_{N-7}\leq CC_1\vep (s/t)^2s^{-1}.
\end{equation}

\subsection{Fast decay of Klein-Gordon component near light-cone}
In order to recover the loss of conical decay, we need more precise decay on Klein-Gordon component. In this subsection we will establish the following bound:
\begin{equation}\label{eq1-31-07-2020}
|v|_p\leq
\left\{
\aligned
&CC_1\vep (s/t)^2s^{-1},\quad &&p=N-4,
\\
&CC_1\vep (s/t)^4s^{-1} + C(C_1\vep)^2(s/t)s^{-2},\quad &&p=N-6.
\endaligned
\right.
\end{equation}
This is based on Proposition \ref{prop1-fast-kg}, 
$$
c^2|v|_{N-4}\leq C(s/t)^2|\del v|_{N-3} + C|F_2|_{N-4}.
$$
Here remark that $|\del v|_{N-3}\leq CC_1\vep s^{-1}$ due to \eqref{eq3-15-06-2020}. For the bound on $F_2$, recall that $A_5$ is null and \eqref{eq13-10-06-2020} :
$$
\aligned
|A_5^{\alpha\beta}\del_{\alpha}u\del_{\beta}u|_{N-4} \leq& C(s/t)^2|\del u|_{N-4}|\del u|_{N-6} + |\dels u|_{N-4}|\del u|_{N-6} + |\dels u|_{N-7}|\del u|_{N-4}
\\
\leq& C(C_1\vep)^2(s/t)s^{-2} \leq C(C_1\vep)^2(s/t)^2s^{-1}.
\endaligned
$$
Here \eqref{eq4-06-05-2020} and \eqref{eq6-15-06-2020} are applied. This concludes the case of $p=N-4$.
Furthermore, apply the bound of order $N-4$,
\begin{equation}
|\del u|_{N-5}\leq C|v|_{N-4} \leq CC_1\vep(s/t)^2s^{-1} .
\end{equation}
Then \eqref{eq1-31-07-2020} is concluded.

\subsection{Improved energy bounds on wave component}
Equipped with \eqref{eq4-06-05-2020} and \eqref{eq5-06-05-2020}, the $L^2$ bound on $F_1$ becomes trivial. By \eqref{eq13-10-06-2020}
$$
\aligned
&\||A_1^{\alpha\beta}\del_{\alpha}u\del_{\beta}u|_p\|_{L^2(\Hcal_s)}
\\
\leq& C\|(s/t)^2|\del u|_{N-7}|\del u|_p\|_{L^2(\Hcal_s)} 
+ C\||\del u|_{N-7}|\dels u|_p\|_{L^2(\Hcal_s)}
\\ 
&+ C\|\dels u|_{N-7}|\del u|_p\|_{L^2(\Hcal_s)}
\\
\leq& CC_1\vep s^{-1}\|(s/t)^2|\del u|_p\|_{L^2(\Hcal_s)} 
+ CC_1\vep s^{-1}\||\dels u|_p\|_{L^2(\Hcal_s)} 
\\
&+ CC_1\vep s^{-1}\|(s/t)^2|\del u|_p\|_{L^2(\Hcal_s)}.
\endaligned
$$
Then by \eqref{eq4-15-06-2020}, 
\begin{equation}\label{eq4-18-08-2020}
\||A_1^{\alpha\beta}\del_{\alpha}u\del_{\beta}u|_p\|_{L^2(\Hcal_s)}\leq 
\left\{
\aligned
&C(C_1\vep)^2s^{-1+\delta},\quad && p=N,
\\
&C(C_1\vep)^2s^{-1},\quad && p=N-1.
\endaligned
\right.
\end{equation}

For $A_3$, the estimate is easier because $v$ enjoys better decay than $u$ (comparing \eqref{eq3-15-06-2020} with \eqref{eq4-06-05-2020} and \eqref{eq5-06-05-2020}). 

For $A_4$, remark that (thanks to \eqref{eq1-31-07-2020})
\begin{equation}\label{eq3-18-08-2020}
\aligned
\||vA_4^{\alpha}\del_{\alpha}u|_p\|_{L^2(\Hcal_s)}
\leq& C\||v|_{N-4}|\del u|_p\|_{L^2(\Hcal_s)} + C\||v|_p|\del u|_{N-6}\|_{L^2(\Hcal_s)}
\\
\leq& CC_1\vep s^{-1}\big(\|(s/t)^2|\del u|_p\|_{L^2(\Hcal_s)} + \||v|_p\|_{L^2(\Hcal_s)}\big)
\\
\leq& \left\{
\aligned
& C(C_1\vep)^2s^{-1 + \delta}, \quad&& p=N,
\\
& C(C_1\vep)s^{-1},\quad && p=N-1.
\endaligned
\right.
\endaligned
\end{equation}

The pure Klein-Gordon term is even more trivial. For example:
$$
\aligned
\||\del_{\alpha}v\del_{\beta}v|_p\|_{L^2(\Hcal_s)}\leq& \||\del v|_{N-4}|\del v|_p\|_{L^2(\Hcal_s)}
\leq 
CC_1\vep s^{-1}\|(s/t)|\del v|_p\|_{L^2(\Hcal_s)}
\\
\leq & 
\left\{
\aligned
& C(C_1\vep)^2 s^{-1+\delta},\quad &&p=N,
\\
& C(C_1\vep)^2 s^{-1},&& p=N-1.
\endaligned
\right.
\endaligned
$$

So we conclude that 
\begin{equation}\label{eq7-15-06-2020}
\||F_1|_p\|_{L^2(\Hcal_s)}\leq 
\left\{
\aligned
&C(C_1\vep)^2 s^{-1+\delta},\quad && p=N,
\\
&C(C_1\vep)^2 s^{-1},\quad && p=N-1.
\endaligned
\right.
\end{equation}

Now substitute this bound into the conformal energy estimate \eqref{eq8-15-06-2020}, one obtains, for $|I|+|J|\leq p$,
$$
\aligned
E_2(s,\del^IL^J u)^{1/2}\leq& E_2(2,\del^IL^J u)^{1/2} + C(C_1\vep)^2 \int_2^s \tau\||F_1|_p\|_{L^2(\Hcal_\tau)}
\endaligned
$$
which leads to
\begin{equation}\label{eq9-15-06-2020}
\Ecal_2^p(s, u)^{1/2}\leq C_0\vep +
\left\{
\aligned
& C(C_1\vep)^2s^{1+\delta},\quad && p=N,
\\
& C(C_1\vep)^2s ,\quad && p=N-1.
\endaligned
\right.
\end{equation}

\section{Improvement of Klein-Gordon energy bounds}
\label{sec-bootstrap-kg}
\subsection{Improved energy bounds for order $N$}
This is quite similar to the the bound of $A_1$. Remark that $F_2 = A_5^{\alpha\beta}\del_{\alpha}u\del_{\beta}u$ is also a null form. Thus 
\begin{equation}\label{eq10-15-06-2020}
\||A_5^{\alpha\beta}\del_{\alpha}u\del_{\beta}u|_N\|_{L^2(\Hcal_s)}\leq C(C_1\vep)^2 s^{-1+\delta}.
\end{equation} 
Substitute this bound into \eqref{ineq 3 prop 1 energy}, we obtain
\begin{equation}\label{eq11-15-06-2020}
\Ecal_{0,c}^N(s,u)^{1/2} \leq C_0\vep + C(C_1\vep)^2s^{\delta}. 
\end{equation}

\subsection{Nonlinear transform for order $N-1$}
Contrary to the high-order case, the bound of lower order on Klein-Gordon component is much more delicate. This is due to the logarithmic loss. To overpass this difficulty we rely on an algebraic trick applied in \cite{Ka}. 
$$
\Box \big(v - c^{-2}A_5^{\alpha\beta}\del_{\alpha}u\del_{\beta}u\big) 
+  c^2\big(v - c^{-2}A_5^{\alpha\beta}\del_{\alpha}u\del_{\beta}u\big) = -c^{-2}A_5^{\alpha\beta}\Box(\del_{\alpha}u\del_{\beta}u).
$$
Then by a direct calculation and the wave equation of \eqref{eq-main},
\begin{equation}\label{eq12-15-06-2020}
\Box w + c^2w = -2c^{-2}A_5^{\alpha\beta}m^{\mu\nu}\del_{\alpha}\del_{\mu}u\del_{\beta}\del_{\nu}u - 2c^{-2}A_5^{\alpha\beta}\del_{\alpha}u\del_{\beta}F_1
\end{equation}
where $w := v - c^{-2}A_5^{\alpha\beta}\del_{\alpha}u\del_{\beta}u$. The advantage of this transform is that, now in right-hand-side of \eqref{eq12-15-06-2020}, the second term is cubic and the first term, containing Hessian form of wave component, also enjoy integrable $L^2$ bounds. To see this we firstly establish the $L^2$ and $L^\infty$ bounds on Hessian form of wave component in the coming subsection.

\subsection{Bounds on Hessian form of wave component}
We will prove that
\begin{equation}\label{eq2-28-07-2020}
\|(s/t)^3s|\del\del u|_{p-1}\|_{L^2(\Hcal_s)} \leq
\left\{
\aligned
&CC_1\vep s^{\delta},\quad && p=N,
\\
&CC_1\vep ,\quad && p=N-1,
\endaligned
\right.
\end{equation}
\begin{equation}\label{eq1-28-07-2020}
(s/t)^2|\del\del u|_{p-1}\leq\left\{
\aligned
&CC_1\vep s^{-2+\delta},\quad &&p=N-3,
\\
&CC_1\vep (s/t)s^{-2},\quad && p=N-6.
\endaligned
\right.
\end{equation}
These bounds are based on \eqref{eq2 lem Hessian-flat-zero}. We will firstly establish the pointwise bound. To see this one only need to give sufficient decay bound on $|F_1|_{p,k}$. In fact we will prove that
\begin{equation}\label{eq1-27-07-2020}
|F_1|_p\leq C(C_1\vep)^2(s/t)s^{-2} + CC_1\vep t^{-1}|\del u|_p
\leq\left\{
\aligned
& C(C_1\vep)^2s^{-2},\quad &&p=N-3,
\\
& C(C_1\vep)^2(s/t)s^{-2},\quad &&p=N-6.
\endaligned
\right.
\end{equation}
To prove this we need to check each term in $F_1$. For the term $A_1$, $A_3$ we need their null structure. By \eqref{eq13-10-06-2020},
$$
\aligned
|A_1^{\alpha\beta}\del_{\alpha}u\del_{\beta}u|_p\leq& C(s/t)^2|\del u|_p|\del u|_{N-3} 
+ C|\dels u|_{N-3}|\del u|_p 
\\
\leq& CC_1\vep(s/t)s^{-1}|\del u|_p + CC_1\vep t^{-1}|\del u|_p\leq CC_1\vep t^{-1}|\del u|_p
\endaligned
$$
where \eqref{eq6-15-06-2020} are applied (case $p=N-1$). 
$$
\aligned
|A_3\del_{\alpha}u\del_{\beta}v|_p
\leq& C(s/t)^2|\del u|_p|\del v|_{N-3}
+ C|\dels u|_p|\del v|_{N-4} + C|\dels u|_{N-3}|\del v|_p
\\
&+ C|\del u|_p|\dels v|_{N-3}
\\
\leq& CC_1\vep (s/t)^2s^{-1} |\del u|_p
+ CC_1\vep t^{-1}|\dels u|_p
+ CC_1\vep t^{-1}|\del v|_p
\\
& + CC_1\vep t^{-1}|\del u|_p
\\
\leq& CC_1\vep t^{-1}|\del u|_p + C(C_1\vep)^2(s/t)^2s^{-2}.
\endaligned
$$
For $A_4$, remark that
$$
\aligned
|v\del_{\alpha}v|_p\leq&
|v|_{N-3}|\del u|_p + |v|_p|\del u|_{N-6}\leq CC_1\vep t^{-1}|\del u|_p + C(C_1\vep)^2 (s/t)s^{-2}
\endaligned
$$

The pure Klein-Gordon terms $B_2, B_3, K_2$ are much easier. We only show the bound of $B_2$:
$$
|B_2^{\alpha\beta}\del_{\alpha}v\del_{\beta}v|_{N-3}\leq C|\del v|_{N-3}|\del v|_{N-4}\leq C(C_1\vep)^2(s/t)^2s^{-2}.
$$
Then we conclude by \eqref{eq1-27-07-2020}.

Now substitute \eqref{eq1-27-07-2020} into \eqref{eq2 lem Hessian-flat-zero}, we arrive at
$$
(s/t)^2|\del\del u|_{p-1}\leq CC_1\vep (s/t)s^{-2} + Ct^{-1}|\del u|_p
$$
which leads to \eqref{eq1-28-07-2020}.

The proof of \eqref{eq2-28-07-2020} is quite similar. We need to establish the following bound:
\begin{equation}\label{eq3-28-07-2020}
\|s(s/t)|F_1|_{N-1}\|_{L^2(\Hcal_s)}\leq C(C_1\vep)^2.
\end{equation}
In fact this is guaranteed by \eqref{eq7-15-06-2020} (remark that $(s/t)\leq 1$ in $\Kcal$). Then by \eqref{eq2 lem Hessian-flat-zero},
$$
\|(s/t)^3s |\del\del u|_{p-1}\|_{L^2(\Hcal_s)}\leq C\|(s/t)s|F_1|_{p-1}\|_{L^2(\Hcal_s)} 
+ C\|(s/t)^2|\del u|_p\|_{L^2(\Hcal_s)}
$$
which leads to \eqref{eq2-28-07-2020}.

\subsection{Decay bound of $\del F_1$}
In order to bound the second term in right-hand-side of \eqref{eq12-15-06-2020} we need to establish the following bound:
\begin{equation}\label{eq3-15-08-2020}
|\del \tilde{F}_1|_{N-7}\leq C(C_1\vep)^2(s/t)^2s^{-2}
\end{equation}
where $\tilde{F}_1 = F_1 - A_4^{\alpha}v\del_{\alpha}u$, which is composed by the terms in $F_1$ except $A_4$, and
\begin{equation}\label{eq1-15-08-2020}
(s/t)^{-1}s^2|\del (A_4^{\alpha}v\del_{\alpha}u)|_{N-7} + (s/t)^{-2}s^3|\dels(A_4^{\alpha}v\del_{\alpha}u)|_{N-7}\leq C(C_1\vep)^2.
\end{equation}

This is also by checking each term. In fact by \eqref{eq13-10-06-2020},
$$
\aligned
\del_{\gamma}(A_1^{\alpha\beta}\del_{\alpha}u\del_{\beta}u) =& 2A_1^{\alpha\beta}\del_{\alpha}\del_{\gamma}u\del_{\beta}u 
= 2\Au_1^{\alpha\beta}\delu_{\alpha}\del_{\gamma}u\delu_{\beta}u 
\\
=& 2\Au_1^{00}\del_t\del_{\gamma}u\del_tu 
+ 2\sum_{(\alpha,\beta)\neq(0,0)}\Au_1^{\alpha\beta}\delu_{\alpha}\del_{\gamma}u\delu_{\beta}u
\endaligned
$$
Recall its null structure and by \eqref{eq4-06-05-2020},
$$
|A_1^{00}\del_t\del_{\gamma}u\del_tu|_{N-7}\leq C(C_1\vep)^2(s/t)^2|\del\del u|_{N-7}|\del u|_{N-7}\leq C(C_1\vep)^2(s/t)^2s^{-2}.
$$
The rest terms contain at least one good derivative. We remark that
$$
|\delu_a\del_{\gamma}u\del_{\beta}u|_{N-7}\leq C|\del\dels u|_{N-7}|\del u|_{N-7}
\leq Ct^{-1}|\del u|_{N-6}|\del u|_{N-7}\leq C(s/t)^2s^{-3}
$$
where \eqref{eq1 lem5 notation} is applied on $|\del\dels u|$ and \eqref{eq4-06-05-2020}, \eqref{eq5-06-05-2020} are applied on $\del u$ and $\dels u$ respectively.
$$
|\del_t\del_{\gamma}u\delu_b u|_{N-7}\leq C|\del\del u|_{N-7}|\dels u|_{N-7}\leq C(C_1\vep)^2(s/t)^2s^{-2}.
$$
where \eqref{eq4-06-05-2020} and \eqref{eq5-06-05-2020} are applied. So
\begin{equation}
|\del(A_1^{\alpha\beta}\del_{\alpha}u\del_{\beta}u)|_{N-7}\leq C(C_1\vep)^2(s/t)^2s^{-2}.
\end{equation}

The bound on $A_3$ is similar. We only need to remark that $|\del v|_p$ always enjoy better decay than $|\del u|_p$. Then 
$$
\aligned
&|A_3^{\alpha\beta}\del_{\alpha}u\del_{\beta}\del_{\gamma}v|_{N-7}
\\
\leq& C(s/t)^2|\del u|_{N-7}|\del\del v|_{N-7} + C|\dels u|_{N-7}|\del\del v|_{N-7} 
+ C|\del u|_{N-7}|\del\dels v|_{N-7}
\\
\leq& C(C_1\vep)^2(s/t)^2s^{-2} + C(C_1\vep)^2(s/t)^2s^{-3} + Ct^{-1}|\del u|_{N-7}|\del v|_{N-6}
\\
\leq& C(C_1\vep)^2(s/t)^2s^{-2}
\endaligned
$$
where \eqref{eq1 lem5 notation} , \eqref{eq4-06-05-2020} and \eqref{eq5-06-05-2020} are applied. 
$$
\aligned
&|A_3^{\alpha\beta}\del_{\alpha}\del_{\gamma}u\del_{\beta}v|_{N-7}
\\
\leq& C(s/t)^2|\del\del u|_{N-7}|\del v|_{N-7} + C|\del \dels u|_{N-7}|\del v|_{N-7} 
+ C|\del\del u|_{N-7}|\dels v|_{N-7}
\\
\leq& C(s/t)^2|\del\del u|_{N-6}|\del v|_{N-7} + Ct^{-1}|\del u|_{N-6}|\del v|_{N-7} 
+ Ct^{-1}|\del u|_{N-6}|v|_{N-6}
\\
\leq& C(C_1\vep)^2(s/t)^3s^{-2} + C(C_1\vep)^2(s/t)^2s^{-3} + C(s/t)^2s^{-3}
\\
\leq& C(C_1\vep)^2(s/t)^2s^{-2}.
\endaligned
$$
So we conclude that
\begin{equation}\label{eq2-15-08-2020}
|\del (A_3^{\alpha\beta}\del_{\alpha}u\del_{\beta}v)|_{N-7}\leq C(C_1\vep)^2(s/t)^2s^{-2}.
\end{equation}

The pure Klein-Gordon terms are easier. We only write the bound of $B_2$ in detail.
$$
|B_2^{\alpha\beta}\del_{\alpha}v\del_{\beta}\del_{\gamma}v|_{N-7}\leq C|\del v|_{N-7}|\del\del v|_{N-7}\leq C|v|_{N-5}^2\leq C(C_1\vep)^2(s/t)^2s^{-2}.
$$
Then we conclude by \eqref{eq3-15-08-2020}.

Finally we regard the bound of $A_4$. For this term we need to distinguish between good and bad derivatives. 
$$
|\del_{\alpha} (v\del_{\beta}u)|_{N-7} \leq  C|\del v|_{N-7}|\del u|_{N-7} + C|v|_{N-7}|\del\del u|_{N-7}\leq C(C_1\vep)^2(s/t)s^{-2}
$$
and
$$
|\delu_a (v\del_{\beta}u)|_{N-7}\leq C|\dels v|_{N-7}|\del u|_{N-7} + C|v|_{N-7}|\del\dels u|_{N-7}\leq C(C_1\vep)^2(s/t)^2s^{-3}.
$$
So we conclude by \eqref{eq1-15-08-2020}.

\subsection{Conclusion of this section}
We apply Proposition \ref{prop 1 energy} on \eqref{eq12-15-06-2020}. To do so we need to bound the $L^2$ norm of right-hand-side of \eqref{eq12-15-06-2020}. The first term is bounded as following:
\begin{equation}\label{eq1-19-08-2020}
\aligned
&A_5^{\alpha\beta}m^{\mu\nu}\,\del_{\alpha}\del_{\mu}u\del_{\beta}\del_{\nu}u
\\
=& \Au_5^{\alpha\beta}\minu^{\mu\nu}\delu_{\alpha}\delu_{\mu}u\, \delu_{\beta}\delu_{\nu}u
\\
&+ A_5^{\alpha\beta}m^{\mu\nu}\Psiu_{\nu}^{\nu'}\del_{\alpha}\big(\Psiu_{\mu}^{\mu'}\big)\delu_{\mu'}u\,\del_{\beta}\delu_{\nu'}u
+ A_5^{\alpha\beta}m^{\mu\nu}\Psiu_{\nu}^{\nu'}\del_{\alpha}\delu_{\mu'}u \del_{\beta}\big(\Psi_{\nu}^{\nu'}\big)\delu_{\nu'}u
\\
&+ A_5^{\alpha\beta}m^{\mu\nu}\del_{\alpha}\big(\Psiu_{\mu}^{\mu'}\big)\del_{\beta}\big(\Psiu_{\nu}^{\nu'}\big)\delu_{\mu'}u\delu_{\nu'}u.
\endaligned
\end{equation}
The last term contain decreasing factor $\del_{\alpha}\big(\Psiu_{\mu}^{\mu'}\big)\simeq t^{-1}$, thus can be bounded by $C(C_1\vep)^2s^{-2+\delta}$:
$$
\aligned
&\||A_5^{\alpha\beta}m^{\mu\nu}\del_{\alpha}\big(\Psiu_{\mu}^{\mu'}\big)\del_{\beta}\big(\Psiu_{\nu}^{\nu'}\big)\delu_{\mu'}u\delu_{\nu'}u|_{N-1}\|_{L^2(\Hcal_s)}
\\
\leq& C\|t^{-2}|\del_{\alpha} u\del_{\beta} u|_{N-1}\|_{L^2(\Hcal_s)}
\\
\leq& Cs^{-2}\|(s/t)^2|\del u|_{N-1}|\del u|_{N-6}\|_{L^2(\Hcal_s)}
\\
\leq& C(C_1\vep)^2s^{-3}.
\endaligned
$$

The second and the third term in right-hand-side of \eqref{eq1-19-08-2020} are critical. We firstly make the following calculation:
\begin{equation}\label{eq2-19-08-2020}
\aligned
&A_5^{\alpha\beta}m^{\mu\nu}\Psiu_{\nu}^{\nu'}\del_{\alpha}\big(\Psiu_{\mu}^{\mu'}\big)\delu_{\mu'}u\,\del_{\beta}\delu_{\nu'}u
\\
=&
\Au_5^{\alpha\beta}m^{\mu\nu}\Psiu_{\nu}^{\nu'}\delu_{\alpha}\big(\Psiu_{\mu}^{\mu'}\big)\delu_{\mu'}u\delu_{\beta}\delu_{\nu'}u
\\
=&\Au_5^{00}m^{\mu\nu}\Psiu_{\nu}^{\nu'}\del_t\big(\Psiu_{\mu}^{\mu'}\big)\delu_{\mu'}u\del_t\delu_{\nu'}u
+ \sum_{b}\Au_5^{\alpha b}m^{\mu\nu}\Psiu_{\nu}^{\nu'}\delu_{\alpha}\big(\Psiu_{\mu}^{\mu'}\big)\delu_{\mu'}u\,\delu_b\delu_{\nu'}u
\\
& + \sum_{a}\Au_5^{a0}m^{\mu\nu}\Psiu_{\nu}^{0}\delu_a\big(\Psiu_{\mu}^{c}\big)\delu_cu\del_t\del_tu
+ \sum_{a}\Au_5^{a0}m^{\mu\nu}\Psiu_{\nu}^{c}\delu_a\big(\Psiu_{\mu}^{\mu'}\big)\delu_{\mu'}u\del_t\delu_c u
\\
&+   \sum_{a}\Au_5^{a0}m^{\mu\nu}\Psiu_{\nu}^0\delu_a\big(\Psiu_{\mu}^{0}\big)\del_t u\del_t\del_t u.
\endaligned
\end{equation}
Thanks to the null condition on $A_5$, the first term is bounded as following:
$$
\aligned
\|\Au_5^{00}m^{\mu\nu}\Psiu_{\nu}^{\nu'}\del_t\big(\Psiu_{\mu}^{\mu'}\big)\delu_{\mu'}u\del_t\delu_{\nu'}u\|_{L^2(\Hcal_s)}
\leq& C\|(s/t)^2t^{-1}|\del u \del\del u|_{N-1}\|_{L^2(\Hcal_s)}
\\
\leq& C s^{-1}\|(s/t)^3|\del u|_N|\del u|_{N-6}\|_{L^2(\Hcal_s)}
\\
\leq& C(C_1\vep)^2s^{-2+\delta}.
\endaligned
$$
The second term in right-hand-side of \eqref{eq2-19-08-2020} contains a good derivative, thus can also be bounded directly:
$$
\aligned
&\||\Au_5^{\alpha b}m^{\mu\nu}\Psiu_{\nu}^{\nu'}\delu_{\alpha}\big(\Psiu_{\mu}^{\mu'}\big)\delu_{\mu'}u\,\delu_b\delu_{\nu'}u|_{N-1}\|_{L^2(\Hcal_s)}
\\
\leq& C\|t^{-1}|\del u \dels\del u|_{N-1}\|_{L^2(\Hcal_s)}
\\
\leq& Cs^{-1}\|(s/t)|\del u|_{N-1}|\del\dels u|_{N-7}\|_{L^2(\Hcal_s)} + Cs^{-1}\|(s/t)|\del u|_{N-6}|\del\dels u|_{N-1}\|_{L^2(\Hcal_s)}
\\
\leq& CC_1\vep s^{-1}\|(s/t)^2s^{-2}|\del u|_{N-1}\|_{L^2(\Hcal_s)} + CC_1\vep s^{-2} \|(s/t)t^{-1}|\del u|_N\|_{L^2(\Hcal_s)}
\\
\leq& C(C_1\vep)^2s^{-2+\delta}.
\endaligned
$$
The third and forth term are bounded in the same manner, we omit the detail. The last term is the most critical one. We need to dig more of its structure.  Remark that $\Psiu_0^0 = 1$ and $\Psiu_0^{c} = -x^c/t$, and $m^{ab} = - \delta^{ab}$. Then
$$
\aligned
&m^{\mu\nu}\Psiu_{\nu}^0\delu_a\big(\Psiu_{\mu}^{0}\big)\del_t u\del_t\del_t u 
\\
=& -\sum_{d=1}^2\Psiu_d^0\delu_a(\Psiu_d^0)\del_tu\del_t\del_tu
= \sum_{d=1}^2 (x^d/t)\big(x^ax^d/t^3 - \delta_a^d/t\big)\del_tu\del_t\del_tu
\\
=& (x^ar^2/t^4 - x^a/t^2)\del_tu\del_t\del_t u = (x^a/t^2)(r^2/t^2-1)\del_tu\del_t\del_tu
\\
=& - (x^a/t^2)(s/t)^2\del_tu\del_t\del_tu.
\endaligned
$$
This additional conical decay $(s/t)^2$ is crucial. Then 
$$
\aligned
&\||\Au_5^{a0}m^{\mu\nu}\Psiu_{\nu}^0\delu_a\big(\Psiu_{\mu}^{0}\big)\del_t u\del_t\del_t u|_{N-1}\|_{L^2(\Hcal_s)}
\\
\leq& C\|(s/t)^2t^{-1}|\del u \del\del u|_{N-1}\|_{L^2(\Hcal_s)}
\\
\leq& Cs^{-1}\|(s/t)^3|\del u|_{N-6}|\del\del u|_{N-1}\|_{L^2(\Hcal_s)} + Cs^{-1}\|(s/t)^3|\del u|_{N-1}|\del\del u|_{N-7}\|_{L^2(\Hcal_s)}
\\
\leq& CC_1\vep s^{-2}\|(s/t)^2|\del u|_N\|_{L^2(\Hcal_s)} \leq C(C_1\vep)^2s^{-2+\delta}.
\endaligned
$$ 
So we conclude that
\begin{equation}\label{eq3-19-08-2020}
\||A_5^{\alpha\beta}m^{\mu\nu}\Psiu_{\nu}^{\nu'}\del_{\alpha}\big(\Psiu_{\mu}^{\mu'}\big)\delu_{\mu'}u\,\del_{\beta}\delu_{\nu'}u|_{N-1}\|_{L^2(\Hcal_s)}\leq C(C_1\vep)^2s^{-2+\delta}.
\end{equation}

Now we regard the first term in right-hand-side of \eqref{eq1-19-08-2020}.
\begin{equation}\label{eq1-29-07-2020}
\aligned
&\Au_5^{\alpha\beta}\minu^{\mu\nu}\delu_{\alpha}\delu_{\mu}u\, \delu_{\beta}\delu_{\nu}u
\\
=& \Au_5^{00}\minu^{00}\del_t\del_tu\,\del_t\del_tu 
+ 2\Au_5^{00}\minu^{a0}\del_t\delu_au\,\del_t\del_t u + 2\Au_5^{a0}\minu^{00}\delu_a\del_tu\,\del_t\del_tu
\\
&+ \Au_5^{a0}\minu^{b0}\delu_a\delu_b u\, \del_t\del_t u + \Au_5^{a0}\minu^{0b}\delu_a\del_tu\,\del_t\delu_bu
\\
&+ \Au_5^{0a}\minu^{b0}\del_t\delu_bu\,\delu_a\del_t u + \Au_5^{0a}\minu^{0b}\del_t\del_t u\,\delu_a\delu_bu
\\
&+ \Au_5^{ab}\minu^{00}\delu_a\del_tu\,\delu_b\del_tu + \Au_5^{00}\minu^{ab}\del_t\delu_au\,\del_t\delu_bu
\\
&+ 2\Au_5^{ab}\minu^{c0}\delu_a\delu_cu\,\delu_b\del_tu + 2\Au_5^{a0}\minu^{bc}\delu_a\delu_bu\,\del_t\delu_c u
\\
&+ \Au_5^{ab}\minu^{cd}\delu_a\delu_cu\,\delu_b\delu_d u.
\endaligned
\end{equation}
Remark that both $A_5$ and $m$ are null quadratic forms. Thus $|\Au_5^{00}|_{p,k} + |\minu^{00}|_{p,k}\leq C(s/t)^2$. Then recall the bounds \eqref{eq2-28-07-2020} and \eqref{eq1-28-07-2020}:
$$
\aligned
\||\Au_5^{00}\minu^{00}\del_t\del_tu\,\del_t\del_tu|_{N-1}\|_{L^2(\Hcal_s)}
\leq& C\|(s/t)^4|\del\del u|_{N-7}|\del\del u|_{N-1}\|_{L^2(\Hcal_s)}
\\
\leq& CC_1\vep\|(s/t)^3s^{-2}|\del\del u|_{N-1}\|_{L^2(\Hcal_s)}
\\
\leq& CC_1\vep s^{-3}\|(s/t)^3s|\del\del u|_{N-1}\|_{L^2(\Hcal_s)}
\\
\leq& C(C_1\vep)^2s^{-3+\delta}.
\endaligned
$$
The rest terms are bounded in a similar way. We need to apply the following bounds due to \eqref{eq1 lem5 notation}, \eqref{eq4-15-06-2020} and \eqref{eq6-15-06-2020}:
\begin{equation}\label{eq2-29-07-2020}
\|(s/t)s|\del\dels u|_{N-1}\|_{L^2(\Hcal_s)} + \|s^2|\del\dels u|_{N-3}\|_{L^{\infty}(\Hcal_s)}\leq CC_1\vep s^{\delta}
\end{equation}
and, based on \eqref{eq4-06-05-2020},
\begin{equation}\label{eq3-29-07-2020}
|\del\dels u|_{N-7}\leq CC_1\vep (s/t)s^{-2}.
\end{equation}
Similarly, by \eqref{eq4 notation} combined with \eqref{eq4-15-06-2020} and \eqref{eq5-15-06-2020},
\begin{equation}\label{eq4-29-07-2020}
\|t|\dels\dels u|_{N-1}\|_{L^2(\Hcal_s)} + \|t^2|\dels\dels u|_{N-3}\|_{L^{\infty}(\Hcal_s)}\leq CC_1\vep s^{\delta}.
\end{equation}
Also, by \eqref{eq5-06-05-2020}
\begin{equation}\label{eq5-29-07-2020}
|\dels\dels u|_{N-8}\leq CC_1\vep (s/t)^3s^{-2}.
\end{equation}

Substitute these bounds into the corresponding expressions, we can prove that
$$
\aligned
&\||\del\dels u\, \del\dels u|_{N-1}\|_{L^2(\Hcal_s)}\leq C\||\del\dels u|_{N-3}|\del\dels u|_{N-1}\|_{L^2(\Hcal_s)}
\\
\leq& CC_1\vep s^{-2}\|(s/t)^{-1}s^{-1}(s/t)s|\del\dels u|_{N-1}\|_{L^2(\Hcal_s)} 
\leq C(C_1\vep)^2s^{-2 + \delta} 
\endaligned
$$
where we have applied the fact that $(s/t)\leq Cs^{-1}$ in $\Kcal$. In the same manner,
$$
\aligned
&\|(s/t)^2|\del\del u\,\del\dels u|_{N-1}\|_{L^2(\Hcal_s)}
\\
\leq& C\|(s/t)^2|\del\del u|_{N-7}|\del\dels u|_{N-1}\|_{L^2(\Hcal_s)} 
+ C\|(s/t)^2|\del\del u|_{N-1}|\del\dels u|_{N-7}\|_{L^2(\Hcal_s)}
\\
\leq& CC_1\vep s^{-3}\|(s/t)s|\del\dels u|_{N-1}\|_{L^2(\Hcal_s)}
+ CC_1\vep s^{-3}\|(s/t)^3s|\del\del u|_{N-1}\|_{L^2(\Hcal_s)}
\\
\leq& CC_1\vep s^{-3+\delta}.
\endaligned
$$
$$
\aligned
&\||\dels\dels u\, \del\del u|_{N-1}\|_{L^2(\Hcal_s)}
\\
\leq& C\||\dels\dels u|_{N-1}|\del\del u|_{N-3}\|_{L^2(\Hcal_s)} + C\||\dels\dels u|_{N-8}|\del\del u|_{N-1}\|_{L^2(\Hcal_s)}
\\
\leq& CC_1\vep\|s^{-1+\delta}t^{-1}\, t|\dels\dels u|_{N-1}\|_{L^2(\Hcal_s)} 
+ CC_1\vep \|(s/t)^3s^{-2}|\del\del u|_{N-1}\|_{L^2(\Hcal_s)}
\\
\leq&C(C_1\vep)^2s^{-2+2\delta}.
\endaligned
$$
With these bounds, we conclude that
\begin{equation}\label{eq6-29-07-2020}
\||\Au_5^{\alpha\beta}\minu^{\mu\nu}\delu_{\alpha}\delu_{\mu}u\, \delu_{\beta}\delu_{\nu}u|_{N-1}\|_{L^2(\Hcal_s)}
\leq C(C_1\vep)^2s^{-2+2\delta}
\end{equation}
which gives integrable bound for the first term in right-hand-side of \eqref{eq12-15-06-2020}.

Now we regard the second term in right-hand-side of \eqref{eq12-15-06-2020}. Recall the following decomposition.
\begin{equation}\label{eq4-15-08-2020}
A_5^{\alpha\beta}\del_{\alpha}u\del_{\beta}F_1 = A_5^{\alpha\beta}\del_{\alpha}u\del_{\beta}\big(A_4^{\gamma}v\del_{\gamma}u\big) + A_5^{\alpha\beta}\del_{\alpha}u\del_{\beta}\tilde{F}_1. 
\end{equation}
The second term in right-hand-side is easier:
$$
\aligned
&\||A_5^{\alpha\beta}\del_{\alpha}u\del_{\beta}\tilde{F}_1|_{N-1}\|_{L^2(\Hcal_s)}
\\
\leq& C\||\del u|_{N-6}|\tilde{F}_1|_{N-1}\|_{L^2(\Hcal_s)} + C\||\del u|_{N-1}|\tilde{F}_1|_{N-7}\|_{L^2(\Hcal_s)}
\\
\leq& CC_1\vep s^{-1}\||\tilde{F}_1|_{N-1}\|_{L^2(\Hcal_s)} + CC_1\vep s^{-2}\|(s/t)^2|\del u|_{N-1}\|_{L^2(\Hcal_s)}
\\
\leq& C(C_1\vep)^2 s^{-1}\||F_1|_{N-1}\|_{L^2(\Hcal_s)} + CC_1\vep s^{-2}\|(s/t)^2|\del u|_{N-1}\|_{L^2(\Hcal_s)}
\\
\leq& C(C_1\vep)^2 s^{-2+\delta}
\endaligned
$$
where we we have applied \eqref{eq7-15-06-2020} and \eqref{eq3-15-08-2020}.

For the first term in right-hand-side of \eqref{eq4-15-08-2020} we need to evoke the null structure of $A_5$. More precisely,
$$
\aligned
&\||A_5^{\alpha\beta}\del_{\alpha} u \del_{\beta}(A_4^{\gamma}v\del_{\gamma}u)|_{N-1}\|_{L^2(\Hcal_s)}
\\
\leq& C\|(s/t)^2|\del u|_{N-6}|\del (v\del_{\gamma} u)|_{N-1}\|_{L^2(\Hcal_s)} 
+ C\|(s/t)^2|\del u|_{N-1}|\del(v\del_{\gamma}u)|_{N-7}\|_{L^2(\Hcal_s)}
\\
& + C\||\dels u|_{N-7}|\del (v\del_{\gamma}u)|_{N-1}\|_{L^2(\Hcal_s)}
+C\||\dels u|_{N-1}|\del(v\del_{\gamma}u)|_{N-7}\|_{L^2(\Hcal_s)}
\\
& + C\||\del u|_{N-6}|\dels (v\del_{\gamma}u)|_{N-1}\|_{L^2(\Hcal_s)}
+C\||\del u|_{N-1}|\dels(v\del_{\gamma}u)|_{N-7}\|_{L^2(\Hcal_s)}
\\
\leq& CC_1\vep s^{-1}\| (s/t)^2|F_1|_{N-1} \|_{L^2(\Hcal_s)} 
+ CC_1\vep s^{-2}\| (s/t)^2|\del u|_{N-1}\|_{L^2(\Hcal_s)}\|_{L^2(\Hcal_s)}
\\
& + CC_1\vep s^{-1}\|(s/t)^2|F_1|_{N-1}\|_{L^2(\Hcal_s)}
+ CC_1\vep s^{-2}\|(s/t)|\dels u|_{N-1}\|_{L62(\Hcal_s)}
\\
&+ CC_1\vep s^{-1}\|t^{-1}|v\del_{\gamma} u|_N\|_{L^2(\Hcal_s)}
+ CC_1\vep s^{-3}\|(s/t)^2|\del u|_{N-1}\|_{L^2(\Hcal_s)}
\\
\leq& C(C_1\vep)^2s^{-2+\delta}.
\endaligned
$$
Here for the second inequality we have applied \eqref{eq4-06-05-2020}, \eqref{eq1-15-08-2020}, \eqref{eq5-06-05-2020}, \eqref{eq1-15-08-2020}, \eqref{eq4-06-05-2020} and \eqref{eq1-15-08-2020} respectively on each term. And then for the last inequality \eqref{eq7-15-06-2020}, \eqref{eq4-15-06-2020} are applied. Remark that this bound is integrable.

Then apply Proposition \ref{prop 1 energy} and obtain:
\begin{equation}\label{eq5-15-08-2020}
\Ecal_{0,c}^{N-1}(s,w)^{1/2}\leq C_0\vep + C(C_1\vep)^2.
\end{equation}
To recover the bound on $v$, we only need to recall \eqref{eq10-15-06-2020}. Then the following bound is established:
\begin{equation}\label{eq8-29-07-2020}
\Ecal_{0,c}^{N-1}(s,v)^{1/2}\leq C_0\vep + C(C_1\vep)^2.
\end{equation}

\section{Proof of \eqref{eq3'-02-06-2020}}
\label{sec-bootstrap-decay}
\subsection{Algebraic preparation}
\eqref{eq3'-02-06-2020} is the most critical one throughout this article. It relies on Proposition \ref{prpo2 wave-sharp} and Proposition \ref{prop1-01-06-2020}. However, we cannot apply them directly on $u$, because the term $A_1^{\alpha\beta}\del_{\alpha}u\del_{\beta}u$ will never have sufficient decay. Remark that $u$ is a scalar function, so we recall the following well-known transformation which will eliminate this term. Let $\phi = u - \frac{A_1^{00}}{2}u^2$. Then
\begin{equation}\label{eq12-30-07-2020}
\Box \phi = A_3^{\alpha\beta}\del_{\alpha}u\del_{\beta}v + A_4^{\alpha}v\del_{\alpha}u + B_2^{\alpha\beta}\del_{\alpha}v\del_{\beta}v + B_3^{\alpha}v\del_{\alpha}v + K_2v^2 - A_1^{00}uF_1.
\end{equation}
This is the only place that we demand $u$ is a scalar. To see this let us recall the following result:
\begin{lemma}\label{prop1-30-07-2020}
	Let $A^{\alpha\beta}$ be a symmetric null quadratic form, i.e,
	$$
	A^{\alpha\beta}\xi_{\alpha}\xi_{\beta} = 0,\quad \forall \xi\text{ satisfying } \xi_0^2 - \xi_1^2 - \xi^2_2 = 0 
	$$ 
	and
	$$
	A^{\alpha\beta} = A^{\beta\alpha}. 
	$$
	Then 
	$$
	A^{\alpha\beta} = A^{00}m^{\alpha\beta}
	$$
	where $m^{\alpha\beta}$ is the Minkowski metric. 
\end{lemma}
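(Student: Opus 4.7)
\medskip

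\noindent\textbf{Proof proposal.} The plan is to extract the components of $A^{\alpha\beta}$ by plugging specifically chosen null covectors into the identity $A^{\alpha\beta}\xi_\alpha\xi_\beta = 0$ and exploiting the symmetry assumption. Since $\xi_0 = 0$ together with $\xi_0^2 - \xi_1^2 - \xi_2^2 = 0$ forces $\xi = 0$, every nontrivial null covector is (up to scale) of the form $\xi(\theta) = (1,\cos\theta,\sin\theta)$ for some $\theta \in [0,2\pi)$.

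First I would test the two pairs $\xi(0), \xi(\pi)$ and $\xi(\pi/2), \xi(3\pi/2)$. Writing out $A^{\alpha\beta}\xi_\alpha\xi_\beta = 0$ for these four vectors, adding and subtracting the resulting linear relations, one immediately obtains
\begin{equation*}
A^{11} = -A^{00}, \qquad A^{22} = -A^{00}, \qquad A^{01} = A^{10} = 0, \qquad A^{02} = A^{20} = 0,
\end{equation*}
where the symmetry $A^{\alpha\beta} = A^{\beta\alpha}$ is used to combine the $\xi_0\xi_a$ terms. This already pins down every entry of $A^{\alpha\beta}$ except the off-diagonal spatial components $A^{12} = A^{21}$.

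To nail down the remaining component, I would substitute the full family $\xi(\theta)$. Using the values just found, the identity $A^{\alpha\beta}\xi(\theta)_\alpha\xi(\theta)_\beta = 0$ collapses to
\begin{equation*}
A^{00}\bigl(1 - \cos^2\theta - \sin^2\theta\bigr) + 2A^{12}\sin\theta\cos\theta = A^{12}\sin(2\theta) = 0.
\end{equation*}
Since this must hold for every $\theta$, we conclude $A^{12}=0$. Assembling all the identified entries, $A^{\alpha\beta}$ is diagonal with values $(A^{00}, -A^{00}, -A^{00})$, which is precisely $A^{00} m^{\alpha\beta}$ in our signature convention. There is no genuine obstacle here; the only mild point of care is to keep track of the symmetry when combining the equations coming from $\xi(\theta)$ and $\xi(\theta + \pi)$, so that the cross terms $A^{0a}$ are correctly extracted rather than lost in the cancellation.
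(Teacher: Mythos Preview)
Your proof is correct and follows essentially the same approach as the paper: both test the null vectors $(1,\pm 1,0)$ and $(1,0,\pm 1)$ to extract $A^{0a}=0$ and $A^{aa}=-A^{00}$, then use one more null direction to kill $A^{12}$. The only cosmetic difference is that the paper plugs in the single vector $(\sqrt{2},1,1)$ for the last step, whereas you use the whole family $\xi(\theta)$ and the non-vanishing of $\sin(2\theta)$; these are equivalent.
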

The proof of this Lemma is postponed into Appendix. Then we make the following calculation:
$$
\Box (u^2) = m^{\alpha\beta}\del_{\alpha}\del_{\beta}(u^2) 
= 2m^{\alpha\beta}\del_{\alpha}u\del_{\beta}u + 2u\Box u
= 2m^{\alpha\beta}\del_{\alpha}u\del_{\beta}u + 2uF_1.
$$
Equipped this identity, we calculate $\Box \phi$ and substitute \eqref{eq-main} into the expression. Then \eqref{eq12-30-07-2020} is established. Then we differentiate \eqref{eq12-30-07-2020} with respect to $\del^IL^J$ with $|I|+|J|\leq N-6$, 
\begin{equation}\label{eq1-21-08-2020}
\aligned
&\Box \del^IL^J \phi 
\\
=& \del^IL^J( A_3^{\alpha\beta}\del_{\alpha}u\del_{\beta}v + A_4^{\alpha}v\del_{\alpha}u + B_2^{\alpha\beta}\del_{\alpha}v\del_{\beta}v + B_3^{\alpha}v\del_{\alpha}v + K_2v^2 - A_1^{00}uF_1).
\endaligned
\end{equation}

We will apply Proposition \ref{prpo2 wave-sharp} on \eqref{eq1-21-08-2020} and then obtain the following bound:
\begin{equation}\label{eq13-30-07-2020}
|\del_t \del^IL^J \phi|_{N-6}\leq C_0\vep s^{-1}+ C(C_1\vep)^2s^{-1}.
\end{equation}
To do so we need to establish the following bounds for $|I|+|J|\leq N-6$ (following the notation of Proposition  \ref{prpo2 wave-sharp}):
\begin{equation}\label{eq1-30-07-2020}
|\Delta^w[\del^IL^J \phi]|\leq  C_2\vep(s/t)^3s^{-1} + (C_1\vep)^2 (s/t)^3s^{-1}
\end{equation}
with $C_2$ a constant determined by the system and $N$, and
\begin{equation}\label{eq2-30-07-2020}
|S^w[\del^IL^J \phi]|\leq C(C_1\vep)^2(s/t)^2t^{-1} + C(C_1\vep)^2(s/t)s^{-2}.
\end{equation}
The following subsections are devoted to these bounds.

\subsection{Proof of \eqref{eq1-30-07-2020}}
In the region $\Hcal_{[2,s_1]}^{\text{int}} := \{r\leq 9t/10\}\cap\Hcal_{[2,s_1]}$, this bound is directly by \eqref{eq4-10-06-2020} and \eqref{eq4 notation} combined with \eqref{eq9-15-06-2020}. More precisely,
\begin{equation}\label{eq5-14-09-2020}
|u|_{N-3}\leq Cs^{-1}\Fcal_2^{N-1}(2;s,u)\leq CC_0\vep s^{-1}\ln(s) + C(C_1\vep)^2\leq CC_1\vep.
\end{equation}
Then
$$
t^2|\delu_a\delu_a u|_{N-5}\leq CC_0\vep s^{-1}\ln (s) + C(C_1\vep)^2.
$$

Remark that when $r\leq 9t/10$, $(s/t)\geq \sqrt{19}/10>0$. Thus
\begin{equation}\label{eq4-08-09-2020}
|\delu_a\delu_a u|_{N-5}\leq C\big(C_0\vep + (C_1\vep)^2\big) (s/t)^3s^{-2}.
\end{equation}
Also recall that
$$
|\delu_a\delu_a(u^2)|_{N-5}\leq C(C_1\vep)^2 t^{-2}\leq C(C_1\vep)^2(s/t)^3s^{-2}.
$$
Then in the region $\Hcal_{[2,s_1]}^{\text{int}}$, \eqref{eq1-30-07-2020} is verified with $C_2\geq CC_0$.

When near the light-cone, we need to apply Proposition \ref{prop1-01-06-2020}. To do so we need to establish the following bound:
\begin{equation}\label{eq3-30-07-2020}
|F_1|_{N-4}\leq C(C_1\vep)^2 (s/t)s^{-2}\simeq C(C_1\vep)^2t^{-3/2}(t-r)^{-1/2}.
\end{equation}
This is again by checking each term in $F_1$. By \eqref{eq13-10-06-2020}, and especially \eqref{eq5-06-05-2020}
\begin{equation}\label{eq7-20-08-2020}
\aligned
&|A_1^{\alpha\beta}\del_{\alpha}u\del_{\beta}u|_{N-4}
\\
\leq&
C(s/t)^2|\del u|_{N-4}|\del u|_{N-6} + C|\del u|_{N-4}|\dels u|_{N-7} + C|\del u|_{N-6}|\dels u|_{N-4}
\\
\leq& 
C(C_1\vep)^2(s/t)s^{-2}
\leq C(C_1\vep)^2 t^{-3/2}(t-r)^{-1/2}.
\endaligned
\end{equation}
The rest terms enjoy better decay. We underline their precise decays and then write them in the form of \eqref{eq3-30-07-2020}. These precise decays will be applied latter.
\begin{equation}\label{eq4-20-08-2020}
\aligned
&|A_3^{\alpha\beta}\del_{\alpha}u\del_{\beta}v|_{N-4}
\\
\leq& C(s/t)^2|\del u|_{N-4}|\del v|_{N-4} 
+ C|\del u|_{N-4}|\dels v|_{N-4} + C|\dels u|_{N-4}|\del v|_{N-4}
\\
\leq& C(C_1\vep)^2(s/t)^2s^{-2} + C(C_1\vep)^2(s/t)^{-1}s^{-1}t^{-2} + C(C_1\vep)^2t^{-2}
\\
\leq& \underline{C(C_1\vep)^2(s/t)^2s^{-2}} 
\leq C(C_1\vep)^2t^{-3/2}(t-r)^{-1/2}.
\endaligned
\end{equation}
\begin{equation}\label{eq5-20-08-2020}
\aligned
|A_4^{\alpha}v\del_{\alpha}u|_{N-4}\leq& C|v|_{N-4}|\del u|_{N-6} + C|v|_{N-6}|\del u|_{N-4}
\\
\leq& C(C_1\vep)^2(s/t)^2s^{-2} + C(C_1\vep)^2(s/t)^3s^{-2} + C(C_1\vep)^2s^{-3}
\\
\leq& \underline{C(C_1\vep)^2\big((s/t)^2s^{-2} + s^{-3}\big)}
\leq  C(C_1\vep)^2t^{-3/2}(t-r)^{-1/2},
\endaligned
\end{equation}
where \eqref{eq1-31-07-2020} is applied on $|v|$.

The pure Klein-Gordon terms are bounded directly. We only write the following bound:
\begin{equation}\label{eq6-20-08-2020}
|\del v \del v|_{N-4}\leq C|v|_{N-3}^2\leq \underline{C(C_1\vep)^2(s/t)^2s^{-2}}
\leq C(C_1\vep)^2t^{-3/2}(t-r)^{-1/2}.
\end{equation}

Then we regard the right-hand-side of \eqref{eq12-30-07-2020}. For the convenience of expression, we denote by
$G_1 = F_1 - A_1^{\alpha\beta}\del_{\alpha}u\del_{\beta}u$ which are the terms in $F_1$ other than $A_1$. Then by \eqref{eq7-20-08-2020} - \eqref{eq6-20-08-2020},
\begin{equation}\label{eq0-08-09-2020}
|G_1|_{N-4}\leq C(C_1\vep)^2(s/t)^2s^{-2}.
\end{equation}
The last term is bounded as following:
$$
|uF_1|_{N-4} \leq |\phi F_1|_{N-4} + \frac{|A_1^{00}|}{2}|u|_{N-4}|uF_1|_{N-4}.
$$
Recalling \eqref{eq6-15-06-2020}, 
$$
|uF_1|_{N-4}\leq |\phi F_1|_{N-4} + CC_1\vep |uF_1|_{N-4}
$$
so when $\vep$ sufficiently small such that 
\begin{equation}\label{eq1-08-09-2020}
|A_1^{00}u|_{N-4}\leq 1,
\end{equation}
then
$$
\aligned
|uF_1|_{N-4}\leq& C|\phi F_1|_{N-4}
\\
\leq& C(C_1\vep)^2(s/t)^2s^{-2}
\sup_{\Hcal^{\text{ext}}_{[2,s_1]}}
\big\{(s/t)^{-1}|\phi|_{N-4}\big\}
\\
& + C(C_1\vep)^2(s/t)s^{-2}\sup_{\Hcal^{\text{int}}_{[2,s_1]}}\{(s/t)^{-1}|\phi|_{N-4}\} \mathbbm{1}_{\Hcal^{\text{int}}_{[2,s_1]}}.
\endaligned
$$
Apply \eqref{eq5-14-09-2020} on the last term and remark that in $\Hcal^\text{int}_{[2,s_1]}$, $ 0< \sqrt{19}/10 \leq  (s/t) < 1$, 
\begin{equation}\label{eq2-08-09-2020}
|uF_1|_{N-4}\leq C(C_1\vep)^2(s/t)^2s^{-2} + C(C_1\vep)^2(s/t)^2s^{-2}
\sup_{\Hcal^{\text{ext}}_{[2,s_1]}}
\big\{(s/t)^{-1}|\phi|_{N-4}\big\}.
\end{equation}

Now  we apply Proposition \ref{prop1-01-06-2020} on  
$$
\Box \del^IL^J \phi = -A_1^{00}\del^IL^J (u F_1) + \del^IL^J G_1,\quad |I|+|J|\leq N-4.
$$
We do the following decomposition. For $|I| + |J|\leq N-4$,
$$
\del^IL^J \phi = \phi^{IJ}_I + \phi^{IJ}_F
$$
with
\begin{equation}\label{eq6-30-07-2020}
\aligned
&\Box \phi^{IJ}_I = 0,
\\
&\phi^{IJ}(2,x) = \del^IL^J \phi(2,x),\quad \del_t u(2,x) = \del_t\del^IL^J\phi(2,x)
\endaligned
\end{equation}
and 
\begin{equation}\label{eq5-30-07-2020}
\Box \phi^{IJ}_F = \del^IL^J\big( -A_1^{00} uF_1 + G_1\big),\quad \phi^{IJ}_F(2,x) = \del_t \phi^{IJ}_F(2,x) = 0. 
\end{equation}
Then $\del^IL^J \phi = \phi^{IJ}_I + \phi^{IJ}_F$.
Remark that $\del^IL^J \phi(2,x), \del_t\del^IL^J\phi(2,x)$ are compactly supported in $\{|x|<1\}$. Furthermore, recall the relation of $\phi$ and $u$, there is a constant $K_N$ determined by $N$ and the system such that for $|I|+|J|\leq N-4$,
\begin{equation}\label{eq1-14-09-2020}
\aligned
\|\del^IL^J \phi(2,\cdot)\|_{L^{\infty}(\RR^2)} + \|\del_x\del^IL^J \phi(2,\cdot)\|_{L^\infty(\RR^2)} &+
\| \del_t\del^IL^J\phi(2,\cdot)\|_{L^{\infty}(\RR^2)}
\\
\leq& K_N \big(\|u_0\|_{H^{N+1}} + \|u_1\|_{H^N}\big)
\\
\leq& K_N\vep.
\endaligned
\end{equation}
Then by Lemma \ref{lem1-13-06-2020}
\begin{equation}
|\phi^{IJ}_I|_{N-4}\leq \bar{C}K_N\vep s^{-1}
\end{equation}
with $\bar{C}$ a universal constant. 

On the other hand, by Lemma \ref{lem3 Poisson} applied on \eqref{eq5-30-07-2020} with $\mu = \nu=1/2$ and $(t,x)\in \Hcal^{\text{ext}}_{[2,s_1]}$, recall \eqref{eq2-08-09-2020} and \eqref{eq0-08-09-2020}
\begin{equation}\label{eq9-30-07-2020}
|\phi^{IJ}_F|_{N-4}(t,x)\leq C(C_1\vep)^2(s/t)\sup_{\Hcal^{\text{ext}}_{[2,s_1]}}\big\{(s/t)^{-1}|\phi|_{N-4}\big\} + C(C_1\vep)^2(s/t).
\end{equation}
Apply \eqref{eq9-30-07-2020} at each point of $\Hcal^{\text{ext}}_{[2,s_1]}$, for $|I|+|J|\leq N-4$,
$$
\aligned
&\sup_{\Hcal^{\text{ext}}_{[2,s_1]}}\big\{(s/t)^{-1}|\phi^{IJ}_F|\big\}
\\
\leq &C(C_1\vep)^2(s/t)\sup_{\Hcal^{\text{ext}}_{[2,s_1]}\atop |I'|+|J'|\leq N-4}\big\{(s/t)^{-1}|\phi^{I'J'}_F|\big\} 
+ C(C_1\vep)^2(s/t)\sup_{\Hcal^{\text{ext}}_{[2,s_1]}\atop |I'|+|J'|\leq N-4}\{(s/t)^{-1}|\phi^{I'J'}_I|\} 
\\
&+ C(C_1\vep)^2(s/t).
\endaligned
$$
This leads to 
$$
\aligned
\sup_{\Hcal^{\text{ext}}_{[2,s_1]}\atop|I|+|J|
\leq N-4}\big\{(s/t)^{-1}|\phi^{IJ}_F|\big\}
\leq& C(C_1\vep)^2\sup_{\Hcal^{\text{ext}}_{[2,s_1]}\atop |I|+|J|\leq N-4}\big\{(s/t)^{-1}|\phi^{IJ}_F|\big\} 
\\
&+ C(C_1\vep)^2K_N\vep s^{-1} + C(C_1\vep)^2(s/t).
\endaligned
$$
Then when $\vep$ sufficiently small such that
\begin{equation}\label{eq3-08-09-2020}
C(C_1\vep)^2\leq \frac{1}{2}, \quad K_N\vep \leq 1,
\end{equation}
we conclude that
\begin{equation}
\sup_{\Hcal^{\text{ext}}_{[2,s_1]}\atop |I|+|J|\leq N-4}\big\{(s/t)^{-1}|\phi^{IJ}_F|\big\}\leq C(C_1\vep)^2.
\end{equation}

So we conclude:
\begin{equation}\label{eq2-20-08-2020}
|\phi|_{N-4}\leq \max_{|I|+|J|\leq N-4}\{|\phi^{IJ}_I| + |\phi^{IJ}_F|\} \leq \bar{C}K_N\vep s^{-1} + C(C_1\vep)^2(s/t), \quad \text{in}\quad \Hcal^{\text{ext}}_{[2,s_1]}.
\end{equation} 
Now remark the estimate 
$$
|\dels\dels \del^IL^J \phi|\leq Ct^{-2}|\phi|_{N-4}\leq C\bar{C}K_N\vep (s/t)^2s^{-3} + C(C_1\vep)^2(s/t)^3s^{-2}
$$
for $|I|+|J|\leq N-6$ with $C$ determined by $N$. Then \eqref{eq1-30-07-2020} is concluded in $\Hcal^{\text{ext}}_{[2,s_1]}$ and thus valid on $\Hcal_{[2,s_1]}$. 

Based on \eqref{eq2-20-08-2020} and \eqref{eq5-14-09-2020}, we also conclude the bound on $|u|_{N-4}$:
\begin{equation}\label{eq1-15-09-2020}
|u|_{N-4}\leq C_3\vep s^{-1} + C (C_1\vep)^2(s/t)
\end{equation}
with $C_3$ a constant determined by the system and $N$. Suppose that $C_1\geq C_3$, we obtain
\begin{equation}\label{eq2-14-09-2020}
|u|_{N-4}\leq CC_1\vep (s/t).
\end{equation}
 
%

\subsection{Proof of \eqref{eq2-30-07-2020}}
To do so we need to bound each term in right-hand-side of \eqref{eq12-30-07-2020}. Thanks to \eqref{eq1-31-07-2020}, the pure Klein-Gordon terms are bounded in a trivial manner. We only write the bound on $B_2$:
$$
|\del v \del v|_{N-6}\leq |v|_{N-5}^2\leq C(C_1\vep)^2(s/t)^4s^{-2}.
$$

The bound of mixed term $A_3$ is also based on \eqref{eq1-31-07-2020}. Recall \eqref{eq13-10-06-2020},
$$
\aligned
&|A_3^{\alpha\beta}\del_{\alpha}u\del_{\beta}v|_{N-6}
\\
\leq& C(s/t)^2|\del u|_{N-6}|\del v|_{N-6} + |\dels u|_{N-6}|\del v|_{N-6} +|\del u|_{N-6}|\dels v|_{N-6}
\\
\leq& C(C_1\vep)^2(s/t)^4s^{-2} + C(C_1\vep)^2(s/t)^3s^{-2} + C(C_1\vep)^2(s/t)^2s^{-3}
\\
\leq& C(C_1\vep)^2(s/t)^3s^{-2}.
\endaligned
$$

For the term $A_4$, by \eqref{eq1-31-07-2020} (the bound of order $N-6$)
$$
|A_4^{\alpha}v\del_{\alpha}u|_{N-6}\leq C|v|_{N-6}|\del u|_{N-6}\leq C(C_1\vep)^2(s/t)^4s^{-2} + C(C_1\vep)^3(s/t)s^{-3}.
$$

To bound $|uF_1|_{N-6}$, we need the following bound:
\begin{equation}\label{eq1-20-08-2020}
|\dels u|_{N-5}\leq CC_1\vep (s/t)^2s^{-1}.
\end{equation}
Compared with \eqref{eq5-06-05-2020}, the regularity is improved by two order. This is a direct result from \eqref{eq2-14-09-2020}. 

Equipped with \eqref{eq1-20-08-2020}, we make the following bound on $F_1$
\begin{equation}\label{eq3-20-08-2020}
|F_1|_{N-6}\leq C(C_1\vep)^2(s/t)^2s^{-2} + C(C_1\vep)s^{-3}.
\end{equation}
To prove this, recall the underlined bounds in \eqref{eq4-20-08-2020}, \eqref{eq5-20-08-2020} and \eqref{eq6-20-08-2020}. In fact we only need to improve the bound on $A_1$ for order $N-6$. Thanks to \eqref{eq1-20-08-2020},
$$
\aligned
|A_1^{\alpha\beta}\del_{\alpha}u\del_{\beta}u|_{N-6}
\leq& C(s/t)^2|\del u|_{N-6}^2 + C|\dels u|_{N-6}|\del u|_{N-6}
\\
\leq& C(C_1\vep)^2(s/t)^2s^{-2}.
\endaligned
$$
This concludes \eqref{eq3-20-08-2020}. Then thanks to \eqref{eq2-20-08-2020}
$$
|uF_1|_{N-6}\leq C|u|_{N-6}|F_1|_{N-6}\leq C(C_1\vep)^2(s/t)^3s^{-2} +  C(C_1\vep)^2(s/t)s^{-3}.
$$

Recall the definition of  $S^w[\del^IL^J\phi]$,  we conclude by \eqref{eq2-30-07-2020}.

\subsection{Conclusion of this section}
We recall Proposition \ref{prpo2 wave-sharp} and apply it on \eqref{eq1-21-08-2020}. Thanks to  \eqref{eq1-30-07-2020} and \eqref{eq2-30-07-2020}, for a point $(\bar{t},\bar{x})\in \Hcal_{[2,s]}$. 
$$
\aligned
|\bar{s}\del_t\del^IL^J\phi (\bar{t},\bar{x})|
\leq& 2\sqrt{2}\|\del_t \del^IL^J \phi\|_{L^{\infty}(\Hcal_2)} 
\\
&+ \sqrt{2}\Big|\int_{2}^{\bar{t}} W_{\bar{t},\bar{x}}[\del^IL^J \phi](t)e^{-\int_t^{\bar{t}}P_{\bar{t},\bar{x}}(\eta)d\eta}dt\Big|.
\endaligned
$$
Remark that $P(t,r)\geq \frac{1}{4}(s/t)^2t^{-1}$, then
$$
\aligned
0\leq& \int_{2}^{\bar{t}} W_{t,x}[\del^IL^J \phi](t)e^{-\int_t^{\bar{t}} P_{\bar{t},\bar{x}}(\eta)d\eta}dt
\\
\leq&  4\big(C_2\vep + C(C_1\vep)^2\big)\int_{2}^{\bar{t}} \frac{1}{4}(s/t)^2t^{-1}\Big|_{\gamma(t;\bar{t},\bar{x})} 
e^{-\frac{1}{4}\int_{t}^{\bar{t}} (s/t)^2t^{-1}\big|_{\gamma(\eta,\bar{t},\bar{x})} d\eta} d\tau
\\
&+ C(C_1\vep)^2\int_2^{\bar{t}}(s/t)s^{-2}\bigg|_{\gamma(t;\bar{t},\bar{x})}dt
\\
\leq& C(C_1\vep)^2 + 4C_2\vep + C(C_1\vep)^2\int_2^{\bar{t}}t^{-3/2}dt
\\
\leq& C(C_1\vep)^2 + 4C_2\vep.
\endaligned
$$
Thus
$$
|\bar{s}\del_t\del^IL^J \phi(t,x)|\leq C(C_1\vep)^2 + 4C_2\vep
$$
where we have applied that $|\del^IL^J (u^2)|\leq C(C_0\vep)^2\leq C(C_1\vep)^2$ on $\Hcal_2$. Then we obtain:
\begin{equation}\label{eq3-31-07-2020}
|s\del_t\del^IL^J u|\leq 4C_2\vep + C(C_1\vep)^2, \quad |I|+|J|\leq N-6.
\end{equation}

\section{Conclusion of the bootstrap argument}
\label{sec-conclusion}
We consider \eqref{eq9-15-06-2020}, \eqref{eq11-15-06-2020}, \eqref{eq8-29-07-2020} together with \eqref{eq3-31-07-2020}. We take $C_1$ sufficiently large with 
$$
C_1\geq C_3 + 8C_2 + 2C_0
$$ 
where $C_3$ is introduced in \eqref{eq1-15-09-2020}. Then taking
\begin{equation}\label{eq-last}
0\leq \vep\leq \frac{C_1 - 2C_0 - 8C_2}{2CC_1^2},
\end{equation}
where $C$ is a constant determined by $N$ and the system, such that \eqref{eq1-08-09-2020} and \eqref{eq3-08-09-2020} hold, 
the improved bounds \eqref{eq1'-26-04-2020}, \eqref{eq2'-26-04-2020} and \eqref{eq3'-02-06-2020} are guaranteed.

\section{A glance at Klein-Gordon-Zakharov model system} 
\label{sec-Zakharov}
We recall the Klein-Gordon-Zakharov model system \eqref{eq1-Zakharov}. 
In this section we will explain more on its Hessian structure. One may compare this structure with \eqref{eq-main}. To make it more clear, we consider the following general system
\begin{equation}\label{eq7-15-08-2020}
\aligned
&\Box u = v^2,
\\
&\Box v + v = vP^{\alpha\beta}\del_{\alpha}\del_{\beta}u
\endaligned
\end{equation}
with $P^{\alpha\beta}$ constants. Suppose that the initial data is imposed on $\Hcal_2$, compactly supported and sufficiently regular. We make the following bootstrap assumption on a time interval $[2,s_1]$:
\begin{equation}\label{eq8-15-08-2020}
\Ecal_0^N(s,\del_{\alpha} u)^{1/2} + \Ecal_0^N(s,u)^{1/2} + \Ecal_{0,c}^N(s,v)^{1/2}\leq C_1\vep s^{\delta},
\end{equation}
\begin{equation}\label{eq9-15-08-2020}
\Ecal_{0,c}^{N-1}(s,v)^{1/2}\leq C_1\vep.
\end{equation}
Here remark that we need to bound the energy on $\del u$, i.e., in our framework $u$ enjoys one more order of regularity than $v$. That is also why we demand one more regularity on the initial data of $u$ than that of $v$ in Theorem \ref{main2-thm}.

By Klainerman-Sobolev type inequality, 
\begin{equation}\label{eq10-15-08-2020}
s|\del u|_{N-2} + t|\dels u|_{N-2} \leq CC_1\vep s^{\delta},
\end{equation}
\begin{equation}\label{eq11-15-08-2020}
s|\del v|_{N-3} + t|\dels v|_{N-3} + t|v|_{N-3}\leq CC_1\vep.
\end{equation}
With \eqref{eq11-15-08-2020} and the following relation:
$$
\Box \del_{\alpha}u = 2v\del_{\alpha}v,
$$
one can easily establish the following improved energy bound on wave component:
\begin{equation}\label{eq18-15-08-2020}
\Ecal_0^N(s,\del_{\alpha} u)^{1/2} + \Ecal_0^N(s,u)^{1/2}\leq C_0\vep + C(C_1\vep)^2s^{\delta}
\end{equation}
where $C_0\vep$ measures the initial energies. 

The bounds on Klein-Gordon component depend on the Hessian structure $vP^{\alpha\beta}\del_{\alpha}\del_{\beta}u$.  By Proposition \ref{prop1-14-08-2020}, one can establish the following bounds on Hessian form:
\begin{equation}\label{eq12-15-08-2020}
\aligned
\|(s/t)^2s|\del\del u|_{N-1}\|_{L^2(\Hcal_s)}\leq& C\|(s/t)|\del u|_N\|_{L^2(\Hcal_s)} + C\|s|v^2|_{N-1}\|_{L^2(\Hcal_s)}
\\
\leq& CC_1\vep s^{\delta},
\endaligned
\end{equation}
\begin{equation}\label{eq13-15-08-2020}
(s/t)^2|\del\del u|_{N-3}\leq Ct^{-1}|\del u|_{N-2} + |v^2|_{N-3}\leq CC_1\vep (s/t)s^{-2+\delta}.
\end{equation}
Here we remark that the Hessian form enjoys better principle decay ($-2+\delta$ order) than the gradient ($-1+\delta$ order).
We also need to recover some conical decay on $v$. These are done by applying Proposition \ref{prop1-fast-kg}. We first remark that
$$
|v|_{N-4}\leq CC_1\vep (s/t)^2|\del v|_{N-3} + C|v\del\del u|_{N-4}\leq CC_1\vep (s/t)^2s^{-1} + CC_1\vep |v|_{N-4}.
$$
Taking $\vep$ sufficiently small such that $1-CC_1\vep \geq 1/2$, one obtains
\begin{equation}\label{eq16-15-08-2020}
|v|_{N-4}\leq CC_1\vep(s/t)^2s^{-1}.
\end{equation}
On the other hand, we remark that
$$
\aligned
&\|(s/t)^{-1}|v|_{N-1}\|_{L^2(\Hcal_s)}
\\
\leq& C\|(s/t)|\del v|_N\|_{L^2(\Hcal_s)} + C\|(s/t)^{-1}|v\del \del u|_{N-1}\|_{L^2(\Hcal_s)}
\\
\leq& CC_1\vep s^{\delta}
+ C\|(s/t)^{-1}|v|_{N-4}|\del\del u|_{N-1}\|_{L^2(\Hcal_s)} 
\\
&+ C\|(s/t)^{-1}|\del\del u|_{N-3}|v|_{N-1}\|_{L^2(\Hcal_s)}
\\
\leq& CC_1\vep s^{\delta} + CC_1\vep s^{-1}\|(s/t)|\del\del u|_{N-1}\|_{L^2(\Hcal_s)} + CC_1\vep\|(s/t)^{-1}|v|_{N-1}\|_{L^2(\Hcal_s)}
\endaligned
$$
where for the last inequality \eqref{eq16-15-08-2020} is applied. Now taking $\vep$ sufficiently small, the following bound is established:
\begin{equation}\label{eq17-15-08-2020}
\|(s/t)^{-1}|v|_{N-1}\|_{L^2(\Hcal_s)}\leq CC_1\vep s^{\delta}.
\end{equation}

Taking \eqref{eq12-15-08-2020}, \eqref{eq13-15-08-2020} together with \eqref{eq16-15-08-2020} and \eqref{eq17-15-08-2020}, the $L^2$ norm of source terms of Klein-Gordon equation is bounded as following:
$$
\aligned
&\||v\del\del u|_{N-1}\|_{L^2(\Hcal_s)}
\\
\leq& C\||v|_{N-4}|\del\del u|_{N-1}\|_{L^2(\Hcal_s)} + C\||v|_{N-1}|\del\del u|_{N-3}\|_{L^2(\Hcal_S)}
\\
\leq& CC_1\vep s^{-1}\|(s/t)^2|\del\del u|_{N-1}\|_{L^2(\Hcal_s)} + CC_1\vep s^{-2+\delta}\|(s/t)^{-1}|v|_{N-1}\|_{L^2(\Hcal_s)}
\\
\leq& C(C_1\vep)s^{-2+2\delta}.
\endaligned
$$
This is integrable with respect to $s$ (and remark that there is a margin). So we obtain, thanks to energy estimate,
\begin{equation}\label{eq19-15-08-2020}
\Ecal_{0,c}^{N-1}(s,v)^{1/2}\leq C_0\vep + C(C_1\vep)^2.
\end{equation}

The improvement on the leading order energy of $v$ is the most difficult. In \cite{Dong-2020-2} a type weighted energy estimate (called the ghost weight) on wave component is developed to overpass this. In our context due to Proposition \ref{prpo2 wave-sharp}, we can establish the following bound:
\begin{equation}\label{eq14-15-08-2020}
|\del_{\alpha}\del_{\beta}u|_{N-4}\leq CC_1\vep s^{-1}.
\end{equation}
Once this is done, we can establish the following bound on source terms:
$$
\aligned
\||v\del_{\alpha}\del_{\beta}u|_N\|_{L^2(\Hcal_s)}\leq& CC_1\vep\|s^{-1}|v|_N\|_{L^2(\Hcal_s)} 
+ CC_1\vep \|t^{-1}|\del_{\alpha}\del_{\beta} u|_N\|_{L^2(\Hcal_s)}
\\
\leq& C(C_1\vep)^2s^{\delta} + CC_1\vep s^{-1}\|(s/t)|\del(\del_{\alpha} u)|_N\|_{L^2(\Hcal_s)}
\\
\leq& C(C_1\vep)^2s^\delta.
\endaligned
$$
Then by energy estimate, we obtain
\begin{equation}\label{eq20-15-08-2020}
\Ecal_{0,c}^N(s,v)^{1/2}\leq C_0\vep  + C(C_1\vep)^2s^{\delta}.
\end{equation}

In order to establish \eqref{eq14-15-08-2020}, one only need to apply Proposition \ref{prpo2 wave-sharp}. Remark that for $|I|+|J|\leq N-4$,
$$
\aligned
|\Delta^w[\del^IL^J \del_{\alpha}u]|& \leq Cs|\dels\dels \del_{\alpha} u|_{N-4}
\leq Cst^{-2}|\del u|_{N-2}
\leq C(s/t)t^{-1}s^{-1+\delta}
\\
&\leq CC_1\vep t^{-3/2+\delta/2},
\endaligned
$$
$$
|S^w[\del^IL^J\del_{\alpha}u]|_{N-4}\leq Cs|v\del_{\alpha}v|_{N-4}\leq CC_1\vep (s/t)^2t^{-1}\leq CC_1\vep \frac{t-r}{t}t^{-1}
$$
where \eqref{eq4 notation}  and \eqref{eq16-15-08-2020} are applied on $|\dels\dels \del u|_{N-4}$ and $|v|_{N-4}$ respectively. Remark that, $S^w$ can be bounded by $P(t,r)$. One may compare this with \eqref{eq1-30-07-2020}. Then apply Proposition \ref{prpo2 wave-sharp} on 
$$
\Box \del^IL^J \del_{\alpha}u = 2\del^IL^J (v\del_{\alpha} v)
$$
and substitute these bounds in to the right-hand-side of \eqref{eq1-29-05-2020}, \eqref{eq14-15-08-2020} is established.

Now in \eqref{eq18-15-08-2020}, \eqref{eq19-15-08-2020} and \eqref{eq20-15-08-2020} we take $\vep\leq \frac{C_1-2C_0}{2CC_1}$, the bootstrap assumptions are improved and this concludes the bootstrap argument.

\appendix

\section{Proof of Lemma \ref{prop1-30-07-2020}}
This is by the following observation. In $\RR^{2+1}$, let
$$
\xi = (1,1,0)^{T},\quad,\bar{\xi} = (1,-1,0)^{T},\quad \eta = (1,0,1)^{T}, \quad \bar{\eta} = (1,0,-1)^T. 
$$
Then $\xi,\eta$ are null vectors. Then 
$$
A(\xi,\xi) = A(\bar{\xi},\bar{\xi}) = 0.
$$
This leads to
$$
A^{00} + A^{11} + 2A^{01} = A^{00} + A^{11} - 2A^{01}\Rightarrow A^{01} = A^{10} = 0.
$$
In the same manner with $\eta$ and $\bar{\eta}$ we observe that $A^{20} = A^{02} = 0$. On the other hand,
$A(\xi,\xi) = 0$ leads to
$$
A^{00} + A^{11} = 0\Rightarrow A^{11} = -A^{00}.
$$
In the same manner, $A^{22} = -A^{00}$. 

Finally, in order to fix $A^{12}$, we consider the null vector $\theta = (\sqrt{2},1,1)$. Remark that $A^{10} = A^{20} = 0$
$$
0 = A(\theta,\theta) = 2A^{00} + A^{11} + A^{22} + 2A^{12}\Rightarrow A^{12} = 0.
$$ 
So we conclude that
$$
A^{\alpha\beta} = A^{00}m^{\alpha\beta}. 
$$

\section{Basic notation and calculus within hyperboloidal foliation}\label{sec-appendix-C}
\subsection{Sketch on the proof of \eqref{eq11-10-06-2020}}
In this subsection all constants are determined by the order of the derivatives except otherwise specified.

The first bound is based on the following decomposition of commutator in $\Kcal$:
\begin{equation}\label{eq14-10-06-2020}
[L^J,\del^I] 
= \sum_{|I'| = |I|\atop |J'|<|J|}\Gamma^{JI}_{I'J'}\del^{I'}L^{J'}
\end{equation}
with $\Gamma^{IJ}_{I'J'}$ constants. This can be easily proved by induction on $|I|$ and $|J|$. 

Then let $Z^K$ be of type $(p-k,k,0)$, i.e., it contains at most $(p-k)$ partial derivatives and $k$ boosts. Then it can be written as following:
$$
Z^K = \del^{I_1}L^{J_1}\cdots \del^{I_r}L^{J_r}
$$
where $I_1$ and $L_r$ may be empty indices. By commuting $\del^{I_k}$ with $L^{J_{k-1}}$, we arrive at the following decomposition:
\begin{equation}\label{eq15-10-06-2020}
Z^K = \sum_{|I| \leq p-k\atop  |J|\leq k}\Gamma^K_{IJ}\del^IL^J 
\end{equation}
with $\Gamma^K_{IJ}$ constants.

Finally, let us consider $Z^K\del_{\alpha}u$. By the above decomposition, it is a finite linear combination of the terms $\del^IL^J\del_{\alpha}u$ with $|I|\leq p-k, |J|\leq k$. Then by \eqref{eq14-10-06-2020} with $I=(\alpha)$ we commute $L^J$ and $\alpha$, then the first bound of \eqref{eq11-10-06-2020} is established.

For the second bound, we need to recall the following property on the function $(s/t) = \frac{\sqrt{t^2-r^2}}{t}$. We can prove (see for example in \cite{LM1}) that 
\begin{equation}\label{eq16-10-06-2020}
\big|\del^IL^J(s/t)\big|\leq \left\{
\aligned
&C(s/t), \quad &&|I|=0,
\\
&Cs^{-1}\leq C(s/t),\quad &&|I|>0.
\endaligned
\right.
\end{equation}
This is also proved by induction on $|I|$ and $|J|$. Then apply \eqref{eq15-10-06-2020} on $Z^K\big((s/t)\del_{\alpha} u\big)$
$$
Z^K((s/t)\del_{\alpha}u) = \sum_{K_1+K_2=K}Z^{K_1}(s/t)Z^{K_2}u.
$$
The first factor is bounded by $(s/t)$. The second factor is bounded by the first bound of \eqref{eq11-10-06-2020}. Then we conclude by the second bound of \eqref{eq11-10-06-2020}.
\subsection {Sketch on the proof of \eqref{eq3 notation}, \eqref{eq4 notation} and \eqref{eq1 lem5 notation}}
Remark that by \eqref{eq15-10-06-2020}, we only need to bound $\del^IL^J\delu_a u$. Then remark that
$$
\del^IL^J \delu_a u = \del^IL^J(t^{-1}L_au) = \sum_{I_1+I_2=I\atop J_1+J_2=J}\del^{I_1}L^{J_1}(t^{-1})\del^{I_2}L^{J_2}L_au.
$$
The first factor is bounded by $Ct^{-1}$ in $\Kcal$ (can be observed by homogeneity or induction). For the second factor, when $|I_2| = 0$, remark that 
$$
t^{-1}L^{J_2}L_au = t^{-1}L_{a'}L^{J'}u = \delu_{a'}L^{J'}u,\quad |J|\geq |J_2| = |J_2'| \geq 0.
$$
When $|I_2|\geq 1$,  we write $L^{J_2}L_a = L^{J_2'} = L_bL^{J_2''}, |J_2''| = |J_2|\geq 0$. Then 
$$
\del^{I_2}L^{J_2}L_au = \del^{I_2}L_bL^{J_2''} = L_b\del^{I_2}L^{J_2''}u + [\del^{I_2},L_b]L^{J_2''}u
$$
Then
$$
|\del^{I_1}L^{J_1}(t^{-1})\del^{I_2}L^{J_2}L_au|\leq C| t^{-1}L_b\del^{I_2}L^{J_2''}u| + | [\del^{I_2},L_b]L^{J_2''}u|.
$$
Then by the relation $t^{-1}L_b = \delu_b$ and \eqref{eq14-10-06-2020} applied on $[\del^{I_2},L_b]$, \eqref{eq3 notation} is established.

\eqref{eq4 notation} is by applying twice the above argument. 

For \eqref{eq1 lem5 notation}, we only need to remark that
$$
\del_{\alpha}\delu_au = t^{-1}\del_{\alpha}L_au - t^{-1}\delta_{\alpha}^0\delu_au,
$$
then by homogeneity, we conclude the desired result.

\subsection{Sketch on \eqref{eq13-10-06-2020}}
\label{subsec-null}
We need to recall the following result on null form. Let $A^{\alpha\beta}$ be a constant coefficient quadratic form satisfying the null condition. Then
\begin{equation}\label{eq1-11-06-2020}
|\Au^{00}|_{p,k}\leq C(s/t)^2.
\end{equation}
This is proved in \cite{LM1}.
In fact one only need to check $\del^IL^J \Au^{00}$. Remark that 
$$
\Au^{00} = A^{\alpha\beta}\Psiu_{\alpha}^0\Psiu_{\beta}^0
$$
and
$$
\big(\Psiu_{\alpha}^0\big)_{\alpha=0,1,2} = (1,-x^1/t,-x^2/t)^T = \big(1-(r/t), 0,0\big)^T + \eta
$$
where $\eta = \big((r/t), -x^1/t,-x^2/t\big)^T$ is null. Denote by $\xi = (1,-x^1/t,-x^2/t)^T$, then
$$
\aligned
\Au^{00} =& A(\xi+\eta,\xi+\eta) = A(\xi,\xi) + A(\eta,\xi) + A(\xi,\eta) + A(\eta,\eta) 
\\
=& A(\xi,\xi) + A(\eta,\xi) + A(\xi,\eta) 
\endaligned
$$
where for the last equality the null condition of $A$ is applied on the last term. Then we check the rest terms. Remark that
$$
A(\xi,\xi) = \Big(\frac{t-r}{t}\Big)^2A^{00},\quad A(\xi,\eta) = \frac{t-r}{t}\Big(A^{00} - \frac{x^a}{t}A^{a0}\Big).
$$
The factor $\frac{t-r}{t}$ supplies the conical decay $(s/t)^2 = \frac{(t+r)(t-r)}{t^2}$. This shows the bound \eqref{eq1-11-06-2020} at zero order

For  higher order, remark that this bound is non-trivial only when $r\geq t/2$ and one can check this directly by induction on $|I|,|J|$. 

Then remark that
$$
A^{\alpha\beta}\del_{\alpha}u\del_{\beta}u = \Au^{00}\del_tu\del_tu + 2\Au^{a0}\delu_au\del_tu + \Au^{ab}\delu_au\delu_b u.
$$
Then differentiate the above identity with respect to $\del^IL^J$, and remark that $|\Au^{\alpha\beta}|_{p,k}\leq C|A|$ because these components are homogeneous functions. Then apply \eqref{eq1-11-06-2020}, the desired bound is established. 

\bibliographystyle{elsarticle-num}
\bibliography{WKGm-bibtex}





%
%
%
\end{document}